\newcommand\bA{{\mathbb A}}
\newcommand\bF{{\mathbb F}}
\newcommand\bG{{\mathbb G}}
\newcommand\bP{{\mathbb P}}
\newcommand\bZ{{\mathbb Z}}
\newcommand\cG{{\mathcal G}}
\newcommand\cM{{\mathcal M}}
\newcommand\cO{{\mathcal O}}
\newcommand\fm{\mathfrak{m}}
\newcommand\aff{{\rm aff}}
\newcommand\ant{{\rm ant}}
\newcommand\charc{{\rm char}}
\newcommand\diag{{\rm diag}}
\newcommand\id{{\rm id}}
\newcommand\kernel{{\rm ker}}
\newcommand\op{{\rm op}}
\newcommand\rat{{\rm rat}}
\newcommand\red{{\rm red}}
\newcommand\Aut{{\rm Aut}}
\newcommand\CL{{\rm CL}}
\newcommand\End{{\rm End}}
\newcommand\Hilb{{\rm Hilb}}
\newcommand\Hom{{\rm Hom}}
\newcommand\Ima{{\rm Im}}
\newcommand\Kern{{\rm Ker}}
\newcommand\Lie{{\rm Lie}}
\newcommand\SL{{\rm SL}}
\newcommand\Spec{{\rm Spec}}
\begin{document}

\title*{On Algebraic Semigroups and Monoids}

\author{Michel Brion}
\institute{Michel Brion 
\at Institut Fourier, Universit\'e de Grenoble, France\\
\email{Michel.Brion@ujf-grenoble.fr}}

\maketitle
 
\setcounter{minitocdepth}{2}

\dominitoc

\abstract{We present some fundamental results on (possibly nonlinear)
algebraic semigroups and monoids. These include a version of 
Chevalley's structure theorem for irreducible algebraic monoids, 
and the description of all algebraic semigroup structures on curves 
and complete varieties.
\keywords{algebraic semigroup, algebraic monoid, algebraic group.\\
MSC classes: 14L10, 14L30, 20M32, 20M99.}}

\section{Introduction}
\label{sec:intro}

Algebraic semigroups are defined in very simple terms: they are
algebraic varieties endowed with a 
composition law which is associative and a morphism of varieties. 
So far, their study has focused on the class of linear algebraic 
semigroups, that is, of closed subvarieties of the space of 
$n \times n$ matrices that are stable under matrix multiplication; 
note that for an algebraic semigroup, being linear is equivalent
to being affine. The theory has been especially developed by 
Putcha and Renner for linear algebraic monoids, i.e., those having 
a neutral element (see the books \cite{Putcha88, Renner05}).

\smallskip

In addition, there has been recent progress on the structure of 
(possibly nonlinear) algebraic monoids: by work of Rittatore, 
the invertible elements of any irreducible algebraic 
monoid $M$ form an algebraic group $G(M)$, open in $M$ 
(see \cite[Thm.~1]{Rittatore98}). Moreover, $M$ is linear 
if and only if so is $G(M)$ (see \cite[Thm.~5]{Rittatore07}). 
Also, the structure of normal irreducible algebraic monoids 
reduces to the linear case, as shown by Rittatore and 
the author: any such monoid is a homogeneous fiber bundle 
over an abelian variety, with fiber a normal irreducible linear 
algebraic monoid (see \cite[Thm.~4.1]{Brion-Rittatore}, and 
\cite{Renner-Rittatore} for further developments). 
This was extended by the author to all irreducible monoids 
in characteristic $0$ (see \cite[Thm.~3.2.1]{Brion08}).

\smallskip

In this article, we obtain some fundamental results on 
algebraic semigroups and monoids, that include the above 
structure theorems in slightly more general versions. 
We also describe all algebraic semigroup structures 
on abelian varieties, irreducible curves and complete 
irreducible varieties. The latter result is motivated by 
a remarkable theorem of Mumford and Ramanujam: 
if a complete irreducible variety $X$ has a (possibly 
nonassociative) composition law $\mu$ with a neutral element, 
then $X$ is an abelian variety with group law $\mu$ (see 
\cite[Chap.~II, \S 4, Appendix]{Mumford}).

\smallskip

As in \cite{Putcha88, Renner05}, we work over an 
algebraically closed field of arbitrary characteristic
(most of our results extend to a perfect field without much
difficulty; this is carried out in Subsection \ref{subsec:asmopf}).
But we have to resort to somewhat more advanced methods 
of algebraic geometry, as the varieties under consideration 
are not necessarily affine. For example, to show that
every algebraic semigroup has an idempotent, we use an
argument of reduction to a finite field, while the 
corresponding statement for affine algebraic semigroups 
follows from linear algebra. Also, we occasionally use
some semigroup and monoid schemes (these are briefly 
considered in \cite[Chap.~II]{Demazure-Gabriel}), but we 
did not endeavour to study them systematically.

\smallskip

This text is organized as follows. 
Section \ref{sec:asm} presents general results on 
idempotents of algebraic semigroups and on invertible 
elements of algebraic monoids. Both topics are 
fairly interwowen: for example, the fact that every 
algebraic monoid having no nontrivial idempotent is a 
group (whose proof is again more involved than in the
linear case) implies a version of the Rees structure
theorem for simple algebraic semigroups. 
In Section \ref{sec:iam},
we show that the Albanese morphism of an irreducible 
algebraic monoid $M$ is a homogeneous fibration with fiber 
an affine monoid scheme. This generalization of 
the main result of \cite{Brion-Rittatore} is obtained 
via a new approach, based on the consideration of the 
universal homomorphism from $M$ to an algebraic 
group. In Section \ref{sec:asscv}, we describe 
all semigroup structures on certain classes of 
varieties. We begin with the easy case of abelian 
varieties; as an unexpected consequence, we show that 
all the maximal submonoids of a given irreducible 
algebraic semigroup have the same Albanese variety. 
Then we show that every irreducible semigroup of 
dimension $1$ is either an algebraic group or 
an affine monomial curve; this generalizes a result
of Putcha in the affine case (see 
\cite[Thm.~2.13]{Putcha80} and \cite[Thm.~2.9]{Putcha81}). 
We also describe all complete irreducible semigroups,
via another variant of the Rees structure theorem.
Next, we obtain two general rigidity results; one of them implies 
(in loose words) that the automorphisms of a complete variety 
are open and closed in the endomorphisms. This has applications 
to complete algebraic semigroups, and yields another approach 
to the above theorem of Mumford and Ramanujam. Finally, 
we determine all families of semigroup laws on a given 
complete irreducible variety.

\smallskip

This article makes only the first steps in the study
of (possibly nonlinear) algebraic semigroups and monoids,
which presents many open questions. From the viewpoint of
algebraic geometry, it is an attractive problem to describe
all algebraic semigroup structures on a given variety.
Our classes of examples suggest that the associativity 
condition imposes strong restrictions which might make
this problem tractable: for instance, the composition 
laws on the affine line are of course all the
polynomial functions in two variables, but those that
are associative are obtained from the maps
$(x,y) \mapsto 0$, $x$, $y$, $x + y$ or $xy$ by a change
of coordinate. From the viewpoint
of semigroup theory, it is natural to investigate the
structure of an algebraic semigroup in terms of its
idempotents and the associated (algebraic) subgroups. 
Here a recent result of Renner and the author
(see \cite{Brion-Renner}) asserting that every algebraic 
semigroup $S$ is strongly $\pi$-regular (i.e., for any 
$x \in S$, some power $x^m$ belongs to a subgroup) opens 
the door to further developments.

\bigskip 

\noindent
{\bf Notation and conventions.} 
Throughout this article, we fix an algebraically closed field $k$. 
A \emph{variety} is a reduced, separated scheme of finite type 
over $k$; in particular, varieties need not be irreducible. 
By a \emph{point} of a variety $X$, we mean a closed 
(or equivalently, $k$-rational) point; we may identify $X$ to 
its set of points equipped with the Zariski topology and with 
the structure sheaf $\cO_X$. \emph{Morphisms} of varieties are
understood to be $k$-morphisms.

The textbook \cite{Hartshorne} will be our standard reference for 
algebraic geometry, and \cite{Eisenbud} for commutative algebra.
We will also use the books \cite{Springer} and \cite{Mumford}
for some basic results on linear algebraic groups, resp. 
abelian varieties.

\section{Algebraic semigroups and monoids}
\label{sec:asm}

\subsection{Basic definitions and examples}
\label{subsec:asg}

\begin{definition}\label{def:mon}
An (abstract) \emph{semigroup} is a set $S$ equipped with an
associative composition law $\mu : S \times S \to S$.
When $S$ is a variety and $\mu$ is a morphism, we say that 
$(S,\mu)$ is an \emph{algebraic semigroup}.

A \emph{neutral (resp. zero) element} of a semigroup 
$(S,\mu)$ is an element $x_o \in S$ such that 
$\mu(x, x_o) = \mu(x_o, x) = x$ for all $x \in S$
(resp. $\mu(x,x_o) = \mu(x_o,x) = x_o$ for all $x \in S$).

An abstract (resp. algebraic) semigroup $(S,\mu)$ equipped with 
a neutral element $x_o$ is called an abstract (resp. algebraic) 
\emph{monoid}.

An \emph{algebraic group} is a group $G$ equipped with the structure 
of a variety, such that the group law $\mu$ and the inverse map
$\iota : G \to G$, $g \mapsto g^{-1}$ are morphisms. 
\end{definition}

Clearly, a neutral element $x_o$ of a semigroup $S$ is unique 
if it exists; we then denote $x_o$ by $1_S$, or just by $1$ 
if this yields no confusion. Likewise, a zero element is unique 
if it exists, and we then denote it by $0_S$ or $0$. Also, 
we simply denote the semigroup law $\mu$ by $(x,y) \mapsto xy$. 

\begin{definition}
A \emph{left ideal} of a semigroup $(S,\mu)$ is a subset $I$ of $S$ 
such that $xy \in I$ for any $x \in S$ and $y \in I$. 
\emph{Right ideals} are defined similarly; a \emph{two-sided ideal}
is of course a left and right ideal.
\end{definition}

\begin{definition} \label{def:hom}
Given two semigroups $S$ and $S'$, 
a \emph{homomorphism of semigroups} is a map $\varphi : S \to S'$ 
such that $\varphi(xy) = \varphi(x) \varphi(y)$ for all $x,y \in S$. 
When $S$ and $S'$ are monoids, we say that $\varphi$ is a 
\emph{homomorphism of monoids} if in addition $\varphi(1_S) = 1_{S'}$.

A \emph{homomorphism of algebraic semigroups} is a homomorphism
of semigroups which is also a morphism of varieties. Homomorphisms 
of algebraic monoids, resp. of algebraic groups, are defined similarly.
\end{definition}

\begin{definition}\label{def:idem}
An \emph{idempotent} of a semigroup $S$ is an element $e \in S$
such that $e^2 = e$. We denote by $E(S)$ the set of idempotents.
\end{definition}

Idempotents yield much insight in the structure of semigroups; 
this is illustrated by the following:

\begin{remark}\label{rem:idem}
(i) Let $\varphi : S \to S'$ be a homomorphism of semigroups. 
Then $\varphi$ sends $E(S)$ to $E(S')$; moreover, the fiber 
of $\varphi$ at an arbitrary point $x' \in S'$ is a subsemigroup 
of $S$ if and only if $x' \in E(S')$. 

(ii) Let $S$ be a semigroup, and $M \subseteq S$ a submonoid with
neutral element $e$. Then $M$ is contained in the subset
\[ \{ x \in S ~\vert~ e x = x e = x \} = 
\{ exe ~\vert~ x \in S \} =: eSe, \]
which is the largest submonoid of $S$ with neutral element $e$.
This defines a bijective correspondence between idempotents and
maximal submonoids of~$S$.

(iii) Let $S$ be a semigroup, and $e \in E(S)$. Then the subset
\[ Se  := \{ xe ~\vert~ x \in S \} = \{ x \in S ~\vert~ xe = x \} \]
is a left ideal of $S$, and the map
\[ \varphi: S \longrightarrow Se, \quad x \longmapsto xe \]
is a \emph{retraction} (i.e., $\varphi(x) = x$ for all $x \in Se$). 
The fiber of $\varphi$ at $e$,
\[ S_e := \{ x \in S ~\vert~ xe = e \}, \]
is a subsemigroup of $S$. Moreover, the restriction
\[ \psi := \varphi \vert_{eS} : eS \longrightarrow eS \cap Se = eSe \] 
is a \emph{retraction of semigroups}, that is,
$\psi(x) = x$ for all $x \in eSe$ and $\psi$ is a homomorphism
(indeed, $x e y e = xy e$ for all $x \in S$ and $y \in eS$).
The fiber of $\psi$ at $e$,
\[ eS_e := \{ x \in S ~\vert~ ex = x ~{\rm and}~ xe = e \}, \]
is a subsemigroup of $S$ with law $(x,y) \mapsto y$
(since $x y = x e y = ey = y$ for all $x,y \in e S_e$).

When $S$ is an algebraic semigroup, $E(S)$ is a closed subvariety.
Moreover, $Se$, $S_e$, $eSe$ and $eS_e$ are closed in $S$ as well, 
and $\varphi$ (resp. $\psi$) is a retraction of varieties
(resp. of algebraic semigroups). In particular, every maximal
abstract submonoid of $S$ is closed. 

Similar assertions hold for the right ideal $eS$ 
and the subsemigroups
\[ {_e{}S} := \{ x \in S ~\vert~ ex = e \}, \quad
{_e{}Se} := \{ x \in S ~\vert~ xe = x ~{\rm and}~ ex = e\}. \]
\end{remark}

An abstract semigroup may have no idempotent; for example, the set
of positive integers equipped with the addition. Yet we have:

\begin{proposition}\label{prop:idem}
Any algebraic semigroup has an idempotent.
\end{proposition}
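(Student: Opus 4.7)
The plan is a spreading-out argument: reduce $S$ to a semigroup over a finite residue field, where existence of an idempotent is elementary, and use the Jacobson property of finitely generated $\bZ$-algebras to transport the result back to $k$.

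Pick a closed point $x_0 \in S$. By standard spreading-out (EGA IV), there exist a finitely generated $\bZ$-subalgebra $R \subset k$, a scheme $S_R$ of finite type over $R$ with an associative morphism $\mu_R : S_R \times_R S_R \to S_R$, and a section $\sigma : \Spec R \to S_R$, such that base change via $R \hookrightarrow k$ recovers $(S, \mu, x_0)$. The idempotents form a closed subscheme $E(S_R) \subset S_R$ cut out by $\mu_R(y,y) = y$, and proving the proposition amounts to showing that $E(S_R) \otimes_R k$ is nonempty.

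For each maximal ideal $\fm \subset R$, the residue field $k_0 = R/\fm$ is finite by the Nullstellensatz for finitely generated $\bZ$-algebras; hence the fiber $S_0 := S_R \otimes_R k_0$ is of finite type over a finite field, and $S_0(k_0)$ is finite and inherits from $\mu_R$ the structure of an abstract semigroup. The reduction $\bar{x}_0 := \sigma \bmod \fm$ lies in $S_0(k_0)$, and its cyclic subsemigroup $\{\bar{x}_0, \bar{x}_0^2, \ldots\} \subset S_0(k_0)$ is eventually periodic by the pigeonhole principle, so a suitable power $\bar{x}_0^N$ is idempotent. Therefore $E(S_R) \otimes_R k_0$ is nonempty for every closed point of $\Spec R$.

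To finish, the image of $E(S_R) \to \Spec R$ is constructible by Chevalley's theorem and contains every closed point by the preceding paragraph. Since $R$ is Jacobson and a domain, any constructible subset of $\Spec R$ containing all closed points must also contain the generic point: otherwise its complement would be a nonempty constructible subset containing the generic point, hence would contain a nonempty open subset, which in turn would contain closed points. It follows that $E(S_R)$ has a point defined over some algebraic extension of $\mathrm{Frac}(R)$, which embeds into the algebraically closed field $k$, yielding an idempotent of $S$. The main obstacle is conceptual: recognizing that the elementary pigeonhole argument available in finite semigroups can be imported to the algebro-geometric setting via spreading out, thereby avoiding any direct dynamical analysis on $S$ itself.
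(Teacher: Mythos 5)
Your proposal is correct and follows essentially the same route as the paper: spread out over a finitely generated subring $R$ of $k$, produce an idempotent in each finite residue-field fiber via the standard power/pigeonhole argument, and then use constructibility of the image of $E(S_R) \to \Spec(R)$ together with the Jacobson property to capture the generic point and descend to $k$. The only cosmetic difference is that the paper first treats the case $k = \overline{\bF}_p$ and writes out the explicit computation showing $x^{b(a-b)}$ is idempotent when $x^a = x^b$, whereas you invoke this as a standard fact about finite semigroups.
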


\begin{proof}
\smartqed
We use a classical argument of reduction to a finite field.
Consider first the case where $k$ is the algebraic closure of 
a prime field $\mathbb{F}_p$. Then $S$ and $\mu$ are defined over some 
finite subfield $\bF_q$ of $k$, where $q$ is a power of $p$. Thus, 
for any $x \in S$, the powers $x^n$, where $n$ runs 
over the positive integers, form a finite subsemigroup of $S$. 
We claim that some $x^n$ is idempotent. Indeed, we have 
$x^a = x^b$ for some integers $a > b > 0$. Thus, 
$x^b = x^b x^{a - b} = x^{b + m(a-b)}$ 
for all $m > 0$. In particular,  $x^b = x^{b(a-b+1)}$. 
Multiplying by $x^{b(a - b - 1)}$, we obtain
$x^{b(a - b)} = x^{2b(a - b)}$, i.e., $x^{b(a-b)}$ is idempotent.

Next, consider the case of an arbitrary field $k$. Choose 
$x \in S$; then $S$, $\mu$ and $x$ are defined over some 
finitely generated subring $R$ of $k$. In the language of 
schemes, we have a scheme $S_R$ together with morphisms
\[ \varphi : S_R \to \Spec(R), \quad 
\mu_R : S_R \times_{\Spec(R)} S_R \to S_R \]
and with a section $x_R : \Spec(R) \to S_R$ of $\varphi$, 
such that the $k$-scheme $S_R \times_{\Spec(R)} \Spec(k)$ 
is isomorphic to $S$; moreover, this isomorphism identifies 
$\mu_R \times_{\Spec(R)} \Spec(k)$ to $\mu$, and 
$x_R \times_{\Spec(R)} \Spec(k) $ to $x$. Also, $S_R$ is 
a semigroup scheme over $\Spec(R)$ (that is, $\mu_R$ is 
associative in an obvious sense), and the morphism 
$\varphi$ is of finite type. Denote by $E(S_R)$ 
the subscheme of idempotents of $S_R$, i.e., $E(S_R)$ 
is the preimage of the diagonal in $S_R \times S_R$ 
under the morphism
$S_R \to S_R \times S_R$, $s \mapsto (\mu_R(s,s),s)$. 
Then $E(S_R)$ is a closed subscheme of $S_R$; let 
\[ \psi: E(S_R) \to \Spec(R) \]
be the restriction of $\varphi$. 

We claim that the image of $\psi$ contains all closed
points of $\Spec(R)$. Indeed, consider a maximal ideal $\fm$ of 
$R$; then $R/\fm$ is a finite field. By the first step, the semigroup
$S_R \times_{\Spec(R)} \Spec(R/\fm)$ (obtained from $S_R$ 
by reduction mod $\fm$) contains an idempotent; this yields the claim.

Since $R$ is noetherian and the morphism $\psi$ is of finite type, 
its image is constructible (see e.g. \cite[Exer.~II.3.19]{Hartshorne}). 
In view of the claim, it follows that this image contains the generic 
point of $\Spec(R)$ (see e.g. [loc.~cit., Exer.~II.3.18]), i.e., $E(S_R)$ 
has (not necessarily closed) points over the fraction field of $R$. 
Hence $E(S)$ has (closed) points over the larger algebraically 
closed field $k$.
\qed
\end{proof}
 
Combining the above proposition with Remark \ref{rem:idem} 
(i), we obtain:

\begin{corollary}\label{cor:idem}
Let $f : S \to S'$ be a surjective homomorphism of algebraic 
semigroups. Then $f(E(S)) = E(S')$.
\end{corollary}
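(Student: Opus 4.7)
The inclusion $f(E(S)) \subseteq E(S')$ is immediate: if $e \in E(S)$ then $f(e)^2 = f(e^2) = f(e)$, which is exactly the content of the first sentence of Remark \ref{rem:idem}(i). So the whole content of the corollary is the reverse inclusion $E(S') \subseteq f(E(S))$.

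For that, my plan is to pick $e' \in E(S')$ and produce an idempotent in the fiber $f^{-1}(e')$. By surjectivity this fiber is nonempty, and since $f$ is a morphism of varieties the fiber is a closed subvariety of $S$, hence itself a variety of finite type over $k$. The key structural input is the second half of Remark \ref{rem:idem}(i): because $e'$ is idempotent in $S'$, the fiber $f^{-1}(e')$ is closed under the composition law of $S$, and so inherits the structure of an algebraic semigroup.

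Now I can simply apply Proposition \ref{prop:idem} to this algebraic semigroup $f^{-1}(e')$ to obtain some $e \in f^{-1}(e')$ with $e^2 = e$. This $e$ lies in $E(S)$ and satisfies $f(e) = e'$, which proves $E(S') \subseteq f(E(S))$ and completes the argument.

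There is no real obstacle once one assembles these pieces; the only thing to check is that ``subsemigroup'' in Remark \ref{rem:idem}(i) indeed means a \emph{closed} subvariety stable under $\mu$, so that Proposition \ref{prop:idem} applies to it. This is clear because $f^{-1}(e') \subseteq S$ is cut out by the closed condition $f(x)=e'$, and the proposition is stated for arbitrary algebraic semigroups, not just irreducible or connected ones.
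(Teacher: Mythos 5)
Your argument is correct and is exactly the paper's intended proof: the paper derives the corollary by combining Proposition \ref{prop:idem} with Remark \ref{rem:idem}(i), i.e., the fiber over an idempotent of $S'$ is a nonempty closed subsemigroup, hence an algebraic semigroup containing an idempotent. No differences to report.
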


We now present several classes of (algebraic) semigroups:

\begin{example}\label{ex:semi}
(i) Any set $X$ has two semigroup laws $\mu_l$, $\mu_r$ given by 
$\mu_l(x,y) := x$ (resp. $\mu_r(x,y) := y$) for all $x,y \in X$. 
For both laws, every element is idempotent and $X$ has no 
proper two-sided ideal.

Also, every point $x \in X$ defines a semigroup law $\mu_x$ by 
$\mu_x(y,z) := x$ for all $y,z \in X$. 
Then $x$ is the zero element; it is the unique idempotent, 
and the unique proper two-sided ideal as well. 

The maps $\mu_l$, $\mu_r$, $\mu_x$ ($x \in X$) will be called
the \emph{trivial semigroup laws} on $X$. When $X$ is a variety,
these maps are algebraic semigroup laws. Note that every morphism
of varieties $f: X \to Y$ yields a homomorphism of algebraic semigroups
$(X,\mu_r) \to (Y,\mu_r)$, and likewise for $\mu_l$. Also,
$f$ yields a homomorphism $(X,\mu_x) \to (Y,\mu_y)$, where 
$y := f(x)$.

\smallskip

(ii) Let $X$ be a set, $Y \subseteq X$ a subset, $\rho: X \to Y$ 
a retraction, and $\nu$ a semigroup law on $Y$. Then the map 
\[ \mu : X \times X \longrightarrow X, \quad 
(x_1,x_2) \longmapsto \nu(\rho(x_1), \rho(x_2)) \]
is easily seen to be a semigroup law on $X$. Moreover,
$\rho$ is a retraction of semigroups, and $E(X) = E(Y)$. 
If in addition $X$ is a variety, $Y$ is a closed subvariety
and $\rho$, $\nu$ are morphisms, then $(X,\mu)$ is an 
algebraic semigroup.

When $Y$ consists of a single point $x$, we recover the 
semigroup law $\mu_x$ of the preceding example. 

\smallskip

(iii) Given two semigroups $(S,\mu)$ and $(S',\mu')$, 
we may define a composition law $\nu$ on the disjoint union 
$S \sqcup S'$ by
\[ \nu(x,y) = 
\cases{
\mu(x,y)  & if $x,y \in S$, \cr
y  & if $x \in S$ and $y \in S'$, \cr
x  & if $x \in S'$ and $y \in S$, \cr
\mu'(x,y)  & if $x,y \in S'$.\cr} 
\]
One readily checks that $(S \sqcup S',\nu)$ is a semigroup; 
moreover, $(S,\mu)$ is a subsemigroup and $(S',\mu')$ is 
a two-sided ideal. Also, note that $E(S \sqcup S') = E(S) \sqcup E(S')$.

When $S$ (resp. $S'$) has a zero element $0_S$ (resp. $0_{S'}$),
consider the set $S \cup_0 S'$ obtained from $S \sqcup S'$ by
identifying $0_S$ and $0_{S'}$. One checks that $S \cup_0 S'$
has a unique semigroup law $\nu_0$ such that the natural
map $S \sqcup S' \to S \cup_0 S'$ is a homomorphism; moreover,
the image of $0_S$ is the zero element. Here again, $S$ is a 
subsemigroup of $S \cup_0 S'$, and $S'$ is a two-sided ideal; 
we have $E(S \cup_0 S') = E(S) \cup_0 E(S')$.

If in addition $(S,\mu)$ and $(S',\mu')$ are algebraic semigroups,
then so are $(S \sqcup S',\nu)$ and $(S \cup_0 S', \nu_0)$.
This construction still makes sense when (say) $S'$ is a scheme 
of finite type over $k$, equipped with a closed point $0 = 0_{S'}$
and with the associated trivial semigroup law $\mu_0$. Taking for 
$S'$ the spectrum of a local ring of finite dimension as 
a $k$-vector space, and for $0$ the unique closed point of $S'$, 
we obtain many examples of nonreduced semigroup schemes 
(having a fat point at their zero element).

\smallskip

(iv) Any finite semigroup is algebraic. In the opposite direction, 
the (finite) set of connected components of an algebraic semigroup 
$(S,\mu)$ has a natural structure of semigroup. Indeed, if $C_1,C_2$
are connected components of $S$, then $\mu(C_1,C_2)$ is contained
in a unique connected component, say, $\nu(C_1,C_2)$. The resulting 
composition law $\nu$ on the set of connected components, $\pi_o(S)$, 
is clearly associative, and the canonical map $f : S \to \pi_o(S)$ is a 
homomorphism of algebraic semigroups. In fact, $f$ is the universal
homomorphism from $S$ to a finite semigroup.
\end{example}

Next, we present examples of algebraic monoids and of algebraic groups:

\begin{example}\label{ex:mono}
(i) Consider the set $M_n$ of $n \times n$ matrices with coefficients
in $k$, where $n$ is a positive integer. We may view $M_n$ as an affine 
space of dimension $n^2$; this is an irreducible algebraic monoid 
relative to matrix multiplication, the neutral element being of course 
the identity matrix. 

The subspaces $D_n$ of diagonal matrices, and $T_n$ of upper triangular
matrices, are closed irreducible submonoids of $M_n$. Note that 
$D_n$ is isomorphic to the affine $n$-space $\bA^n$ equipped with 
pointwise multiplication.

An example of a closed reducible submonoid of $M_n$ consists of 
those matrices having at most one nonzero entry in each row and each
column. This submonoid, that we denote by $R_n$, is the closure in 
$M_n$ of the group of monomial matrices (those having exactly one 
nonzero entry in each row and each column). Note that $R_n = D_n S_n$,
where $S_n$ denotes the symmetric group on $n$ letters, viewed as the
group of permutation matrices. Thus, the irreducible components 
of $R_n$ are parametrized by $S_n$. Each such component contains
the zero matrix; in particular, $R_n$ is connected.

\smallskip

(ii) A \emph{linear} algebraic monoid is a closed submonoid $M$ 
of some matrix monoid $M_n$. Then the variety $M$ is affine; 
conversely, every affine algebraic monoid is linear (see 
\cite[Thm.~II.2.3.3]{Demazure-Gabriel}). It follows that 
every affine algebraic semigroup is linear as well, see 
\cite[Cor.~3.16]{Putcha88}.

\smallskip

(iii) Let $A$ be a $k$-algebra (not necessarily associative, 
or commutative, or unital) and denote by $\End(A)$ the set 
of algebra endomorphisms of $A$. Then $\End(A)$, equipped with the
composition of endomorphisms, is an (abstract) monoid with zero.
Its idempotents are exactly the retractions of $A$ to subalgebras.
Given such a retraction $e : A \to B$, we have 
\[ e \End(A) \cong \Hom(A,B), \quad \End(A) e \cong \Hom(B,A),
\quad e \End(A) e \cong \End(B),\]
where $\Hom$ denotes the set of algebra homomorphisms.
Also, $\End(A)_e$ (resp. $_e{} \End(A)$) consists of those 
$\varphi \in \End(A)$ such that $\varphi(x) = x$ for all $x \in B$
(resp. $\varphi(x) - x \in I$ for all $x \in A$, where $I$ denotes 
the kernel of $e$).

If $A$ is finite-dimensional as a $k$-vector space, then 
$\End(A)$ is a linear algebraic monoid; indeed, it identifies 
to a closed submonoid of $M_n$, where $n := \dim(A)$.

\smallskip

(iv) Examples of algebraic groups include:

\noindent $\bullet$
the \emph{additive group} $\bG_a$, i.e., the affine line equipped with 
the addition, 

\noindent $\bullet$
the \emph{multiplicative group} $\bG_m$, i.e., the affine line minus 
the origin, equipped with the multiplication, 

\noindent $\bullet$
the \emph{elliptic curves}, i.e., the complete nonsingular irreducible 
curves of genus $1$, equipped with a base point; then there is a unique 
algebraic group structure for which this point is the neutral element, 
see e.g. \cite[Chap.~II, \S 4]{Hartshorne}). 

In fact, these examples yield all the connected algebraic groups
of dimension $1$, see \cite[Prop.~10.7.1]{Kempf}.

\smallskip

(v) A complete connected algebraic group is called an 
\emph{abelian variety}; elliptic curves are examples of such
algebraic groups. It is known that every abelian variety $A$ 
is a commutative group and a projective variety; moreover, 
the group law on $A$ is uniquely determined by the structure of
variety and the neutral element (see \cite[Chap.~II]{Mumford}).
\end{example}

\subsection{The unit group of an algebraic monoid}
\label{subsec:amo}

In this section, we obtain some fundamental results on the group 
of invertible elements of an algebraic monoid. We shall need the 
following observation:

\begin{proposition}\label{prop:neu}
Let $(M,\mu)$ be an algebraic monoid. Then $M$ has a unique irreducible 
component containing $1$: the {\rm neutral component} $M^o$. 
Moreover, $M^o X = X M^o = X$ for any irreducible component $X$ 
of $M$; in particular, $M^o$ is a closed submonoid of~$M$.
\end{proposition}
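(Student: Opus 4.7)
The plan is to exploit two elementary facts: (a) over an algebraically closed field, the product $X \times Y$ of two irreducible varieties is irreducible, so the image $\mu(X \times Y)$ lies inside a single irreducible component of $M$; and (b) $1$ is a two-sided identity for $\mu$. Everything then reduces to chasing inclusions among irreducible components and invoking their maximality.

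First I would establish uniqueness of the neutral component. If $X$ and $Y$ are irreducible components of $M$ both containing $1$, then by (a) there is an irreducible component $Z$ of $M$ with $\mu(X \times Y) \subseteq Z$. By (b), $X = \mu(X \times \{1\}) \subseteq \mu(X \times Y) \subseteq Z$, and since $X$ is itself a maximal irreducible subset of $M$ this forces $X = Z$; the symmetric argument gives $Y = Z$, hence $X = Y$. Call this common component $M^o$.

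Next I would handle an arbitrary irreducible component $X$ of $M$. The same reasoning places $\mu(M^o \times X)$ inside some irreducible component $X'$, and since $1 \in M^o$ we have $X \subseteq \mu(M^o \times X) \subseteq X'$; maximality of $X$ forces $X = X'$, so the chain collapses to $M^o X = X$. The identity $X M^o = X$ is obtained by the symmetric argument. Specializing to $X = M^o$ gives $M^o \cdot M^o = M^o$, and since $1 \in M^o$ this exhibits $M^o$ as a submonoid of $M$; it is automatically closed, being an irreducible component.

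The argument is really only bookkeeping, so I do not expect any genuine obstacle. The one point to watch is that images of morphisms are in general merely constructible, but here each relevant image is sandwiched between an irreducible component and its ambient irreducible component, so no closure operation is needed; the irreducibility of $X \times Y$ (which requires $k$ algebraically closed, as assumed throughout the paper) is the only nontrivial input.
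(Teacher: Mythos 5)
Your proof is correct and follows essentially the same route as the paper: both arguments rest on the irreducibility of $X \times Y$ over the algebraically closed field $k$, the resulting irreducibility of (the closure of) $\mu(X\times Y)$, and the maximality of irreducible components to collapse the sandwich $X \subseteq XY \subseteq \overline{XY}$. The paper phrases this by showing $Y = XY = \overline{XY}$ directly rather than naming an ambient component $Z$, but the content is identical.
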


\begin{proof}
\smartqed
Let $X$, $Y$ be irreducible components of $M$. 
Then $XY$ is the image of the restriction of $\mu$ to $X \times Y$, 
and hence is a constructible subset of $M$; moreover, its closure 
$\overline{XY}$ is an irreducible subvariety of $M$. If $1 \in X$,
then $Y \subseteq XY \subseteq \overline{XY}$. Since $Y$ is an 
irreducible component, we must have $Y = XY = \overline{XY}$; 
likewise, one obtains that $YX = Y$. In particular, $X X = X$, 
i.e., $X$ is a closed submonoid. If in addition $1 \in Y$, 
then also $XY = YX = Y$, hence $Y = X$. This yields our assertions.
\qed
\end{proof}

\begin{remark}\label{rem:irr}
Any algebraic group $G$ is a nonsingular variety, and hence every
connected component of $G$ is irreducible. Moreover, the neutral
component $G^o$ is a closed normal subgroup, and the quotient group
$G/G^o$ parametrizes the components of $G$. 

In contrast, there exist connected reducible algebraic monoids:
for example, the monoid $R_n$ of Example \ref{ex:mono} (i).
Also, algebraic monoids are generally singular; for example,
the zero locus of $z^2 - xy$ in $\bA^3$ equipped with pointwise 
multiplication.

On a more advanced level, note that any group scheme is reduced
in characteristic $0$ (see e.g. 
\cite[Thm.~II.6.1.1]{Demazure-Gabriel}). In contrast, there always 
exist nonreduced monoid schemes. For example, one may stick 
an arbitrary fat point at the origin of the multiplicative monoid 
$(\bA^1,\times)$, by the construction of Example \ref{ex:semi} 
(iii). 
\end{remark}

\begin{definition}\label{def:unit}
Let $M$ be a monoid and let $x,y \in M$. Then $y$ is a \emph{left}
(resp. \emph{right}) \emph{inverse} of $x$ if $yx = 1$ 
(resp. $xy = 1$). We say that $x$ is \emph{invertible}
(also called a \emph{unit}) if it has a left and a right inverse.
\end{definition}

With the above notation, one readily checks that the left and right
inverses of any unit $x \in M$ are equal. 
Moreover, if $x' \in M$ is another unit with inverse $y'$, then $xy'$ 
is a unit with inverse $x'y$. Thus, the invertible elements of $M$ 
form a subgroup: the \emph{unit group}, that we denote by $G(M)$.

The following result on unit groups of algebraic monoids 
is due to Rittatore in the irreducible case (see 
\cite[Thm.~1]{Rittatore98}). The proof presented here follows
similar arguments.

\begin{theorem}\label{thm:unit}
Let $M$ be an algebraic monoid. Then $G(M)$ is an algebraic group,
open in $M$. In particular, $G(M)$ consists of nonsingular 
points of $M$.
\end{theorem}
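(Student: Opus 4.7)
The plan is to first reduce to the case that $M$ is irreducible, then exhibit an open neighborhood of $1$ inside $G(M)$ by a tangent space computation, next use left translations by units (which are automorphisms of $M$) to conclude $G(M)$ is open, and finally identify $G(M)$ with a naturally constructed algebraic group.

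For any $x \in G(M)$ with inverse $x^{-1}$, the left multiplications $\lambda_x, \lambda_{x^{-1}} : M \to M$ are mutually inverse morphisms, so each is a variety automorphism of $M$. In particular, $\lambda_x$ carries $M^o$ isomorphically onto the irreducible component $X = xM^o$ containing $x$, and restricts to a bijection $G(M^o) \to G(M) \cap X$ (using $G(M) \cap M^o = G(M^o)$, which follows from Proposition~\ref{prop:neu}). Hence $G(M)$ is a disjoint union of such left-translates of $G(M^o)$, reducing the theorem to $M$ irreducible.

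Assume $M$ is irreducible. Put $V := \mu^{-1}(1) \subseteq M \times M$, with first projection $p_1 : V \to M$ having image $R(M) := \{x \in M : \exists y,\ xy = 1\}$. From $\mu(-, 1) = \mu(1, -) = \id_M$ one computes $d\mu_{(1,1)}(u, v) = u + v$, so the Zariski tangent space of $V$ at $(1, 1)$ is $\{(u, v) : u + v = 0\}$. Combined with the fiber-dimension lower bound $\dim_{(1,1)} V \geq \dim M$ (coming from surjectivity of $\mu$), one deduces that $V$ is smooth at $(1, 1)$ of dimension $\dim M$, so $dp_1|_{(1,1)} : (u, -u) \mapsto u$ is an isomorphism and $p_1$ is \'etale at $(1, 1)$. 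Since \'etale morphisms are open, $R(M)$ contains an open neighborhood of $1$. By the symmetric argument, so does $L(M) := \{x : \exists z,\ zx = 1\}$. As $G(M) = R(M) \cap L(M)$ (given $xy = zx = 1$, $y = (zx)y = z(xy) = z$ is a two-sided inverse), $G(M)$ contains an open neighborhood $U$ of $1$. Then for every $x \in G(M)$, the set $xU = \lambda_x(U)$ is open in $M$, contains $x$, and lies in $G(M)$, so $G(M)$ is open in $M$.

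For the algebraic group structure, set $W := \{(x, y) \in M \times M : xy = yx = 1\}$, a closed subvariety, and endow it with the product $(x, y)(x', y') := (xx', y'y)$, unit $(1, 1)$, and inverse $(x, y) \mapsto (y, x)$. Associativity of $\mu$ makes $W$ into an algebraic group, and $p := p_1|_W : W \to M$ has image $G(M)$, is injective on points, and is \'etale throughout $W$---\'etaleness at $(1, 1)$ as above, propagated to every $(x, y) \in W$ via the equivariance $p \circ L_{(x,y)} = \lambda_x \circ p$, whose differentials relate $dp_{(x,y)}$ to $dp_{(1,1)}$ through the automorphism differentials of $L_{(x,y)}$ and $\lambda_x$. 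Hence $p$ is an open immersion, identifying $W$ with $G(M)$ and transporting the algebraic group structure. Since algebraic groups are smooth, every point of $G(M)$ is nonsingular in $M$. The principal obstacle is the tangent space argument establishing smoothness of $V$ at $(1,1)$ (which in particular entails $M$ smooth at $1$); the remaining steps are the translation arguments that become routine once an open neighborhood of $1$ inside $G(M)$ is in hand.
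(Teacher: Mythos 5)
Your overall architecture (reduce to $M$ irreducible, find an open subset of $M$ inside $G(M)$, translate by units, and identify $G(M)$ with the closed subgroup $W\subseteq M\times M^{\op}$) matches the paper's, but the central analytic step is not valid as written. From $T_{(1,1)}V\subseteq\{(u,v): u+v=0\}$ and $\dim_{(1,1)}V\geq\dim M$ you only get the chain
\[ \dim M \;\leq\; \dim_{(1,1)}V \;\leq\; \dim T_{(1,1)}V \;\leq\; \dim T_1(M), \]
and smoothness of $V$ at $(1,1)$ would require the outer two quantities to coincide, i.e. $\dim T_1(M)=\dim M$. That is precisely the assertion that $M$ is nonsingular at $1$, which is part of the \emph{conclusion} of the theorem (you even flag this parenthetically); you cannot feed it in as an input. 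Even granting an isomorphism on Zariski tangent spaces, that only yields that $p_1$ is unramified at $(1,1)$ (closed immersions satisfy this too), not \'etale, so the openness of $R(M)$ near $1$ does not follow. The same defect propagates to your final claim that $p=p_1|_W$ is \'etale on all of $W$ and hence an open immersion: that would require $\dim W=\dim M$ and smoothness of $M$ along $G(M)$, both of which are what is being proved.

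The paper circumvents this in two separate, smoothness-free steps. For the group structure, it regards $\pi: W\to M$ as the orbit map of $1$ for the $W$-action by left multiplication and uses the tangent computation only to show that the \emph{scheme-theoretic} stabilizer of $1$ is trivial (the first projection is injective on $\Lie(W)$); by the general theory of orbits this makes $\pi$ a locally closed immersion onto $G(M)$, with no \'etaleness needed. For openness, it argues set-theoretically: the fiber of $p_1:\mu^{-1}(1)\to M$ over $1$ is the single point $(1,1)$, so by the fiber-dimension theorem any component $C$ of $\mu^{-1}(1)$ through $(1,1)$ has $\dim C=\dim M$ with $p_1|_C$ dominant; hence $p_1(C)$ and $p_2(C)$ each contain a dense open subset of $M$, their intersection is a dense open subset of $M$ contained in $G(M)$, and translation finishes the proof. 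If you replace your \'etaleness claim by this dimension count (accepting a dense open subset of $M$ in $G(M)$ rather than a neighborhood of $1$), the rest of your argument goes through.
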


\begin{proof}
\smartqed
Let
\[ G := \{ (x,y) \in M \times M^{\op} ~\vert~ xy = yx = 1 \}, \]
where $M^{\op}$ denotes the \emph{opposite} monoid to $M$, i.e.,
the variety $M$ equipped with the composition law 
$(x,y) \mapsto yx$. One readily checks that $G$ 
(viewed as a closed subvariety of $M \times M^{\op}$)
is a submonoid; moreover, every $(x,y) \in G$ has inverse $(y,x)$. 
Thus, $G$ is a closed algebraic subgroup of $M \times M^{\op}$.

The first projection $p_1 : M \times M^{\op} \to M$ restricts to a
homomorphism of monoids $\pi : G \to M$ with image being the unit
group $G(M)$. In fact, $G$ acts on $M$ by left multiplication: 
$(x,y) \cdot z := xz$, and $\pi$ is the orbit map 
$(x,y) \mapsto (x,y) \cdot 1$; in particular, $G(M)$ is the $G$-orbit 
of $1$. The isotropy subgroup of $1$ in $G$ is clearly trivial 
as a set. We claim that this also holds as a scheme; in other words, 
the isotropy Lie algebra of $1$ is trivial as well. 

To check this, recall that the Lie algebra $\Lie(G)$ is the
Zariski tangent space $T_{(1,1)}(G)$ and hence is contained in
$T_{(1,1)} (M \times M^{\op}) \cong T_1(M) \times T_1(M)$. Since
the differential at $(1,1)$ of the monoid law
$\mu : M \times M \to M$ is the map 
\[ T_1(M) \times T_1(M) \longrightarrow T_1(M), 
\quad (x,y) \longmapsto x + y, \]
we have 
\[ T_{(1,1)}(G) \subseteq 
\{ (x,y) \in T_1(M) \times T_1(M) ~\vert~ x + y = 0 \}. \]
Thus, the first projection $\Lie(G) \to T_1(M)$ is injective; but 
this projection is the differential of $\pi$ at $(1,1)$.
This proves our claim.

By that claim, $\pi$ is a locally closed immersion.
Thus, $G(M)$ is a locally closed subvariety of $M$, and $\pi$
induces an isomorphism of groups $G \cong G(M)$. So $G(M)$ is
an algebraic group.

It remains to show that $G(M)$ is open in $M$; it suffices to check
that $G(M)$ contains an open subset $U$ of $M$ (then the translates
$g U$, where $g \in G(M)$, form a covering of $G(M)$ by open
subsets of $M$). For this, we may replace $M$ with its neutral 
component $M^o$ (Proposition \ref{prop:neu}) and hence assume that
$M$ is irreducible. Note that 
\[ G(M) = \{ x \in M ~\vert~ 
x y = z x = 1 ~{\rm for}~{\rm some}~ y, z \in M \} \]
(then $y = z x y = z$). In other words,
\[ G(M) = p_1(\mu^{-1}(1)) \cap p_2(\mu^{-1}(1)), \]
where $p_1, p_2 : M \times M \to M$ denote the projections.
Also, the set-theoretic fiber at $1$ of the restriction 
$p_1 : \mu^{-1}(1) \to M$ consists of the single point $1$. 
By a classical result on the dimension of fibers 
of a morphism (see \cite[Exer.~II.3.22]{Hartshorne}),
it follows that every irreducible component $C$ 
of $\mu^{-1}(1)$ containing $1$ satisfies $\dim(C) = \dim(M)$,
and that the restriction $p_1: C \to M$ is dominant. Thus, 
$p_1(C)$ contains a dense open subset of $M$. Likewise, $p_2(C)$ 
contains a dense open subset of $M$, and hence so does $G(M)$.
\qed
\end{proof}

Note that the unit group of a linear algebraic monoid is linear,
see \cite[Cor.~3.26]{Putcha88}. Further properties of the unit 
group are gathered in the following:

\begin{proposition}\label{prop:unit}
Let $M$ be an algebraic monoid, and $G$ its unit group.

\begin{enumerate}
\item[{\rm (i)}]{If $x \in M$ has a left (resp. right) inverse, then 
$x \in G$.}
\item[{\rm (ii)}]{$M \setminus G$ is the largest proper two-sided ideal 
of $M$.}
\item[{\rm (iii)}]{If $1$ is the unique idempotent of $M$, then $M = G$.}
\end{enumerate}

\end{proposition}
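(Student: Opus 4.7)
The plan is to prove the three parts in the reverse of their stated order—(iii), then (i), then (ii)—since the cleanest approach is to establish (iii) directly from Proposition~\ref{prop:idem} and then reduce (i) to (iii) via an auxiliary monoid.

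For (iii), assume $1$ is the unique idempotent of $M$. The first observation is that any one-sided inverse is automatically two-sided: if $xz=1$, then $(zx)^2=z(xz)x=zx$, so $zx$ is idempotent, and by hypothesis $zx=1$; thus $x$ has $z$ as a two-sided inverse and lies in $G$. The symmetric statement holds by the same calculation. I would then show that $M\setminus G$ is a closed subsemigroup: closedness comes from Theorem~\ref{thm:unit}, and for $x\notin G$ and $m\in M$, if $xm\in G$ then $x(m(xm)^{-1})=1$ gives $x$ a right inverse, forcing $x\in G$ by the first observation—a contradiction; likewise for $mx$. So $M\setminus G$ is an ideal and in particular a subsemigroup. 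If it were nonempty, Proposition~\ref{prop:idem} would produce an idempotent in it; but the only idempotent of $M$ is $1\in G$, contradiction. Hence $M=G$.

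For (i), given $yx=1$, introduce the auxiliary subscheme
\[ H := \{(a,b)\in M\times M^{\op} ~|~ ba=1\}, \]
equipped with the composition $(a_1,b_1)(a_2,b_2):=(a_1a_2,b_2b_1)$ and unit $(1,1)$; a direct check makes $H$ a closed submonoid of the algebraic monoid $M\times M^{\op}$. Its only idempotent is $(1,1)$: from $f^2=f$ and $fe=1$ we get $f=f(fe)=f^2e=fe=1$, whence $e=1\cdot e=fe=1$. Since idempotents are closed points, $H^{\red}$ (which inherits the monoid structure, as $k$ is algebraically closed so $H^{\red}\times H^{\red}$ is reduced and the multiplication descends) has the same unique idempotent. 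Applying (iii) to $H^{\red}$ gives $H^{\red}=G(H^{\red})$; since $(a,b)\in G(H^{\red})$ forces $a\in G(M)$ with $b=a^{-1}$, the inclusion $(x,y)\in H$ yields $x\in G(M)$. The case of a right inverse is handled symmetrically by passing to $M^{\op}$.

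For (ii), $M\setminus G$ is closed by Theorem~\ref{thm:unit}, and is a two-sided ideal by the same promotion argument as in (iii), now invoking (i) rather than the unique-idempotent trick. Properness follows from $1\in G$, and maximality from the fact that any proper ideal $I$ must avoid $G$: $g\in I\cap G$ would yield $M=Mgg^{-1}\subseteq I$. The principal obstacle is the inversion of the natural logical order of proof, compounded by the need to handle the possibly non-reduced auxiliary monoid $H$; the payoff is that (iii), (i), and (ii) fall out together once the $H$-trick is in place.
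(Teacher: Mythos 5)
Your proof is correct, but it takes a genuinely different route from the paper's. The paper proves the parts in the stated order, and the whole chain rests on part (i), which it establishes by observing that left multiplication by $x$ is an injective endomorphism of the variety $M$ and invoking the Ax--Borel theorem that injective endomorphisms of varieties are surjective; parts (ii) and (iii) then follow formally (with (iii) using Proposition~\ref{prop:idem} and the closedness of $M \setminus G$ from Theorem~\ref{thm:unit}, just as you do). You instead prove (iii) first, exploiting the fact that under the unique-idempotent hypothesis the promotion of one-sided to two-sided inverses is elementary (the idempotent $zx$ must equal $1$), and then deduce (i) by applying (iii) to the auxiliary monoid $H = \{(a,b) \in M \times M^{\op} \mid ba = 1\}$, whose unique idempotent is $(1,1)$. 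The computations there all check out: $H$ is a closed submonoid, $fe=1$ with $e,f$ idempotent forces $e=f=1$, and invertibility of $(x,y)$ in $H$ forces $x \in G(M)$. The trade-off is clear: the paper's argument is shorter but imports a nontrivial external result (Ax--Borel), whereas yours is entirely self-contained given Proposition~\ref{prop:idem} and Theorem~\ref{thm:unit}, at the cost of the inverted logical order and the extra construction. Your worry about $H^{\red}$ is harmless but unnecessary under the paper's conventions, since a closed subvariety carries the reduced structure by definition and the multiplication descends to it automatically.
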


\begin{proof}
\smartqed
(i) Assume that $x$ has a left inverse $y$. Then the left 
multiplication $M \to M$, $z \mapsto xz$ is an injective 
endomorphism of the variety $M$. By \cite[Thm.~C]{Ax} 
(see also \cite{Borel}), this endomorphism is surjective, 
and hence there exists $z \in M$ such that $xz = 1$. 
Then $y = yxz = z$, i.e., $x \in G$. The case where $x$ 
has a right inverse is handled similarly.

(ii) Clearly, any proper two-sided ideal of $M$ is contained 
in $M \setminus G$. We show that the latter is a two-sided 
ideal: let $x \in M \setminus G$ and $y \in M$. If $xy \in G$, 
then $y (xy)^{-1}$ is a right inverse of $x$. By (i), 
it follows that $x \in G$, a contradiction. Thus, 
$(M \setminus G) M \subseteq M \setminus G$. Likewise,
$M (M \setminus G) \subseteq M \setminus G$.

(iii) By Theorem \ref{thm:unit}, $M \setminus G$ is closed in $M$;
also, $M \setminus G$ is a subsemigroup of $M$ by (ii). Thus, if 
$M \neq G$ then $M \setminus G$ contains an idempotent, in view of
Proposition \ref{prop:idem}.
\qed
\end{proof}

\subsection{The kernel of an algebraic semigroup}
\label{subsec:kas}

In this subsection, we show that every algebraic semigroup 
has a smallest two-sided ideal (called its \emph{kernel}) 
and we describe the structure of that ideal, thereby generalizing 
some of the known results about the kernel of a linear algebraic 
semigroup (see \cite{Putcha88, Huang}). 

First, recall that the idempotents of any (abstract) semigroup 
$S$ are in bijective correspondence with the maximal submonoids
of $S$, via $e \mapsto eSe$, and hence with the maximal subgroups
of $S$, via $e \mapsto G(eSe)$. Thus, when $S$ is an algebraic
semigroup, its maximal subgroups are all locally closed in view
of Theorem \ref{thm:unit}. They are also pairwise disjoint: 
if $e \in E(S)$ and $x \in G(eSe)$, then there exists $y \in eSe$
such that $x y = y x = e$. Thus, $x S x \supseteq x y S y x = e S e$.
But $x S x \subseteq e S e S e S e \subseteq e S e$, and hence 
$x S x = e S e$. So $x S x$ is a closed submonoid of $S$ 
with neutral element $e$.

Next, we recall the classical definition of a partial 
order on the set of idempotents of any (abstract) semigroup:

\begin{definition}\label{def:order}
Let $S$ be a semigroup and let $e,f \in E(S)$. Then $e \leq f$
if we have $e = ef = fe$.
\end{definition}

Note that $e \leq f$ if and only if $e \in f S f$; this is also
equivalent to the condition that $e S e \subseteq f S f$. Thus, 
$\leq$ is indeed a partial order on $E(S)$ (this fact may 
of course be checked directly). Also, note that $\leq$ is 
preserved by every homomorphism of semigroups. For an 
algebraic semigroup, the partial order $\leq$ satisfies 
additional finiteness properties:

\begin{proposition}\label{prop:mini}
Let $S$ be an algebraic semigroup. 

\begin{enumerate}
\item[{\rm (i)}]{Every subset of $E(S)$ has a minimal element with 
respect to the partial order $\leq$, and also a maximal element.}
\item[{\rm (ii)}]{$e \in E(S)$ is minimal among all idempotents
if and only if $eSe$ is a group.}
\item[{\rm (iii)}]{If $S$ is commutative, then $E(S)$ is finite and 
has a smallest element.}
\end{enumerate}

\end{proposition}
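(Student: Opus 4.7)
For (i), my plan is to encode the partial order on $E(S)$ via closed subvarieties of $E(S)$ itself. For each $e \in E(S)$, set
\[ L(e) := \{ f \in E(S) ~\vert~ ef = fe = f \}
\quad \text{and} \quad
U(e) := \{ f \in E(S) ~\vert~ ef = fe = e \}, \]
both closed in $E(S)$ by their defining equations. The key observation is that if $e < f$, then $L(e) \subsetneq L(f)$ and $U(e) \supsetneq U(f)$ strictly: indeed $f \in L(f) \setminus L(e)$, since $ef = f$ together with $ef = e$ (from $e \leq f$) would force $e = f$, and symmetrically for $U$. Consequently, any strictly ascending chain $e_1 < e_2 < \cdots$ in $E(S)$ would produce a strictly descending chain $U(e_1) \supsetneq U(e_2) \supsetneq \cdots$ of closed subvarieties, contradicting the noetherianness of $E(S)$; strictly descending chains are excluded symmetrically via $L$. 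So $(E(S),\leq)$ satisfies both ACC and DCC, and every nonempty subset has both a maximal and a minimal element.

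For (ii), I would apply Proposition \ref{prop:unit}(iii) to the algebraic monoid $eSe$ (with neutral element $e$), after noting that the idempotents of $eSe$ are exactly the elements of $L(e)$. If $e$ is minimal, then $L(e) = \{e\}$, so $e$ is the unique idempotent of $eSe$, and Proposition \ref{prop:unit}(iii) gives that $eSe$ is a group. Conversely, a group has a unique idempotent, so $eSe$ a group forces $L(e) = \{e\}$ and $e$ is minimal.

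For (iii), the smallest-element statement follows easily from (i) and commutativity: let $e_0$ be any minimal element. For $f \in E(S)$, commutativity gives $(e_0 f)^2 = e_0^2 f^2 = e_0 f$ and $(e_0 f)e_0 = e_0(e_0 f) = e_0 f$, so $e_0 f \in E(S)$ and $e_0 f \leq e_0$; minimality forces $e_0 f = e_0$, i.e., $e_0 \leq f$.

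The main obstacle is finiteness of $E(S)$. My plan is to establish by induction on dimension the auxiliary lemma that every commutative algebraic band $B$ (a commutative algebraic semigroup in which every element is idempotent) is finite, and apply it to $B = E(S)$, which is a closed commutative sub-band of $S$. The base case $\dim B = 0$ is trivial. Suppose $\dim B = d \geq 1$ and the lemma holds in dimensions $< d$. Let $X$ be an irreducible component of $B$ of dimension $d$: then $\overline{\mu(X \times X)}$ is an irreducible closed subset of $B$ containing $X$ (via the diagonal $\mu(x,x) = x$), hence equals $X$, so $X \cdot X \subseteq X$ and $X$ is itself a sub-band. For each $x \in X$, the morphism $\mu_x : X \to X$, $y \mapsto xy$, is a retraction onto the closed irreducible sub-band $xX = \{y \in X : xy = y\}$. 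Either $xX = X$, in which case $\mu_x = \id_X$ and $x$ is a greatest element of $X$; or $\dim xX < d$, in which case the inductive hypothesis forces $xX$ to be finite, hence (being irreducible) a single point, which must be $\mu_x(x) = x$, so $xy = x$ for all $y$ and $x$ is a least element of $X$. Commutativity makes greatest and least elements of $X$ unique, so $|X| \leq 2$; this contradicts the fact that an irreducible variety of positive dimension over an algebraically closed field is infinite. Hence every irreducible component of $B$ is a point, and $B$ is finite.
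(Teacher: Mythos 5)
Your proof is correct. Parts (i), (ii) and the existence of a smallest idempotent in (iii) are essentially the paper's own arguments: the paper also deduces (i) from noetherianness, using the closed subsets $eSe$ of $S$ (for minimal elements) and $S \times_{Se} S$ of $S \times S$ (for maximal elements) where you use the down- and up-sets $L(e), U(e)$ inside $E(S)$ --- a cosmetic difference, and your encoding is the more symmetric of the two; (ii) is the same reduction to Proposition \ref{prop:unit} (iii) via the observation that $E(eSe)$ is the set of idempotents below $e$; and the computation showing $ef \leq e$ and $ef \leq f$ for commuting idempotents is the paper's as well. The genuine divergence is the finiteness of $E(S)$. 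The paper reduces to a connected commutative band $S$, notes that for $x \in S$ the monoid $xS$ is connected with unit group $\{x\}$, and invokes the openness of the unit group (Theorem \ref{thm:unit}) to conclude that $x$ is isolated in $xS$, whence $xS = \{x\}$ and then $S = \{x\}$. You instead induct on dimension and show that every point of a top-dimensional irreducible component $X$ is either a neutral element or a zero element of $X$, so that $X$ has at most two points, contradicting $\dim X \geq 1$. Your route avoids Theorem \ref{thm:unit} altogether, relying only on elementary facts (a proper closed subset of an irreducible variety has smaller dimension; a positive-dimensional irreducible variety over an algebraically closed field is infinite), and it yields the slightly sharper statement that every irreducible commutative algebraic band is a point; the paper's route is shorter but leans on the earlier, harder theorem. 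Both arguments are complete.
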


\begin{proof}
\smartqed
(i) Note that the $e S e$, where $e \in E(S)$,
form a family of closed subsets of the noetherian topological
space $S$; hence any subfamily has a minimal element. For the 
existence of maximal elements, consider the family
\[ S \times_{Se} S := \{ (x,y) \in S \times S ~\vert~ x e = y e \} \]
of closed subsets of $S \times S$. Let $f \in E(S)$ such that
$e \leq f$. Then $S \times_{Sf} S \subseteq S \times_{Se} S$, 
since the equality $x f = y f$ implies that $x e = x f e = y f e = y e$. 
Moreover, if $S \times_{Se} S = S \times_{Sf} S$, then 
$(x,xe) \in S \times_{Sf} S$ for all $x \in S$, i.e., $xf = x e f$. 
Hence $x f = xe$, and $f = f^2 = fe = e$. Thus, a minimal 
$S \times_{Se} S$ (for $e$ in a given subset of $E(S)$) yields 
a maximal $e$.

(ii) Let $e$ be a minimal idempotent of $S$. 
Then $e$ is the unique idempotent of the algebraic monoid $eSe$. 
By Proposition \ref{prop:unit} (iii), it follows that $eSe$ is a group. 
The converse is immediate (and holds for any abstract semigroup).

(iii) Let $e,f$ be idempotents. Then $ef = fe$ is also idempotent,
and $ef = e(ef)e = f(ef)f$ so that $ef \leq e$ and $ef \leq f$.
Thus, any two minimal idempotents are equal, i.e., $E(S)$ has
a smallest element.

To show that $E(S)$ is finite, we may replace $S$ with its
closed subsemigroup $E(S)$, and hence assume that every element
of $S$ is idempotent. Then every connected component of $S$ is 
a closed subsemigroup in view of Example \ref{ex:semi} (iv).
So we may further assume that $S$ is connected. Let $x \in S$; 
then $x S$ is a connected commutative algebraic monoid 
with neutral element $x$, and consists of idempotents. 
Thus, $G(x S) = \{ x \}$. By Theorem \ref{thm:unit}, 
it follows that $x$ is an isolated point of $x S$. 
Hence $x S = \{ x \}$, i.e., $xy = x$ for all $y \in S$. 
Since $S$ is commutative, we must have $S = \{ x \}$.  
\qed
\end{proof}

As a consequence of the above proposition, every algebraic
semigroup admits minimal idempotents. These are of special
interest, as shown by the following:

\begin{proposition}\label{prop:SeS}
Let $S$ be an algebraic semigroup, $e \in S$ a minimal idempotent,
and $eSe$ the associated closed subgroup of $S$.

\begin{enumerate}
\item[{\rm (i)}]{The map
\[ \rho = \rho_e : S \longrightarrow S, \quad 
s \longmapsto s (ese)^{-1} s \]
is a retraction of varieties of $S$ to $SeS$. In particular,
$SeS$ is a closed two-sided ideal of $S$.}
\item[{\rm (ii)}]{The map
\[ \varphi: {_e{}Se} \times eSe \times eS_e \longrightarrow S,
\quad (x,g,y) \longmapsto xgy \]
yields an isomorphism of varieties to its image, $SeS$.}
\item[{\rm (iii)}]{Via the above isomorphism, the semigroup law on $SeS$ 
is identified to that on ${_e{}Se} \times eSe \times eS_e$ given by
\[ (x,g,y) (x',g',y') = (x,g \pi(y,x') g',y'), \]
where $\pi : eS_e \times {_e{}Se} \to eSe$ denotes the map
$(y,z) \mapsto yz$. This identifies the idempotents of $SeS$ 
to the triples $(x,\pi(y,x)^{-1},y)$, where 
$x \in {_e{}Se}$ and $y \in eS_e$. In particular, 
$E(SeS) \cong {_e{}Se} \times eS_e$ as a variety.}
\item[{\rm (iv)}]{The semigroup $SeS$ has no proper two-sided ideal.} 
\item[{\rm (v)}]{$SeS$ is the smallest two-sided ideal of $S$; 
in particular, it does not depend on the minimal idempotent $e$.} 
\item[{\rm (vi)}]{The minimal idempotents of $S$ are exactly the
idempotents of $SeS$.}
\end{enumerate}

\end{proposition}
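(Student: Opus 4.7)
The plan rests on the observation that $eSe$ is a group by Proposition~\ref{prop:mini}(ii), so $(ese)^{-1}$ depends algebraically on $s$: inversion in $eSe$ is a morphism by Theorem~\ref{thm:unit}. All formulas in the statement thereby become morphisms of varieties, and the rest is bookkeeping with the defining relations $ex=e$, $xe=x$ for $x \in {_e{}Se}$; $ey=y$, $ye=e$ for $y \in eS_e$; and $eg=g=ge$ for $g \in eSe$.

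For (i), I would write any $s \in SeS$ as $aeb$ and factor $ese = eaebe = (eae)(ebe)$ in the group $eSe$; a short manipulation using the relations above yields $s(ese)^{-1}s = s$. Hence $\rho$ is the identity on $SeS$, and $SeS = \{s : \rho(s) = s\}$ is closed; the form of $\rho(s)$ makes it a two-sided ideal by inspection. For (ii), I would introduce the candidate inverse
\[
\psi(s) \,:=\, \bigl( s(ese)^{-1} e,\; ese,\; e(ese)^{-1} s \bigr),
\]
verify that each component lands in the correct factor, and compute $\psi \circ \varphi = \id$ by direct substitution (each coordinate collapses cleanly using the five relations). The other composition gives $\varphi \circ \psi(s) = s(ese)^{-1} s = \rho(s) = s$ on $SeS$ by (i), so $\varphi$ is an isomorphism of varieties.

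For (iii), one checks that $e \cdot yx' = yx' \cdot e = yx'$, so $\pi(y, x') := yx' \in eSe$ is well-defined; the formula $(xgy)(x'g'y') = x \bigl( g\,\pi(y,x')\,g' \bigr) y'$ is then immediate, and setting this equal to $(x,g,y)$ forces $g\,\pi(y,x)\,g = g$ in the group $eSe$, i.e.~$g = \pi(y,x)^{-1}$. For (iv), given a non-empty two-sided ideal $I \subseteq SeS$ containing some $s = xgy$, the identities $g_1 x = g_1(ex) = g_1$ and $y g_2 = g_2$ for $g_1, g_2 \in eSe$ give $eSe \cdot s \cdot eSe = eSe \cdot g \cdot eSe = eSe$, so $e \in I$ and hence $SeS \subseteq I$.

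Part (v) follows at once: for any non-empty two-sided ideal $J \subseteq S$, the set $JeJ$ is a non-empty two-sided ideal contained in $J \cap SeS$, so the latter is a non-empty two-sided ideal of $SeS$ and equals $SeS$ by (iv). For (vi), any minimal idempotent $f$ satisfies $SfS = SeS$ by (v), so $f = f^3 \in SeS$. Conversely, for an idempotent $f = x(yx)^{-1}y \in SeS$ produced by (iii), a direct calculation gives $fsf = x \bigl[ (yx)^{-1}(ysx)(yx)^{-1} \bigr] y$ with $ysx \in eSe$, and as $s$ ranges over $S$ the element $ysx$ covers all of $eSe$. Via $\varphi$ the subsemigroup $fSf$ therefore corresponds to $\{x\} \times eSe \times \{y\}$ with the twisted product $(g,g') \mapsto g(yx)g'$, which is a group with identity $(yx)^{-1}$, i.e.~with identity $f$; by Proposition~\ref{prop:mini}(ii), $f$ is minimal. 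The main obstacle is really the single algebraic identity $s(ese)^{-1}s = s$ for $s \in SeS$, which powers both (i) and (ii); once it is in hand, everything else is bookkeeping with the five defining relations.
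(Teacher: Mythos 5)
Your proposal is correct and follows essentially the same route as the paper: the key identity $s(ese)^{-1}s=s$ on $SeS$ proved by factoring $ese=(eae)(ebe)$ in the group $eSe$, the explicit inverse $\psi(s)=(s(ese)^{-1},ese,(ese)^{-1}s)$ to the multiplication map, the Rees-type product formula, and the same ideal computations. The only (cosmetic) differences are that the paper uses $SeI$ rather than $JeJ$ in part (v), and in part (vi) it concludes minimality of $f$ by observing that $f$ is the unique idempotent of $fSf$ rather than by exhibiting $fSf$ as a group.
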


\begin{proof}
\smartqed
(i) Clearly, $\rho$ is a morphism; also, since
$s (ese)^{-1} s \in S e S e S$ for all $s \in S$, the image of 
$\rho$ is contained in $SeS$. Let $z \in SeS$ and write 
$ z = set$, where $s,t \in S$; then
\[ \rho(z) = set (esete)^{-1} set 
= sete (ete)^{-1} (ese)^{-1} eset = set = z. \]
This yields the assertions.

(ii) Clearly, $\varphi$ takes values in $SeS$. Moreover, 
for any $s,t$ as above, we have 
\[ s e t = s e (ese)^{-1} esete (ete)^{-1} t = x g y , \]
where $x := se (ese)^{-1}$, $g := esete$ and $y := (ete)^{-1} et$.
Furthermore, $x \in {_e{}Se}$, $g \in eSe$ and $y \in eS_e$. 
In particular, the image of $\varphi$ is the whole $SeS$. 
Also, the map
\[ \psi : SeS \longrightarrow {_e{}Se} \times eSe \times eS_e, 
\quad s e t \longmapsto (x,g,y) \]
(where $x,g,y$ are defined as above) is a morphism of varieties
and satisfies $\varphi \circ \psi = \id$. Thus, it suffices to 
check that $\psi \circ \varphi = \id$. Let $x \in {_e{}Se}$, 
$g \in eSe$, $y \in eS_e$ and put $s := xgy$. Then $se = xg$
and $es = gy$. Hence $g = ese$, $x = se (ese)^{-1}$ and
$y = (ese)^{-1} es$, which yields the desired assertion.

(iii) For the first assertion, just write 
$(xgy)(x'g'y')= x (g(yx')g') y'$, and note that 
$yx' \in eS_e \, {_e{}Se} \subseteq eSe$. The assertions on 
idempotents follow readily.

(iv) Let $z \in SeS$ and write $z = \varphi(x,g,y)$. Then 
the subset $SzS$ of $SeS$ is identified with that of
${_e{}Se} \times eSe \times eS_e$ consisting of the triples
$(x_1, g_1 \pi(y_1,x) g \pi(y,x_2) g_2,y_2)$, where
$x_1,x_2 \in {_e{}Se}$, $g_1,g_2 \in eSe$ and $y_1,y_2 \in eS_e$.
It follows that $SzS = SeS$; in particular, $SzS$ contains $z$.
Hence $SeS$ is the smallest two-sided ideal containing $z$.

(v) Let $I$ be a two-sided ideal of $S$. Then $SeI$ is a
two-sided ideal of $S$ contained in $SeS$; hence $SeI = SeS$ 
by (iv). But $SeI \subseteq I$; this yields our assertions.

(vi) If $f \in E(S)$ is minimal, then $SfS = SeS$ by (v). 
Thus, $f \in SeS$. 

For the converse, let $f \in E(SeS)$. Then $SfS = SeS$ by (iv),
and hence $fSf = fSfSf = f(SeS)f$. Identifying $f$ to a triple
$(x,\pi(y,x)^{-1},y)$, one checks as in the proof of (iv) 
that $f(SeS)f$ is identified to the set of triples $(x,g,y)$, 
where $g \in eSe$. But $(x,\pi(y,x)^{-1},y)$ is the unique 
idempotent of this set. Thus, $f$ is the unique idempotent 
of $fSf$, i.e., $f$ is minimal.
\qed
\end{proof}

In view of these results, we may set up the following: 

\begin{definition}\label{def:ker}
The \emph{kernel} of an algebraic semigroup $S$ is the smallest
two-sided ideal of $S$, denoted by $\kernel(S)$.
\end{definition}

\begin{remark}\label{rem:simple}
(i) As a consequence of Proposition \ref{prop:SeS}, we see that any 
algebraic semigroup having no proper closed two-sided ideal is 
\emph{simple}, i.e., has no proper two-sided ideal at all. 
Moreover, any simple algebraic semigroup $S$, equipped with 
an idempotent $e$, is isomorphic (as a variety) to the product 
$X \times G \times Y$, where 
$X:= {_e{}Se}$ and $Y:=eS_e$ are varieties, and $G:= eSe$ is an 
algebraic group. This identifies $e$ to a point of the form 
$(x_o,1,y_o)$, where $x_o \in X$ and $y_o \in Y$; 
moreover, the semigroup law of $S$ is identified
to that as in Proposition \ref{prop:SeS} (iii), 
where $\pi: Y \times X \to G$ is a morphism such that 
$\pi(x_o,y) = \pi(x,y_o) = 1$ for all $x \in X$ and $y \in Y$. 

Conversely, any tuple $(X,Y,G,\pi,x_o,y_o)$ satisfying the above 
conditions defines an algebraic semigroup law on 
$S:= X \times G \times Y$ such that $e:= (x_o,1,y_o)$ is idempotent and 
${_e{}Se} = X \times \{ (1,y_o) \}$, 
$eSe = \{ x_o \} \times G \times \{ y_o \}$,
$eS_e = \{ (x_o,1) \} \times Y$.  

This description of algebraic semigroups having no proper 
closed two-sided ideal is a variant of the classical 
Rees structure theorem for those (abstract) semigroups 
that are \emph{completely simple}, that is, simple and having 
a minimal idempotent (see e.g. \cite[Thm.~1.9]{Putcha88}). 

(ii) By analogous arguments, one shows that every algebraic
semigroup $S$ contains minimal left ideals, and these are
exactly the subsets $Sf$, where $f$ is a minimal idempotent.
In particular, the minimal left ideals are all closed. Also,
given a minimal idempotent $e$, these ideals are exactly
the subsets $X \times G \times \{ y \}$ of $\kernel(S)$,
where $X := {_e{}Se}$, $G := eSe$ and $y \in Y:= eS_e$ as 
above. Similar assertions hold of course for the minimal
right ideals; it follows that the intersections of 
minimal left and minimal right ideals are exactly the subsets
$\{ x \} \times G \times \{ y \}$, where $x \in X$ and 
$y \in Y$. 
\end{remark}

\subsection{Unit dense algebraic monoids}
\label{subsec:udam}

In this subsection, we introduce and study the class of unit dense 
algebraic monoids. These include the irreducible algebraic monoids,
and will play an important role in their structure.

Let $M$ be an algebraic monoid, and $G(M)$ its unit group. Then 
the algebraic group $G(M) \times G(M)$ acts on $M$ via left and 
right multiplication: $(g,h) \cdot x := g x h^{-1}$. Moreover, 
the orbit of $1$ under this action is just $G(M)$, and 
the isotropy subgroup scheme of $1$ equals the diagonal,
$\diag(G(M)) := \{(g,g) ~\vert~ g \in G(M) \}$. 

\begin{definition}\label{def:ude}
An algebraic monoid $M$ is \emph{unit dense} if $G(M)$ is dense in $M$.
\end{definition}

For instance, every irreducible algebraic monoid is unit dense.
An example of a reducible unit dense algebraic monoid consists of 
$n \times n$ matrices having at most one nonzero entry in each 
row and each column (Example \ref{ex:mono} (i)).

Any unit dense monoid may be viewed as an equivariant embedding 
of its unit group, in the sense of the following:

\begin{definition}\label{def:equiv}
Let $G$ be an algebraic group. An \emph{equivariant embedding} of $G$
is a variety $X$ equipped with an action of $G \times G$ and with
a point $x \in X$ such that the orbit $(G \times G) \cdot x$ is dense
in $X$, and the isotropy subgroup scheme $(G \times G)_x$ is the 
diagonal, $\diag(G)$.
\end{definition}

Note that the law of a unit dense monoid is uniquely determined
by its structure of equivariant embedding, since that structure
yields the law of the unit group. Also, given an \emph{affine} 
algebraic group $G$, every \emph{affine} equivariant embedding 
of $G$ has a unique structure of algebraic monoid such that 
$G$ is the unit group, by \cite[Prop.~1]{Rittatore98}. Conversely,
every unit dense algebraic monoid with unit group $G$ is affine
by Theorem \ref{thm:aff} below. For an arbitrary connected 
algebraic group $G$, the equivariant embeddings of $G$ that admit 
a monoid structure are characterized in Theorem \ref{thm:monemb}
below.

\begin{proposition}\label{prop:neut}
Let $M$ be an algebraic monoid, and $G$ its unit group. Then the 
unit group of the neutral component $M^o$ is the neutral component 
$G^o$ of~$G$.

If $M$ is unit dense, then its irreducible components are exactly 
the subsets $gM^o$, where $g \in G$; they are indexed by $G/G^o$, 
the group of components of $G$.
\end{proposition}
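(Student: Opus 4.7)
The plan is to leverage Theorem \ref{thm:unit} (that $G(M)$ is open in $M$) together with Proposition \ref{prop:neu} (that $M^o$ is the unique irreducible component of $M$ containing $1$, and that $M^o X = X M^o = X$ for every irreducible component $X$).

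For the first assertion, I will prove the chain of equalities $G(M^o) = G(M) \cap M^o = G^o$. The intersection $G(M) \cap M^o$ is a nonempty (it contains $1$) open subset of the irreducible variety $M^o$, hence is irreducible and in particular connected. As it lies inside the algebraic group $G(M)$ and contains $1$, it is contained in the connected component $G^o$ of the identity. For the reverse inclusion, Remark \ref{rem:irr} gives that $G^o$ is irreducible (algebraic groups are nonsingular, so their connected components are irreducible); since $1 \in G^o$ and $G^o \subseteq M$, the uniqueness in Proposition \ref{prop:neu} forces $G^o \subseteq M^o$, and thus $G^o \subseteq G(M) \cap M^o$. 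Finally, to replace $G(M) \cap M^o$ by $G(M^o)$: the inclusion $G(M^o) \subseteq G(M) \cap M^o$ is immediate, and conversely if $x \in G^o$, then also $x^{-1} \in G^o \subseteq M^o$, so $x$ is already invertible inside the submonoid $M^o$.

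For the second assertion, assume $M$ is unit dense. Decompose the algebraic group $G$ into the finite disjoint union of its (irreducible) connected components $gG^o$ as $g$ ranges over a set of representatives of $G/G^o$. By density, $M = \overline{G} = \bigcup_g \overline{gG^o}$. Left multiplication by $g \in G$ is an automorphism of the variety $M$ (its inverse is left multiplication by $g^{-1}$), so each $gM^o$ is closed and irreducible. It contains $gG^o$, and since by part (1) $G^o$ is a nonempty open, hence dense, subset of the irreducible $M^o$, we obtain $\overline{gG^o} = gM^o$. Thus $M = \bigcup_g gM^o$ is a finite union of closed irreducible subvarieties all of the same dimension $\dim M^o$; none can properly contain another, so these are exactly the irreducible components of $M$. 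The indexing is immediate from part (1): $gM^o = hM^o$ forces $h^{-1}g \in M^o$, which together with $h^{-1}g \in G$ and $G \cap M^o = G^o$ gives $gG^o = hG^o$; the converse uses $G^o \subseteq M^o$ and the identity $M^o M^o = M^o$ from Proposition \ref{prop:neu} to conclude $(h^{-1}g)M^o = M^o$.

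I do not anticipate a genuine obstacle; the argument is essentially topological bookkeeping once one has Theorem \ref{thm:unit} and Proposition \ref{prop:neu} at hand. The only points that need a little care are invoking \emph{irreducibility} rather than mere connectedness of $G^o$ (which rests on the smoothness of algebraic groups recalled in Remark \ref{rem:irr}), and the verification that $g M^o = h M^o$ whenever $g \in hG^o$, which uses $G^o \subseteq M^o$ and the absorption property $M^o \cdot M^o = M^o$.
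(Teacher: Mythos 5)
Your proof is correct and follows essentially the same route as the paper: both parts rest on Theorem \ref{thm:unit} (openness of the unit group), Proposition \ref{prop:neu}, and the irreducibility of the components of an algebraic group. The only cosmetic difference is that you establish $G(M)\cap M^o = G^o$ by a direct double inclusion where the paper characterizes $G(M^o)$ as the unique open irreducible subgroup of $G$, and in the second part you cover $M$ by the closures $\overline{gG^o}$ where the paper picks a unit in each irreducible component and translates it back to $1$; these are interchangeable bits of bookkeeping.
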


\begin{proof}
\smartqed
Note that $G(M^o)$ is contained in $G$, and open in $M^o$ 
by Theorem \ref{thm:unit}. Hence $G(M^o)$ contains
an open neighborhood of $1$ in $M^o$, or equivalently in $G$. 
Using the group structure, it follows that $G(M^o)$ is open 
in $G$; also, $G(M^o)$ is irreducible since so is $M^o$. But 
the algebraic group $G$ contains a unique open irreducible 
subgroup: its neutral component. Thus, $G(M^o) = G^o$. 

Clearly, $gM^o$ is an irreducible component of $M$ for any $g \in G$,
and this component depends only on the coset $g G^o$. If $M$ is unit
dense, then any irreducible component $X$ of $M$ contains a unit,
say $g$. Since $g^{-1}X$ is an irreducible component containing $1$, 
it follows that $X = g M^o$. If $X = h M^o$ for some $h \in G$, then 
$g^{-1} h \in G \cap M^o$. Thus, $g^{-1}h G^o$ is an open subset of 
$M^o$, and hence meets $G^o$; so $g^{-1}h \in G^o$, i.e., $gG^o = hG^o$.
\qed
\end{proof}

\begin{proposition}\label{prop:sim}
Let $M$ be a unit dense algebraic monoid, and $G$ its unit group.
Then the kernel, $\kernel(M)$, is the unique closed orbit 
of $G \times G$ acting by left and right multiplication. 
Moreover, $\kernel(M) = G e G$ for any minimal idempotent $e$ of $M$.
\end{proposition}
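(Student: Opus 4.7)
The plan is to prove $GeG = \kernel(M)$ first, and then deduce uniqueness of the closed $G\times G$-orbit by showing that any closed $G\times G$-invariant subset of a unit-dense monoid is automatically a two-sided ideal, so must contain $\kernel(M)$.

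For the identification $GeG = \kernel(M)$, I would proceed in two steps. First, note that $GeG$ is the $G\times G$-orbit of $e$ under the action $(g,h)\cdot x = gxh^{-1}$, hence locally closed in $M$; moreover, by Proposition \ref{prop:SeS}(v) we have $\kernel(M) = MeM$. By continuity of the ternary multiplication $M\times M\times M\to M$, $(a,b,c)\mapsto abc$, together with density of $G$ in $M$, the subset $GeG$ (the image of $G\times\{e\}\times G$) is dense in $MeM$ (the image of $M\times\{e\}\times M$); conversely $GeG\subseteq MeM$ and the latter is closed, so $\overline{GeG} = MeM = \kernel(M)$. Second, to upgrade this to equality, let $C := \kernel(M)\setminus GeG$. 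Since $GeG$ is locally closed and dense in $\kernel(M)$, it is open in $\kernel(M)$, so $C$ is closed in $M$. If $C$ were nonempty, I would pick $z\in C$; the same density argument applied to $z$ in place of $e$ yields $\overline{GzG} \supseteq MzM$, while Proposition \ref{prop:SeS}(iv) tells us that the algebraic semigroup $\kernel(M) = MeM$ has no proper two-sided ideal, so $MzM = \kernel(M)$. Hence $\kernel(M) = \overline{GzG}\subseteq \overline{C} = C$, contradicting $e\notin C$. Thus $C = \emptyset$ and $GeG = \kernel(M)$.

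For uniqueness of the closed orbit, note that $\kernel(M) = GeG$ is itself a $G\times G$-orbit that is closed in $M$. Conversely, if $O$ is any closed $G\times G$-invariant subset of $M$, then by continuity of multiplication and density of $G$ we have $MOM\subseteq \overline{GOG}\subseteq \overline{O} = O$, so $O$ is a two-sided ideal of $M$, and therefore contains $\kernel(M)$. If $O$ happens to be a single $G\times G$-orbit, then since distinct orbits are disjoint and $\kernel(M)$ is itself one orbit, we must have $O = \kernel(M)$. The main subtle step in the whole argument is the upgrade from $\overline{GeG} = \kernel(M)$ to the honest equality $GeG = \kernel(M)$, which rests essentially on the simplicity of the semigroup $\kernel(M)$ established in Proposition \ref{prop:SeS}(iv).
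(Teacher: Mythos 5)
Your proof is correct, but it runs in the opposite direction from the paper's. The paper starts by \emph{choosing} a closed $G \times G$-orbit $Y$ (invoking the general fact that an algebraic group acting on a nonempty variety has a closed orbit), shows $MYM \subseteq \overline{GYG} = Y$ so that $Y$ is a two-sided ideal, and then uses the neat observation that any other two-sided ideal $Z$ is $G\times G$-stable and meets $Y$ (because $YZ \subseteq Y \cap Z$), hence contains the single orbit $Y$; this gives $Y = \kernel(M)$ and the uniqueness in one stroke, with $\kernel(M) = GeG$ then read off from Proposition \ref{prop:SeS}. You instead take the explicit description $\kernel(M) = MeM$ and its simplicity (Proposition \ref{prop:SeS} (iv)--(v)) as the starting point, show by density and continuity that every orbit $GzG$ with $z \in \kernel(M)$ is dense and (being locally closed) open in $\kernel(M)$, and conclude that there is only one such orbit; uniqueness then follows because any closed invariant subset is an ideal. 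Your route avoids appealing to the existence of a closed orbit as a black box, at the price of leaning harder on the structure theory of Proposition \ref{prop:SeS} and of needing the local closedness of orbits plus the dense-to-equal upgrade; the paper's argument is shorter and gets uniqueness essentially for free from the $YZ \subseteq Y \cap Z$ trick. Both are complete; one small point worth making explicit in yours is that $C = \kernel(M) \setminus GeG$ is $G\times G$-stable (it is a union of orbits), which is what lets you conclude $\overline{GzG} \subseteq C$.
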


\begin{proof}
\smartqed
We may choose a closed $G \times G$-orbit $Y$ in $M$. Then
\[ M Y M = \overline{G} Y \overline{G} \subseteq 
\overline{G Y G} = \overline{Y } = Y. \]
Thus, $Y$ is a two-sided ideal of $M$. Moreover, 
if $Z$ is another two-sided ideal, 
then $Z$ is stable by $G \times G$, and meets $Y$ since  
$YZ \subseteq Y \cap Z$. Thus, $Z$ contains $Y$; this shows that
$Y = \kernel(M)$. In particular, $Y$ is the unique closed 
$G \times G$-orbit; this proves the first assertion.
The second one follows from Proposition~\ref{prop:SeS}.
\qed
\end{proof}

\begin{proposition}\label{prop:clo}
Let $M$ be a unit dense algebraic monoid with unit group $G$.
Then the following conditions are equivalent for any $x \in M$:

\begin{enumerate}
\item[{\rm (i)}]{The orbit $Gx$ (for the $G$-action by left multiplication) 
is closed in $M$.}
\item[{\rm (ii)}]{$Gx = Mx$.}
\item[{\rm (iii)}]{$x \in \kernel(M)$.}
\end{enumerate}

\noindent
Moreover, all closed $G$-orbits in $M$ are equivariantly
isomorphic; in other words, the isotropy subgroup schemes $G_x$, 
where $x \in \kernel(M)$, are all conjugate. Also, each closed 
orbit contains a minimal idempotent. For any such idempotent 
$e$, the algebraic group $e M e$ equals $e G e$.
\end{proposition}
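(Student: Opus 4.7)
My plan is to establish the chain \((i) \Rightarrow (ii) \Rightarrow (iii) \Rightarrow (i)\), and then read off the ``moreover'' assertions from the cyclic proof.

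For \((i) \Rightarrow (ii)\), I would invoke unit density: the morphism \(M \to M\), \(s \mapsto sx\), is continuous and sends the dense subset \(G\) into \(Mx\), so \(Gx\) is dense in \(Mx\), whence \(Mx \subseteq \overline{Gx}\). If \(Gx\) is closed this becomes \(Mx \subseteq Gx\); the reverse inclusion is trivial. For \((ii) \Rightarrow (iii)\), I would observe that since \(\kernel(M)\) is a right ideal, \(\kernel(M) \cdot x \subseteq \kernel(M) \cap Mx\); as \(\kernel(M)\) is nonempty, this shows \(Mx \cap \kernel(M) \neq \emptyset\). Under the hypothesis \(Mx = Gx\), there is then \(g \in G\) with \(gx \in \kernel(M)\), and \(x = g^{-1}(gx) \in \kernel(M)\) because \(\kernel(M)\) is also a left ideal.

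The core is \((iii) \Rightarrow (i)\). Fix a minimal idempotent \(e\); by Proposition~\ref{prop:sim}, \(\kernel(M) = GeG\), so I can write \(x = g_1 e g_2\), giving \(Gx = Ge g_2\), a right translate of \(Ge\). Every left \(G\)-orbit in \(\kernel(M)\) arises this way and is a right translate of \(Ge\); in particular all such orbits share a common dimension \(d\). It therefore suffices to show \(Ge\) is closed in \(M\). I would argue by contradiction: if not, then by the standard fact that orbits of an algebraic group action on a variety are locally closed with \(\overline{Ge} \setminus Ge\) a union of orbits of strictly smaller dimension, there exists a \(G\)-orbit \(Gy \subseteq \overline{Ge} \setminus Ge\) with \(\dim Gy < d\). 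But \(Ge \subseteq \kernel(M)\), and \(\kernel(M)\) is closed, so \(Gy \subseteq \kernel(M)\), whence \(\dim Gy = d\), a contradiction. The main obstacle is precisely this appeal to the standard orbit-closure dimension theorem, which I would support with a reference.

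Once the equivalence is in hand, the ``moreover'' assertions follow quickly. Right translation by \(g_2\) gives a \(G\)-equivariant isomorphism \(Ge \to Gx\), and a direct computation yields \(G_x = g_1 G_e g_1^{-1}\), giving the stated conjugacy. For a closed orbit \(Geg\), the element \(g^{-1} e g\) lies in it (take \(g_1 = g^{-1}\)), is idempotent since \((g^{-1}eg)^2 = g^{-1} e^2 g = g^{-1} e g\), and is minimal by Proposition~\ref{prop:SeS}(vi) because it lies in \(\kernel(M)\). Finally, for \(eMe = eGe\): applying the established equivalence to \(x = e\) gives \(Me = Ge\), and the symmetric argument (switching left and right multiplication) gives \(eM = eG\); then for any \(eme \in eMe\), writing \(em = eg\) with \(g \in G\) yields \(eme = (em)e = ege \in eGe\).
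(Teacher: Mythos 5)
Your proof is correct and follows essentially the same route as the paper: the same cyclic chain of implications, the same key step for (iii)$\Rightarrow$(i) (all $G$-orbits in $\kernel(M)=GeG$ are right translates of $Ge$, hence equidimensional, hence closed by the standard orbit-boundary dimension argument), and the same derivation of the conjugacy, the minimal idempotents $g^{-1}eg$, and $eMe=eGe$. The only differences are cosmetic (e.g.\ your nonemptiness argument for (ii)$\Rightarrow$(iii) versus the paper's direct use of $\kernel(M)x\subseteq Gx$).
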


\begin{proof}
\smartqed
(i)$\Rightarrow$(ii) Since $Gx$ is closed in $M$, we have
$M x = \overline{G} x \subseteq \overline{G x} = Gx$
and hence $M x = Gx$.

(ii)$\Rightarrow$(iii) We have $G x = M x \supset \kernel(M)x$
and the latter subset is stable under left multiplication by $G$.
Hence $G x = \kernel(M)x$ is contained in $\kernel(M)$.

(iii)$\Rightarrow$(i) Let $e$ be a minimal idempotent of $M$. 
Since $\kernel(M) = G e G$, the $G$-orbits in $\kernel(M)$ 
are exactly the orbits $G eg$, 
where $g \in G$. Since the right multiplication by $g$ is 
an automorphism of the variety $M$ commuting with left 
multiplications, these orbits are all isomorphic as 
$G$-varieties. In particular, they all have the same dimension;
hence they are closed in $\kernel(M)$, and thus in $M$. 
Also, the orbit $G e g$ contains $g^{-1} e g$, 
which is a minimal idempotent
since the map $M \to M$, $x \mapsto g^{-1} x g$ is an automorphism
of algebraic monoids. Finally, we have $Ge = Me$ by (i);
likewise, $eG = eM$ and hence $eGe = eMe$.
\qed
\end{proof}

Note that the closed orbits for the left $G$-action are exactly
the minimal left ideals (considered in Remark \ref{rem:simple}
(ii) in the setting of algebraic semigroups).

\subsection{The normalization of an algebraic semigroup}
\label{subsec:norm}

In this subsection, we begin by recalling some background
results on the normalization of an arbitrary variety
(see e.g. \cite[\S 4.2, \S 11.2]{Eisenbud}). Then
we discuss the normalization of algebraic semigroups and
monoids; as in the previous subsection, this construction
will play an important role in their structure.

A variety $X$ is \emph{normal} at a point $x$ if the local ring 
$\cO_{X,x}$ is integrally closed in its total quotient ring;
$X$ is normal if it is so at any point. The normal points of 
a variety form a dense open subset, which contains the nonsingular 
points. The irreducible components of a normal variety are pairwise
disjoint, and each of them is normal.

An arbitrary variety $X$ has a \emph{normalization}, i.e., 
a normal variety $\tilde X$ together with a finite surjective 
morphism $\eta : \tilde X \to X$ which satisfies the following
universal property: for any normal variety $Y$ and any morphism 
$\varphi : Y \to X$ which is dominant (i.e., the image of $\varphi$
is dense in $X$), there exists a unique morphism 
$\tilde\varphi : Y \to \tilde X$ such that 
$\varphi = \eta \circ \tilde \varphi$. Then $\tilde X$ is uniquely
determined up to unique isomorphism, and $\eta$ is an isomorphism
above the open subset of normal points of $X$; in particular, 
$\eta$ is birational (i.e., an isomorphism over a dense open 
subset of $X$).

\begin{proposition}\label{prop:norm}
Let $(S,\mu)$ be an algebraic semigroup and let 
$\eta : \tilde S \to S$ be the normalization. 

\begin{enumerate}
\item[{\rm (i)}]{If the morphism $\mu : S \times S \to S$ is dominant, 
then $\tilde S$ has a unique algebraic semigroup law $\tilde \mu$ 
such that $\eta$ is a homomorphism. Moreover, 
$\eta(E(\tilde S)) = E(S)$.}
\item[{\rm (ii)}]{If $S$ is an algebraic monoid (so that $\mu$ is 
surjective), then $\tilde S$ is an algebraic monoid as well, 
with neutral element the unique preimage of $1_S$ under $\eta$. 
Moreover, $\eta$ induces an isomorphism $G(\tilde S) \cong G(S)$.}
\end{enumerate}

\end{proposition}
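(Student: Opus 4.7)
The approach is to lift $\mu$ to $\tilde S$ via the universal property of the normalization, using that $\tilde S \times \tilde S$ is itself a normal variety. This last point is a standard consequence of $k$ being algebraically closed (hence perfect), so that the product of two normal $k$-varieties is again normal.

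For (i), the composition $\mu \circ (\eta \times \eta) : \tilde S \times \tilde S \to S$ has dense image, since $\eta \times \eta$ is surjective and $\mu$ is dominant by hypothesis. Applying the universal property to the normal variety $\tilde S \times \tilde S$ then yields a unique morphism $\tilde \mu : \tilde S \times \tilde S \to \tilde S$ with $\eta \circ \tilde \mu = \mu \circ (\eta \times \eta)$; this identity makes $\eta$ a homomorphism of the underlying multiplications. Associativity of $\tilde \mu$ follows by the same uniqueness: both $\tilde \mu \circ (\tilde \mu \times \id)$ and $\tilde \mu \circ (\id \times \tilde \mu)$ are morphisms $\tilde S^3 \to \tilde S$ from a normal variety, and their compositions with $\eta$ coincide by associativity of $\mu$, forcing them to agree. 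Uniqueness of $\tilde\mu$ itself is built into the universal property. Finally, the equality $\eta(E(\tilde S)) = E(S)$ is immediate from Corollary \ref{cor:idem} applied to the surjective semigroup homomorphism $\eta$.

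For (ii), when $S$ is a monoid the identity $\mu(1_S,\cdot) = \id_S$ shows that $\mu$ is surjective, so (i) applies. The point $1_S$ lies in $G(S)$, which by Theorem \ref{thm:unit} consists of nonsingular points of $S$; in particular $S$ is normal at $1_S$, so $\eta$ is an isomorphism above an open neighborhood of $1_S$, and $\eta^{-1}(1_S)$ reduces to a single point $\tilde 1$. The morphisms $x \mapsto \tilde \mu(\tilde 1, x)$ and $\id_{\tilde S}$ both lift $\eta$ along itself, so they agree by uniqueness of the lift; symmetrically on the right. Thus $\tilde 1$ is the neutral element of $\tilde S$. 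For the unit groups, Theorem \ref{thm:unit} shows that $G(S)$ is open in $S$ and consists of normal points, so $\eta$ restricts to an isomorphism from $U := \eta^{-1}(G(S))$ onto $G(S)$. Every $x \in U$ is invertible in $\tilde S$: lifting $\eta(x)^{-1}$ to some $\tilde y \in U$ gives $\eta(\tilde \mu(x, \tilde y)) = 1_S$, hence $\tilde \mu(x, \tilde y) = \tilde 1$ by uniqueness of the preimage of $1_S$, and the left inverse is obtained similarly. Conversely, if $x \in G(\tilde S)$ then $\eta(x) \in G(S)$ since $\eta$ preserves invertibility, so $x \in U$. Hence $U = G(\tilde S)$ and $\eta$ restricts to an isomorphism $G(\tilde S) \cong G(S)$.

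The only genuinely technical input is the normality of products of normal $k$-varieties; once this is granted, the entire proof is a formal application of the universal property of normalization, with associativity, the neutral element, and the identification of unit groups all extracted from uniqueness of lifts.
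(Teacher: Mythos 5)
Your proof is correct and follows essentially the same route as the paper: lift $\mu$ through the universal property of normalization using normality of $\tilde S \times \tilde S$, deduce associativity, the neutral element, and the unit-group identification from uniqueness of lifts (the paper phrases these via density of the normal locus, which is equivalent), and get the idempotent statement from Corollary \ref{cor:idem}. The only difference is that you spell out the unit-group isomorphism in more detail than the paper, which simply cites the inclusion $G(\tilde S)\subseteq\eta^{-1}(G(S))$ and the fact that $\eta$ is an isomorphism over the nonsingular locus.
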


\begin{proof}
\smartqed
(i) By the assumption on $\mu$, the morphism
$\mu \circ (\eta \times \eta) : \tilde S \times \tilde S \to S$
is dominant. Since $\tilde S \times \tilde S$ is normal, there 
exists a unique morphism 
$\tilde \mu : \tilde S \times \tilde S \to \tilde S$ such that 
$\eta \circ \tilde \mu = \mu \circ (\eta \times \eta)$. 
Then $\tilde \mu$ is associative, since it coincides with $\mu$ 
on the dense open subset of normal points; moreover, $\eta$
is a homomorphism by construction. The assertion on idempotents
is a consequence of Corollary~\ref{cor:idem}.

(ii) The neutral element $1_S$ is a nonsingular point of $S$ 
by Theorem \ref{thm:unit}; thus, it has a unique preimage 
$1_{\tilde S}$ under $\eta$. Moreover, we have for any 
$\tilde x \in \tilde S$:
\[ \eta (\tilde \mu(\tilde x, 1_{\tilde S})) 
= \mu(\eta(\tilde x),\eta(1_{\tilde S}))
= \eta(\tilde x) = \eta(\tilde \mu(1_{\tilde S},\tilde x)). \]
Thus, 
$\tilde \mu(\tilde x,1_{\tilde S}) = 
\tilde \mu(1_{\tilde S}, \tilde x) = \tilde x$
for all $\tilde x$ such that $\eta(\tilde x)$ is a normal point
of $S$. By density of these points, it follows that $1_{\tilde S}$
is the neutral element of $(\tilde S,\tilde \mu)$. Finally, 
the assertion on unit groups follows from the inclusion 
$G(\tilde S) \subseteq \eta^{-1}(G(S))$ and from the fact that $\eta$ 
is an isomorphism above the nonsingular locus of $S$.
\qed
\end{proof}

\begin{remark}\label{rem:norm}
(i) For an arbitrary algebraic semigroup $S$, there may exist
several algebraic semigroup laws on the normalization $\tilde S$ 
that lift $\mu$. For example, let $x \in S$ and consider the 
trivial semigroup law $\mu_x$ of Example \ref{ex:semi} (i). Then 
$\mu_{\tilde x}$ lifts $\mu$ for any $\tilde x \in \tilde X$ such that 
$\eta(\tilde x) = x$. In general, such a point $\tilde x$ is not 
unique, e.g., when $S$ is a plane curve and $x$ an ordinary multiple 
point.

(ii) With the above notation, there may also exist no algebraic 
semigroup law on $\tilde S$ that lifts $\mu$. To construct examples 
of such algebraic semigroups, consider a normal irreducible affine 
variety $X$ and a complete nonsingular irreducible curve $C$, and
choose a finite surjective morphism $\varphi : C \to \bP^1$.
Let $Y := C \setminus \{ \varphi^{-1}(\infty) \}$; then $Y$ is an affine
nonsingular irreducible curve equipped with a finite surjective 
morphism $\varphi : Y \to \bA^1$. Choose a point $x_o \in X$ and let
$\gamma: Y \to X \times Y$, $y \mapsto (x_o,y)$; then $\gamma$
is a section of the second projection $p_2: X \times Y \to Y$.
By \cite[Thm.~5.1]{Ferrand}, there exists a unique irreducible
variety $S$ that sits in a co-cartesian diagram 
\[ \CD
Y @>{\varphi}>> \bA^1 \\
@V{\gamma}VV @V{\iota}VV \\
X \times Y @>{\eta}>> S. \\
\endCD \]
Then $\iota$ is a closed immersion, and $\eta$ is a finite morphism 
that restricts to an isomorphism 
$(X \setminus \{ x_o \}) \times Y \cong S \setminus \iota(\bA^1)$
and to the (given) morphism $\{ x_o \} \times Y \to \bA^1$, 
$(x_o,y) \mapsto \varphi(y)$.
In particular, $\eta$ is the normalization; $S$ is obtained by
``pinching $X \times Y$ along $\{ x_o \} \times Y$ via $\varphi$''.
Since the diagram
\[ \CD
Y @>{\varphi}>> \bA^1 \\
@V{\gamma}VV @V{\id}VV \\
X \times Y @>{\varphi \circ p_2}>> \bA^1 \\
\endCD \]
commutes, it yields a unique morphism $\rho : S \to \bA^1$ 
such that $\rho \circ \iota = \id$ and 
$\rho \circ \eta = \varphi \circ p_2$. The retraction $\rho$ 
defines in turn an algebraic semigroup law $\mu$ on $S$ by
$\mu(s,s') := \iota(\rho(s) \rho(s'))$ as in Example \ref{ex:semi} (ii). 

We claim that $\mu$ does not lift to any algebraic semigroup
law on $X \times Y$, if the curve $C$ is nonrational. Indeed, 
any such lift $\tilde \mu$ satisfies 
\[ \eta(\tilde \mu((x,y),(x',y'))) = \mu(\eta(x,y),\eta(x',y')) \]
\[ = \iota (\rho(\eta(x,y) \rho(\eta(x',y')))) 
= \iota(\varphi(y) \varphi(y')) \]
for any $x,x' \in X$ and any $y,y' \in Y$. 
As a consequence, $\tilde \mu((x,y),(x',y'))$ only depends on 
$(y,y')$, and this yields an algebraic semigroup law on $Y$
such that $\varphi$ is a homomorphism. But such a law does not
exist, as follows e.g. from Theorem \ref{thm:curve} below.
\end{remark}

\section{Irreducible algebraic monoids}
\label{sec:iam}

\subsection{Algebraic monoids with affine unit group}
\label{subsec:amaug}

The aim of this subsection is to prove the following result, 
due to Rittatore for irreducible algebraic monoids
(see \cite[Thm.~5]{Rittatore07}). The proof presented here
follows his argument closely, except for an intermediate step 
(Proposition \ref{prop:bun}).

\begin{theorem}\label{thm:aff}
Let $M$ be a unit dense algebraic monoid, and $G$ its unit group.
If $G$ is affine, then so is $M$.
\end{theorem}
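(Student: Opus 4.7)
The proof plan is to reduce to the irreducible normal case and then construct a faithful finite-dimensional representation of $M$, producing a closed embedding into an affine matrix monoid.

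First, a double reduction. Since $G$ is an algebraic group of finite type, $G/G^o$ is finite and $G^o$ is open and affine in $G$. By Proposition \ref{prop:neut}, the irreducible components of the unit dense monoid $M$ are the translates $gM^o$, $g\in G/G^o$, each isomorphic as a variety to $M^o$, which is itself a unit dense monoid with affine unit group $G^o$. The natural morphism $\bigsqcup_g gM^o \to M$ is then finite and surjective, so by Chevalley's theorem (a noetherian scheme admitting a finite surjective morphism from an affine scheme is affine) it suffices to treat the irreducible case. Likewise, Proposition \ref{prop:norm} provides a finite surjective normalization $\eta:\tilde M\to M$ with $\tilde M$ unit dense and with unit group isomorphic to $G$, so we may further assume $M$ is irreducible and normal.

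Now fix a closed embedding $\rho:G\hookrightarrow GL(V)\subset\End(V)$, possible since $G$ is affine. Viewed as a morphism, $\rho$ is $(G\times G)$-equivariant for the actions $(g,h)\cdot x=gxh^{-1}$ on $M$ and $(g,h)\cdot A=\rho(g)A\rho(h)^{-1}$ on $\End(V)$. Since $G$ is dense open in $M$, $\rho$ is an equivariant rational map $M\dashrightarrow\End(V)$. The intermediate Proposition \ref{prop:bun} referenced in the text is what must supply the crucial extension: $\rho$ extends to a morphism of algebraic monoids $\tilde\rho:M\to\End(V)$. I would expect Proposition \ref{prop:bun} to furnish an equivariant bundle or quotient structure on $M$ that trivializes the extension problem locally.

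Granting the extension, the image of $\tilde\rho$ lies in the closed submonoid $M':=\overline{\rho(G)}\subset\End(V)$, itself an affine algebraic monoid with unit group $\rho(G)\cong G$. The morphism $\tilde\rho:M\to M'$ is birational (it is the identity on $G$) and identifies unit groups; by normality of $M$, affineness of $M'$, and Zariski's Main Theorem, $\tilde\rho$ is an isomorphism. Hence $M\cong M'$ is affine. The main obstacle is precisely the extension step: rational maps to affine targets from normal varieties do not extend in general (in contrast to maps to proper targets), and $G$ is neither assumed reductive nor connected, so standard compactification techniques are unavailable. This is the role of Proposition \ref{prop:bun}, which presumably geometrically unpacks the $(G\times G)$-action on $M$ under the unit density hypothesis in a way that enables the extension.
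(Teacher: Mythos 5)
Your two reductions (to the irreducible case via Proposition \ref{prop:neut} and Chevalley's theorem on finite images of affines, and to the normal case via Proposition \ref{prop:norm}) are correct and match the paper's opening moves. But the core of your argument is a genuine gap: you posit that a faithful representation $\rho : G \hookrightarrow \GL(V)$ extends to a homomorphism of monoids $M \to \End(V)$, and you delegate this to Proposition \ref{prop:bun} — which does not say that. Proposition \ref{prop:bun} states that for a normal irreducible monoid $M$ with affine unit group, the principal $\bG_m$-bundle associated to any line bundle on $M$ carries a monoid structure lifting that of $M$; it says nothing about extending representations. Extending $\rho$ over all of $M$ is essentially equivalent to proving that $M$ is linear, i.e., to the theorem itself (affine $=$ linear for algebraic monoids), so as written your proof assumes what it must prove. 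The paper's actual route is quite different: Sumihiro's theorem (applicable because $G \times G$ acts on $M$ with a unique closed orbit, by Proposition \ref{prop:sim}) gives quasiprojectivity, hence an ample line bundle $L$; the associated $\bG_m$-bundle $X$ is quasi-affine; Proposition \ref{prop:bun} lifts the monoid structure to $X$ (this is where the divisorial bookkeeping on $(M\times M)\setminus(G\times G)$ happens, using that the boundary has pure codimension one because $G$ is affine); Renner's theorem that quasi-affine algebraic monoids are affine then gives that $X$ is affine, and $M = X/\!/\bG_m$ is affine.

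Even granting the extension step, your endgame does not close. Zariski's Main Theorem in the form you invoke requires the \emph{target} to be normal: a birational morphism from a normal variety $M$ to a possibly non-normal affine variety $M' = \overline{\rho(G)}$ need not be an isomorphism (the normalization of a cuspidal cubic is birational, finite, and not an isomorphism), and without properness or finiteness of $\tilde\rho$ you cannot even conclude it is quasi-finite. Nor does the mere existence of a dominant morphism to an affine variety force the source to be affine (consider $\bA^2\setminus\{0\}\to\bA^2$). So both the extension step and the descent from $M'$ back to $M$ need to be replaced by actual arguments.
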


\begin{proof}
\smartqed
Let $\eta : \tilde M \to M$ denote the normalization. Then $\tilde M$
is an algebraic monoid with unit group isomorphic to $G$,
by Proposition \ref{prop:norm}. Moreover, $G$ is dense in $\tilde M$
since it is so in $M$. If $\tilde M$ is affine, then $M$ is affine
by a result of Chevalley: the image of an affine variety by a finite
morphism is affine (see \cite[Exer.~II.4.2]{Hartshorne}). 
Thus, we may assume that $M$ is normal. Then $M$ is the disjoint 
union of its irreducible components, and each of them is isomorphic 
(as a variety) to the neutral component $M^o$ 
(Proposition \ref{prop:neut}). So we may assume in addition 
that $M$ is irreducible.

By Proposition \ref{prop:sim}, the connected algebraic group 
$G \times G$ acts on $M$ with a unique closed orbit. In view of 
a result of Sumihiro (see \cite{Sumihiro}), it follows that $M$ 
is quasiprojective; in other words, there exists a locally closed
immersion $\iota : M \to \bP^n$ for some positive integer $n$.
(We may further assume that $\iota$ is equivariant for some
action of $G$ on $\bP^n$; we will not need that fact in this proof).
Then the pull-back $L := \iota^*O_{\bP^n}(1)$ is an ample line bundle
on $M$. The associated principal $\bG_m$-bundle $\pi : X \to M$
(where $X$ is the complement of the zero section in $L$) is the
pull-back to $M$ of the standard principal $\bG_m$-bundle 
$\bA^{n+1} \setminus \{ 0 \} \to \bP^n$. Thus, $X$ is a locally closed
subvariety of $\bA^{n+1}$, and hence is quasi-affine.

By Proposition \ref{prop:bun} below (a version of 
\cite[Thm.~4]{Rittatore07}), $X$ has a structure of algebraic
monoid such that $\pi$ is a homomorphism. Since that monoid is
quasi-affine, it is in fact affine by a result of Renner 
(see \cite[Thm.~4.4]{Renner84}). Moreover, the map $\pi: X \to M$ is 
the categorical quotient by the action of $\bG_m$; hence $M$ is affine.
\qed
\end{proof}

\begin{proposition}\label{prop:bun}
Let $M$ be a normal irreducible algebraic monoid, and assume that
its unit group $G$ is affine. Let $\varphi : L \to M$ be a line bundle,
and $\pi : X \to M$ the associated principal $\bG_m$-bundle. Then
$X$ has a structure of a normal irreducible algebraic monoid
such that $\pi$ is a homomorphism. 
\end{proposition}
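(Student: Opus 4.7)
The plan is to lift $\mu: M \times M \to M$ to a morphism $\tilde\mu: X \times X \to X$ covering $\mu$ and equivariant for multiplication $\bG_m \times \bG_m \to \bG_m$. Since $\pi: X \to M$ is the principal $\bG_m$-bundle classified by $[L] \in \Pic(M)$, such a lift is the same data as an isomorphism of $\bG_m$-torsors over $M \times M$ between $\mu^* X$ and the contracted product $p_1^* X \wedge^{\bG_m} p_2^* X$, i.e., a trivialization of the line bundle
$$\cL := \mu^* L \otimes p_1^* L^{-1} \otimes p_2^* L^{-1} \in \Pic(M \times M).$$
The space of such trivializations, if non-empty, is a torsor under $\cO(M \times M)^{*}$, so my strategy is first to establish triviality of $\cL$, then to use this ambiguity to normalize $\tilde\mu$ so that it satisfies the monoid axioms.

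The main obstacle is proving $\cL$ is trivial. Two easy inputs are immediate: $\cL|_{M \times \{1\}}$ and $\cL|_{\{1\} \times M}$ are trivial, because $\mu$ restricts to $\id_M$ on each of these slices. To globalize, I would exploit the action of the affine group $G \times G$ on $M$ by left and right translation, whose open dense orbit is $G$. Mumford's linearization theorem, valid because $G$ is affine connected and $M$ is normal, provides a $G \times G$-linearization of some positive power $L^{\otimes n}$; this unpacks to a canonical trivialization of $\cL^{\otimes n}$ on the open $G \times G \subseteq M \times M$, which extends across $M \times M$ by normality (a trivializing section of a line bundle defined on a big open of a normal variety extends uniquely). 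To remove the $n$-th power, I would invoke that $\Pic^0(G) = 0$ for affine connected $G$ together with a Rosenlicht-type sequence, so that classes in $\Pic(M \times M)$ which are trivial on both axes form a torsion-free subgroup. This is the delicate step and the technical heart of the proof.

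Once $\cL$ is trivialized, I obtain a candidate $\tilde\mu$, determined up to a unit in $\cO(M \times M)^{*}$. Associativity follows by rigidity: the two iterates $\tilde\mu \circ (\tilde\mu \times \id_X)$ and $\tilde\mu \circ (\id_X \times \tilde\mu)$ both $\bG_m$-equivariantly lift the associative composition $\mu \circ (\mu \times \id_M) = \mu \circ (\id_M \times \mu)$, so they differ by a global unit on $M^3$; this unit must be $1$ after restriction to the dense open $G^3$, where everything reduces to the known case of the affine group $G$ (whose $\bG_m$-bundle is a central extension uniquely determined by the class $[L|_G]$), hence is $1$ identically. For the neutral element, any lift $\tilde 1 \in \pi^{-1}(1_M)$ produces $\bG_m$-equivariant endomorphisms $\tilde\mu(\tilde 1, -)$ and $\tilde\mu(-, \tilde 1)$ of $X$ covering $\id_M$, each of which is a scalar multiplication; rescaling $\tilde 1$ and choosing the trivialization of $\cL$ symmetrically on the two axes forces both scalars to equal $1$. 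Finally, $X$ is normal and irreducible because $\pi$ is smooth and surjective with irreducible fibers from the normal irreducible base $M$, completing the construction.
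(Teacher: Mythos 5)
Your overall strategy coincides with the paper's: reduce the construction of the lift $\tilde\mu$ to trivializing $\cL = \mu^*L \otimes p_1^*L^{-1}\otimes p_2^*L^{-1}$ on $M\times M$, then get associativity and the neutral element by density of the group locus. However, the step you yourself identify as the technical heart is where the argument breaks down, in two places. First, the extension claim is wrong as stated: since $G$ is affine and $M$ is irreducible, $M\setminus G$ is pure of codimension one, so $(M\times M)\setminus(G\times G)$ is a \emph{divisor}, not a subset of codimension $\geq 2$; a nowhere-vanishing section of $\cL^{\otimes n}$ over $G\times G$ therefore extends only to a rational section whose divisor is supported on the boundary, i.e., you get $\cL^{\otimes n}\cong \cO_{M\times M}(D)$ for some boundary divisor $D$, not a trivialization. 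Second, the claim that classes in $\Pic(M\times M)$ restricting trivially to both axes form a torsion-free subgroup is asserted without proof; $M$ is only normal and quasi-projective here (its affineness is exactly what Theorem \ref{thm:aff} is trying to establish), and I know of no general result of this shape in that setting, nor does ``$\Pic^0(G)=0$'' obviously feed into such a statement about $M\times M$.

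The paper's proof avoids both problems. Instead of linearizing a power of $L$, it uses \cite[Lem.~4.3]{KKV89} to put a group structure on $Y=\pi^{-1}(G)$, i.e., realizes $L\vert_G$ as a central extension of $G$ by $\bG_m$; this yields the isomorphism $p_1^*(L\vert_G)\otimes p_2^*(L\vert_G)\cong\mu_G^*(L\vert_G)$ on the nose, with no $n$-th power to remove. It then confronts the boundary divisor directly: because the components of $(M\times M)\setminus(G\times G)$ are the products $E_i\times M$ and $M\times E_j$, the discrepancy divisor decomposes as $D=p_1^*(D_1)+p_2^*(D_2)$, and restricting to $M\times\{1\}$ and $\{1\}\times M$ (your two ``easy inputs'') shows each $\cO_M(D_i)$ is trivial. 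Those two observations are precisely what is missing from your write-up, and they are not cosmetic: without the product decomposition of $D$, the triviality of $\cL$ on the two axes does not by itself force $\cL$ to be trivial on $M\times M$. The remainder of your argument (associativity via rigidity over the dense open $G^3$, normalization of the neutral element, normality and irreducibility of $X$) is sound and matches the paper's endgame.
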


\begin{proof}
\smartqed
By \cite[Lem.~4.3]{KKV89}, the preimage $Y := \pi^{-1}(G)$ 
has a structure of algebraic group such that the restriction 
of $\pi$ is a homomorphism; we then have an exact sequence 
of algebraic groups
\[ 1 \longrightarrow \bG_m \longrightarrow Y 
\stackrel{\pi}{\longrightarrow} G \longrightarrow 1, \]
where $\bG_m$ is contained in the center of $Y$. Thus, the group 
law $\mu_G : G \times G \to G$ sits in a cartesian square
\[ \CD
Y \times^{\bG_m} Y @>{\mu_Y}>> Y \\
@V{\pi \times \pi}VV @V{\pi}VV \\
G \times G @>{\mu_G}>> G, \\
\endCD \]
where $Y \times^{\bG_m} Y$ denotes the quotient of $Y \times Y$ 
by the action of $\bG_m$ via $t \cdot (y,z) = (ty,t^{-1}z)$,
and $\mu_Y$ stands for the group law on $Y$.
Via the correspondence between principal $\bG_m$-bundles and
line bundles, this translates into a cartesian square 
\[ \CD
p_1^*(L\vert_G) \otimes p_2^*(L\vert_G) @>>> L\vert_G \\
@V{\varphi \times \varphi}VV @V{\varphi}VV \\
G \times G @>{\mu_G}>> G, \\
\endCD \]
where $p_1$, $p_2: G \times G \to G$ denote the projections. 
In other words, we have an isomorphism
\[ p_1^*(L\vert_G) \otimes p_2^*(L\vert_G) 
\stackrel{\cong}{\longrightarrow} \mu_G^*(L\vert_G) \]
of line bundles over $G \times G$. 

Over $M \times M$, this yields an isomorphism
\[ p_1^*(L) \otimes p_2^*(L) \stackrel{\cong}{\longrightarrow} 
\mu^*(L) \otimes \cO_{M \times M}(D), \]
where $D$ is a Cartier divisor with support in 
$(M \times M) \setminus (G \times G)$. Since $G$ is affine, 
the irreducible components $E_1, \ldots, E_n$ of $M \setminus G$ 
are divisors of $M$. Thus, the irreducible components of 
$(M \times M) \setminus (G \times G)$ are exactly the divisors
$E_i \times M$ and $M \times E_j$, where $i,j = 1,\ldots,n$. 
Hence 
\[ D = p_1^*(D_1) + p_2^*(D_2) \] 
for some Weil divisors $D_1$, $D_2$ with support in $M \setminus G$. 
In particular, the pullback of $D$ to $M \times G$ is 
$D_1 \times G$. Since $D$ is Cartier, so is $D_1$; likewise, $D_2$
is Cartier. We thus obtain an isomorphism
\[ p_1^*(L) \otimes p_2^*(L) \stackrel{\cong}{\longrightarrow} 
\mu^*(L) \otimes p_1^*(\cO_M(D_1)) \otimes p_2^*(\cO_M(D_2)) \]
of line bundles over $M \times M$.
We now pull back this isomorphism to $M \times \{ 1 \}$. Note that
$\mu^*(L)\vert_{M \times \{ 1 \}} = L= p_1^*(L)\vert_{M \times \{ 1 \}}$;
also, $p_1^*(\cO_M(D_1)) \vert_{M \times \{ 1 \}} = \cO_M(D_1)$, and both 
$p_2^*(L)\vert_{M \times \{ 1 \}}$, $p_2^*(\cO_M(D_2)) \vert_{M \times 1}$
are trivial. Thus, $\cO_M(D_1)$ is trivial; one shows similarly
that $\cO_M(D_2)$ is trivial. Hence we have in fact an isomorphism
\[ p_1^*(L) \otimes p_2^*(L) \stackrel{\cong}{\longrightarrow} 
\mu^*(L). \] 
As above, this translates into a cartesian square
\[ \CD
X \times^{\bG_m} X @>>> X \\
@V{\pi \times \pi}VV @V{\pi}VV \\
M \times M @>{\mu}>> M. \\
\endCD \]
In turn, this yields a morphism $\nu : X \times X \to X$
which lifts $\mu : M \times M \to M$ and extends the group law
$Y \times Y \to Y$. It follows readily that $\nu$ is associative
and has $1_Y$ as a neutral element.
\qed
\end{proof}

A noteworthy consequence of Theorem \ref{thm:aff} is 
the following sufficient condition for an algebraic monoid 
to be affine, which slightly generalizes 
\cite[Cor.~3.3]{Brion-Rittatore}:

\begin{corollary}\label{cor:zero}
Let $M$ be a unit dense algebraic monoid having a zero element.
Then $M$ is affine.
\end{corollary}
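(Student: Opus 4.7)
The plan is to apply Theorem \ref{thm:aff}: it suffices to show that the unit group $G := G(M)$ is affine. Since $G/G^o$ is finite, this is equivalent to $G^o$ being affine; and by Chevalley's structure theorem for connected algebraic groups, $G^o$ is affine if and only if the abelian variety $A$ in the canonical extension
\[ 1 \longrightarrow L \longrightarrow G^o \stackrel{\pi}{\longrightarrow} A \longrightarrow 1 \]
(with $L$ connected affine, normal in $G^o$) is trivial. I therefore aim to prove $A = 0$.

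First I reduce to a normal irreducible monoid. By unit density and Proposition \ref{prop:neut}, $0 \in g M^o$ for some $g \in G$, whence $0 = g^{-1} \cdot 0 \in M^o$; replacing $M$ by $M^o$, I may assume $M$ is irreducible with unit group $G^o$ and zero $0$. Let $\eta : \tilde M \to M$ be the normalization: by Proposition \ref{prop:norm}, $\tilde M$ is a normal irreducible algebraic monoid with the same unit group $G^o$. The preimage $\eta^{-1}(0)$ is a two-sided ideal of $\tilde M$ (since $\eta(\tilde x \tilde y) = \eta(\tilde x) \cdot 0 = 0$ whenever $\eta(\tilde y) = 0$) and is finite. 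By Proposition \ref{prop:sim}, the kernel $\kernel(\tilde M)$ is the unique closed $G^o \times G^o$-orbit in $\tilde M$ and is contained in this ideal; as the image of the connected variety $G^o \times G^o$ under $(g,h) \mapsto g \tilde e h$ (for $\tilde e$ any minimal idempotent), it is irreducible, so finite together with irreducible forces it to be a single point $\tilde 0$, necessarily a zero element of $\tilde M$.

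Next I extend $\pi$ from $G^o$ to a morphism $\bar\pi : \tilde M \to A$. Since $\tilde M$ is normal and $A$ is an abelian variety, Weil's extension theorem (rational maps from normal varieties to abelian varieties are morphisms) produces such an extension. The morphism
\[ \beta : \tilde M \times \tilde M \longrightarrow A, \quad
(x,y) \longmapsto \bar\pi(xy) - \bar\pi(x) - \bar\pi(y) \]
vanishes on the dense open subset $G^o \times G^o$ because $\pi$ is a group homomorphism, and hence vanishes identically; thus $\bar\pi$ is itself a monoid homomorphism, with $\bar\pi(1) = 0_A$. Applying this to $\tilde 0$ gives $\bar\pi(\tilde 0) = \bar\pi(x \tilde 0) = \bar\pi(x) + \bar\pi(\tilde 0)$ for every $x \in \tilde M$, forcing $\bar\pi(x) = 0_A$. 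In particular $\pi = \bar\pi\vert_{G^o}$ is trivial, $A$ is trivial, and $G$ is affine; Theorem \ref{thm:aff} now gives that $M$ is affine.

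I expect the principal obstacle to be the extension of $\pi$ across the boundary $\tilde M \setminus G^o$, which requires both the normalization step (to produce a normal source on which to invoke Weil) and Weil's theorem itself. The only non-routine preliminary is the verification that $\tilde M$ inherits a zero element from $M$, which goes through the kernel/orbit analysis of Proposition \ref{prop:sim}. Once $\bar\pi$ is in hand, the homomorphism property is immediate by density, and the cancellation argument using the zero concludes.
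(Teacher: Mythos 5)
Your overall strategy --- show that the unit group $G$ is affine and then invoke Theorem \ref{thm:aff} --- is the same as the paper's, and your preliminary reductions are sound: passing to $M^o$, normalizing, and checking that $\kernel(\tilde M)$ is a single point $\tilde 0$ (it is irreducible, being $\tilde G \tilde e \tilde G$ by Proposition \ref{prop:sim}, and finite, being contained in the finite two-sided ideal $\eta^{-1}(0)$) are all correct. The gap is the extension step. Weil's extension theorem says that a rational map from a \emph{nonsingular} variety to an abelian variety is a morphism; this is false for merely normal varieties. The standard counterexample is the affine cone $X \subseteq \bA^3$ over an elliptic curve $E \subseteq \bP^2$: $X$ is normal, its only singular point is the vertex, and the natural map $X \setminus \{0\} \to E$ does not extend across the vertex --- the paper itself invokes exactly this example in Remark \ref{rem:albs} (ii). Normalization buys you normality, not smoothness, so $\bar\pi$ is a priori only defined outside a closed subset of codimension at least $2$, which may well contain the one point $\tilde 0$ where you need it. Worse, your own cancellation argument shows that, in the presence of the zero element, the existence of a globally defined homomorphic extension of $\pi$ is \emph{equivalent} to $A = 0$; so this step carries the entire content of the corollary and cannot be obtained from a general extension principle that does not hold for normal sources.

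For comparison, the paper's proof is a one-liner of a different flavor: $G$ acts faithfully on $M$ by left multiplication and fixes the zero element, and an algebraic group acting faithfully on a variety with a fixed point is affine (\cite[Prop.~2.1.6]{BSU}; roughly, the action on the jets $\cO_{M,0}/\fm^n$ yields linear representations with trivial common kernel, and the anti-affine part of $G$ admits no nontrivial linear representation). If you want to keep your outline, replacing the Weil step by this jet argument at the zero element (which works at $0$ itself and makes the normalization and the passage to $M^o$ unnecessary), or by the observation that Propositions \ref{prop:uni} and \ref{prop:hnaff} applied with $\kernel(M) = \{0\}$ and $G_0 = G$ give $H = G$ affine directly, is the way to close the gap.
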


\begin{proof}
\smartqed
Consider the action of the unit group $G$ on $M$ via left 
multiplication. This action is faithful, and fixes the zero 
element. It follows that $G$ is affine (see e.g. 
\cite[Prop.~2.1.6]{BSU}). Hence $M$ is affine by Theorem 
\ref{thm:aff}.
\qed
\end{proof}

\subsection{Induction of algebraic monoids}
\label{subsec:ind}

In this subsection, we show that any unit dense algebraic monoid
has a universal homomorphism to an algebraic group, and we
study the fibers of this homomorphism.

\begin{proposition}\label{prop:uni}
Let $M$ be a unit dense algebraic monoid, and $G$ its unit group.

\begin{enumerate}
\item[{\rm (i)}]{There exists a homomorphism of algebraic monoids 
$\varphi : M \to \cG(M)$, where $\cG(M)$ is an algebraic group, 
such that every homomorphism of algebraic monoids 
$\psi : M \to \cG$, where $\cG$ is an algebraic group, 
factors uniquely as $\varphi$ followed by a homomorphism 
of algebraic groups $\cG(M) \to \cG$.}
\item[{\rm (ii)}]{We have $\cG(M) = \varphi(M) = \varphi(G) = G/H$, 
where $H$ denotes the smallest normal subgroup scheme 
of $G$ containing the isotropy subgroup scheme $G_x$ for some
$x \in \kernel(M)$.}
\end{enumerate}

\end{proposition}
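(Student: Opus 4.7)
The plan is to set $\cG(M) := G/H$ with $H$ as in (ii), and to construct the universal homomorphism $\varphi$ explicitly as an extension of the quotient $G \to G/H$ via the closed $G$-orbit of a minimal idempotent. First I would check that $H$ is well defined: by Proposition \ref{prop:clo}, the isotropy subgroup schemes $G_x$ for $x \in \kernel(M)$ are all conjugate in $G$, so the smallest normal subgroup scheme of $G$ containing any one of them contains all of them, and is independent of the choice of $x$. Fix once and for all a minimal idempotent $e \in \kernel(M)$ and take $H$ to be the normal closure of $G_e$ in $G$.

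The universal property, assuming existence of $\varphi$, is then formal. For any homomorphism $\psi : M \to \cG$ to an algebraic group, $\psi(e)$ is an idempotent of the group $\cG$ and hence equals $1$; consequently, for every $g \in G_e$ we have $\psi(g) = \psi(g)\psi(e) = \psi(ge) = \psi(e) = 1$, whence $G_e \subseteq \kernel(\psi\vert_G)$. The latter is a normal subgroup scheme of $G$ and therefore contains $H$, so $\psi\vert_G$ factors through $G \to G/H$. Since $G$ is dense in $M$ (unit density) and $\cG$ is separated, $\psi$ is determined by $\psi\vert_G$; hence $\psi$ factors uniquely through any $\varphi$ that extends the quotient homomorphism $G \to G/H$.

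For the construction of $\varphi$, I would invoke Proposition \ref{prop:clo}: since $e \in \kernel(M)$, we have $Me = Ge$, so $\lambda_e : M \to M$, $x \mapsto xe$, takes values in the closed $G$-orbit $Ge$. The orbit map $G \to Ge$ identifies $Ge$ with the scheme-theoretic quotient $G/G_e$; since $G_e \subseteq H$, this descends to a morphism $Ge \to G/H$. Composing, I obtain a morphism of varieties $\varphi : M \to G/H$, and a direct computation shows that its restriction to $G$ is the quotient homomorphism $g \mapsto gH$.

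It remains to check that $\varphi$ is a homomorphism of monoids, which follows by a density argument: the two morphisms $M \times M \to G/H$, $(x,y) \mapsto \varphi(xy)$ and $(x,y) \mapsto \varphi(x)\varphi(y)$, coincide on the open dense subset $G \times G$ (where $\varphi$ agrees with the group homomorphism $G \to G/H$), hence coincide on all of $M \times M$ by separation of $G/H$. The three equalities $\cG(M) = \varphi(M) = \varphi(G) = G/H$ in (ii) are then immediate from surjectivity of $\varphi\vert_G$. The one technical point I would watch is the identification $Ge \cong G/G_e$ in the category of schemes, so that the descent to the possibly non-reduced quotient $G/H$ is literal; this rests on $G_e$ being the full scheme-theoretic isotropy, as already needed in Proposition~\ref{prop:clo}.
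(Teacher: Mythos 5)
Your proposal is correct and follows essentially the same route as the paper's proof: both identify $H$ as the normal closure of the scheme-theoretic isotropy $G_e$ (using Proposition \ref{prop:clo} for well-definedness and for $Me=Ge$), construct $\varphi$ as right multiplication by $e$ landing in the closed orbit $Ge\cong G/G_e$ followed by the projection to $G/H$, and verify the universal property by showing $G_e$ lies in the scheme-theoretic kernel of $\psi\vert_G$ and then extending by density of $G$ in $M$. The only cosmetic difference is that the paper establishes $G_x\subseteq\Kern(\psi\vert_G)$ for every $x\in M$ rather than just for the chosen idempotent, which is not needed.
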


\begin{proof}
\smartqed
We show both assertions simultaneously.
Let $\psi : M \to \cG$ be a homomorphism as in the statement. 
Then $\psi \vert_G$ is a homomorphism of algebraic groups, 
and hence its image is a closed subgroup of $\cG$. 
Since $M$ is unit dense, it follows that $\psi(M) = \psi(G)$. 
Let $K$ be the scheme-theoretic kernel of $\psi \vert_G$.
Then $K$ is a normal subgroup scheme of $G$, 
and $\psi$ induces an isomorphism from $G/K$ to $\psi(G)$; 
we may thus view $\psi$ as a $G$-equivariant homomorphism $M \to G/K$.  
In particular, for any $x \in M$, the map 
$g \mapsto \psi(g \cdot x)$ yields a morphism $G \to G/K$ which is 
equivariant under the action of $G$ by left multiplication, and
invariant under the action of the isotropy subgroup scheme $G_x$ by
right multiplication. Thus, $K$ contains $G_x$; hence $K$ contains $H$,
and $\psi \vert_G$ factors as the quotient homomorphism
$\gamma : G \to G/H$ followed by the canonical homomorphism
$\pi : G/H \to G/K$.

Next, choose $x \in \kernel(M)$; then $Gx = Mx$ by 
Proposition \ref{prop:clo}. Thus, the morphism $M \to M x$, 
$y \mapsto y x$ may be viewed as a morphism $M \to G x \cong G/G_x$. 
Composing with the morphism $G/G_x \to G/H$ induced by the inclusion 
of $G_x$ in $H$, we obtain a morphism $\varphi : M \to G/H$. Clearly, 
$\varphi$ is $G$-equivariant, and $\varphi(1)$ is the neutral element
of $G/H$. Thus, the restriction $\varphi \vert_G$ is the quotient 
homomorphism $\gamma$. By density, $\varphi$ is a homomorphism 
of monoids, and $\psi = \pi \circ \varphi$. So $\varphi$ is the 
desired homomorphism.
\qed
\end{proof}

\begin{remark}\label{rem:uni}
(i) As a consequence of the above proposition, the smallest 
subgroup scheme of $G$ containing $G_x$ is independent of the
choice of $x \in \kernel(M)$. This also follows from the fact 
that the subgroup schemes $G_x$, where $x \in \kernel(M)$, are all
conjugate in $G$ (Proposition \ref{prop:clo}). By that proposition,
we may take for $x$ any minimal idempotent of~$M$.

(ii) As another consequence, 
any irreducible semigroup $S$ has a universal homomorphism
to an algebraic group (in the sense of the above proposition). 
Indeed, choose an idempotent $e$ in $S$, and
consider a homomorphism of semigroups $\psi : S \to \cG$, where
$\cG$ is an algebraic group. Then $\psi(x) = \psi(exe)$ for all 
$x \in S$; moreover, $eSe$ is an irreducible monoid with neutral
element $e$. Thus, there exists a unique homomorphism 
$\pi : \cG(eSe) \to \cG$ such that $\psi(x) = \pi (\phi(exe))$
for all $x \in S$, where $\phi : eSe \to \cG(eSe)$ denotes 
the universal homomorphism. Then we must have
$\pi(\phi(exye))= \pi(\phi(exe) \phi(eye))$ for all 
$x,y \in S$. Let $H$ denote the smallest normal subgroup 
scheme of $\cG(eSe)$ containing the image of the morphism
\[ S \times S \longrightarrow \cG(eSe), \quad
(x,y) \longmapsto \phi(exye) \phi(exe)^{-1} \phi(eye)^{-1}, \]
and let $\varphi : S \to \cG(eSe)/H$ denote the homomorphism
that sends every $x$ to the image of $exe$. Then
$\pi$ factors as $\varphi$ followed by a unique homomorphism
of algebraic groups $\cG(eSe)/H \to \cG$, i.e., $\varphi$
is the desired homomorphism. Note that 
$\varphi : S \to \cG(S)$ is surjective by construction; 
in particular, $\cG$ is connected.
\end{remark}

\begin{proposition}\label{prop:red}
Keep the notation and assumptions of Proposition \ref{prop:uni}.

\begin{enumerate}
\item[{\rm (i)}]{If $H$ is an algebraic group (e.g., if $\charc(k) = 0$), 
then the scheme-theoretic fibers of $\varphi$ are reduced.}
\item[{\rm (ii)}]{If $M$ is normal, then $H$ is a connected 
algebraic group; moreover, the scheme-theoretic fibers of 
$\varphi$ are reduced and irreducible.}
\end{enumerate}

\end{proposition}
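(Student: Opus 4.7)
The plan is to reduce both parts to a structural analysis of the scheme-theoretic fiber $F:=\varphi^{-1}(\bar 1_{\cG(M)})$. Since $\varphi$ is equivariant for the left $G$-action and $G$ acts transitively on $G/H$, every fiber of $\varphi$ is isomorphic to $F$ via left translation, so the reducedness and irreducibility statements for the fibers reduce to those for $F$. Moreover, because $\varphi$ is a monoid homomorphism and $\bar 1$ is idempotent, $F$ is a closed submonoid of $M$; and since $\varphi|_G:G\to G/H$ is a principal $H$-bundle, the scheme-theoretic intersection $F\cap G$ equals $H$, whence the unit group of $F$ is $H$ as a group scheme.

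The core step is the scheme-theoretic identification $M\cong G\times^H F$ of $G$-schemes over $G/H$, where $H$ acts on $G\times F$ on the right by $(g,y)\cdot h=(gh,h^{-1}y)$. This can be established via the explicit isomorphism of $H$-schemes
\[
G\times F \stackrel{\sim}{\longrightarrow} M\times_{G/H}G,\qquad (g,y)\longmapsto(gy,g),
\]
with inverse $(m,g)\mapsto(g,g^{-1}m)$ (well-defined since $\varphi(g^{-1}m)=\bar 1$, so $g^{-1}m\in F$), followed by fppf descent along the principal $H$-bundle $G\to G/H$. For part~(i), when $H$ is a (reduced) algebraic group, $G/H$ is smooth and the projection $G\times^H F\to G/H$ is étale-locally trivial with fiber $F$; so étale-locally $M$ looks like $F\times U$ with $U$ smooth, and reducedness of $M$ (which holds since $M$ is a variety) forces reducedness of $F$.

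For part~(ii), assume additionally that $M$ is normal; via Proposition~\ref{prop:neut} we may reduce to $M$ irreducible. For $e$ a minimal idempotent, the closed orbit $Ge$ is a closed subvariety of the normal variety $M$, and using that closed orbits of connected algebraic groups on normal varieties are normal, $Ge$ is normal. The quotient $G/G_e$ is a smooth scheme (fppf quotient of the smooth group $G$), and the natural $G$-equivariant morphism $G/G_e\to Ge$ is bijective on closed points; normality of $Ge$ together with Zariski's main theorem then yields that this morphism is an isomorphism, which forces $G_e$ to be reduced and, by irreducibility of $Ge$, connected. Hence $H$—the normal closure of $G_e$ in the connected group $G$—is a connected algebraic group, and part~(i) yields reducedness of the fibers. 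Irreducibility of the fibers then follows from $M\cong G\times^H F$: since $M$ is irreducible and $G/H$ is irreducible (quotient of the connected group $G$ by the connected normal subgroup $H$), the $F$-bundle structure forces $F$ itself to be irreducible. The principal obstacle is the reducedness of $G_e$ in the normal case: it requires both the nontrivial fact that the closed orbit $Ge$ is normal and the careful use of Zariski's main theorem to rule out infinitesimal structure on the stabilizer; everything else reduces to the bundle description and faithfully flat descent.
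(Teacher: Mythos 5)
Your part (i) is essentially the paper's argument: the paper forms the base change $X := M\times_{G/H}G$, identifies it with $G\times N$ exactly as in your map $(g,y)\mapsto(gy,g)$, and uses smoothness of $G\to G/H$ (when $H$ is smooth) to transfer reducedness from $M$ to $G\times N$ and hence to the fiber; your \'etale-local trivialization is the same descent step. Part (ii), however, has a genuine gap. You deduce that the scheme-theoretic stabilizer $G_e$ is reduced from the fact that $G/G_e\to Ge$ is an isomorphism onto the reduced closed orbit. But this isomorphism holds for any action of a smooth group over an algebraically closed field and carries no information about the infinitesimal structure of $G_e$: the quotient $G/G_e$ receives a faithfully flat morphism from the smooth scheme $G$ and is therefore reduced (indeed smooth, by homogeneity) no matter what $G_e$ is. For instance, in characteristic $p$ the quotient $\bG_m/\mu_p\cong\bG_m$ is a smooth variety although $\mu_p$ is non-reduced; normality of the orbit and Zariski's main theorem cannot rule this out. (The auxiliary claim that closed orbits in normal varieties are normal is also a red herring: orbits with their reduced structure are homogeneous under $G$, hence smooth, with no hypothesis on $M$ at all.) Note also that the statement only asserts that $H$, the normal subgroup scheme generated by $G_e$, is a connected algebraic group --- not that $G_e$ itself is reduced --- so you are attempting to prove something stronger than needed, by an argument that cannot detect the failure.

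The paper's route for (ii) is quite different and is where normality of $M$ actually enters. One considers the reduced neutral component $H^o_{\red}\subseteq H$, a closed normal subgroup of $G$, and the quotient homomorphism $\epsilon: G\to G/H^o_{\red}$. The closure $\Gamma\subseteq M\times G/H^o_{\red}$ of the graph of $\epsilon$ projects to $M$ by a finite morphism that is an isomorphism over the dense open subset $G$; normality of $M$ forces this projection to be an isomorphism, so $\epsilon$ extends to a homomorphism of monoids $\psi: M\to G/H^o_{\red}$. By the universal property of $\varphi$ from Proposition \ref{prop:uni}, $\psi$ factors through $\varphi$, whence $H\subseteq H^o_{\red}$, i.e., $H=H^o_{\red}$ is a connected algebraic group; reducedness and irreducibility of the fibers then follow from the mechanism of part (i). To repair your proof you would need to replace the stabilizer computation by some such appeal to the universal property, since normality must be brought to bear on $H$ rather than on the orbit.
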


\begin{proof}
\smartqed
(i) Denote by $\gamma : G \to G/H$ the quotient homomorphism and
form the cartesian square
\[ \CD
X @>{\varphi'}>> G \\
@V{\gamma'}VV @V{\gamma}VV \\
M @>{\varphi}>> G/H. \\
\endCD \]
Since $\gamma$ and $\varphi$ are equivariant for the actions of $G$
by left multiplication, $X$ is equipped with a $G$-action such that 
$\gamma'$ and $\varphi'$ are equivariant. Denote by $N$ the 
(scheme-theoretic) fiber of $\varphi'$ at the neutral element $1_G$. 
Then the morphism
\[ G \times N \longrightarrow X, \quad (g,x) \longmapsto g \cdot x \]
is an isomorphism with inverse given by 
$x \mapsto (\varphi'(x), \varphi'(x)^{-1} \cdot x)$. 
Moreover, the fiber of $\varphi'$ at every $g \in G$ is 
$g \cdot N \cong N$. If $H$ is an algebraic group (i.e., 
if $H$ is smooth; this holds when $\charc(k)=0$), then
the morphism $\gamma$ is smooth as well; hence so is $\gamma'$. 
It follows that $X$ is reduced. But $X \cong G \times N$ 
and hence $N$ is reduced. 
If in addition $H$ is connected, then the fibers of $\gamma$ are 
irreducible; hence the same holds for $\gamma'$, and $X$ is 
irreducible. As above, it follows that $N$ is irreducible. 

(ii) Consider the reduced neutral component $H^o_{\red} \subseteq H$; 
then $H^o_{\red}$ is a closed normal subgroup of $G$. 
Moreover, the natural map $\delta : G/H^o_{\red} \to G/H$ 
is a finite morphism and sits in a commutative square
\[ \CD
G @>{\iota}>> M \\
@V{\epsilon}VV @V{\varphi}VV \\
G/H^o_{\red} @>{\delta}>> G/H, \\
\endCD \]
where $\iota$ denotes the inclusion.
Let $\Gamma \subseteq M \times G/H^o_{\red}$ be the closure of the graph 
of $\epsilon$. Then the projection $p_1 : \Gamma \to M$ is a finite 
morphism, and an isomorphism over the dense open subset $G$ of $M$. 
Since $M$ is normal, it follows that $p_1$ is an isomorphism, 
i.e., $\epsilon$ extends to a morphism $\psi : M \to G/H^o_{\red}$.  
As $\epsilon$ is a homomorphism of algebraic groups, $\psi$ must
be a homomorphism of monoids. Thus, $\psi$ factors through $\varphi$, 
and hence $H^o_{\red} = H$. In other words, $H$ is a connected algebraic 
group. 
\qed
\end{proof}

But in general, the scheme-theoretic fibers of the homomorphism
$\varphi : M \to \cG(M)$ are reducible; also, these fibers are 
nonreduced in prime characteristics, as shown by the following:

\begin{example}\label{ex:nonred}
Consider the monoid $\bA^3$ equipped with pointwise multiplication,
and the locally closed subset
\[ M := \{ (x,y,z) ~\vert~ z^n = x y^n ~{\rm and}~ x \neq 0 \}, \]
where $n$ is a positive integer. Then $M$ is an irreducible 
commutative algebraic monoid with unit group 
\[ G = \{ (x,y,z) ~\vert~ z^n = x y^n ~{\rm and}~ z \neq 0 \}, \]
isomorphic to $\bG_m^2$ via the projection $(x,y,z) \mapsto (y,z)$.
Moreover, $\kernel(M) = M e$, where $e := (1,0,0)$ is the unique
minimal idempotent. Since $M$ is commutative, the isotropy subgroup 
scheme $H$ of Proposition \ref{prop:uni}
is just $G_e$; the latter is the scheme-theoretic kernel 
of the homomorphism $x : G \to \bG_m$. Thus,
\[ H \cong \{ (y,z) \in \bG_m^2 ~\vert~ y^n = z^n \} 
\cong \bG_m \times \mu_n, \]
where $\mu_n$ denotes the subgroup scheme of $n$th roots
of unity. The universal homomorphism $\varphi : M \to G/H$ 
is identified to $x: M \to \bG_m$, and this identifies the fiber
of $\varphi$ at $1$ to the submonoid scheme $(y^n = z^n)$ of 
$(\bA^2, \times)$. The latter scheme is reducible when $n \geq 2$, 
and nonreduced when $n$ is a multiple of $\charc(k)$.
\end{example}
 
We keep the notation and assumptions of Proposition \ref{prop:uni},
and denote by $N$ the scheme-theoretic fiber of $\varphi$ at $1$.
Assume in addition that \emph{$H$ is an algebraic group}
(this holds e.g. if $M$ is normal or if $\charc(k)=0$). 
Then $N$ is reduced by Proposition \ref{prop:red};
also, $N$ is a closed submonoid of $M$, containing $H$ and stable 
under the action of $G$ on $M$ by conjugation (via 
$g \cdot x := g x g^{-1}$). Moreover, the map 
\[ \pi : G \times N \longrightarrow M, \quad (g,y) \longmapsto g y \]
is a homomorphism of algebraic monoids, where $G \times N$ is equipped 
with the composition law
\[ (g_1,y_1) \, (g_2,y_2) := (g_1g_2, g_2^{-1}y_1 g_2 y_2) \]
with unit $(1_G,1_N)$ (this defines the \emph{semi-direct product
of $G$ with $N$}). Finally, $\pi$ is the quotient morphism for 
the action of $H$ on $G \times N$ via 
\[ h \cdot (g,y) := (gh^{-1}, hy). \]
In other words, the morphism $\varphi : M \to G/H$ identifies $M$
to the fiber bundle $G \times^H N \to G/H$
associated to the principal $H$-bundle $G \to G/H$ and to the variety 
$N$ on which $H$ acts by left multiplication. We say that the algebraic 
monoid $M$ is \emph{induced} from $N$.

If we no longer assume that $H$ is an algebraic group, then 
$N$ is just a submonoid scheme of $M$, and
the above properties hold in the setting of monoid schemes. 
We now obtain slightly weaker versions of these properties 
in the setting of algebraic monoids.  

\begin{proposition}\label{prop:nred}
Let $M$ be a unit dense algebraic monoid, $G$ its unit group,
$\varphi : M \to G/H$ the universal homomorphism to an algebraic
group, and $N$ the scheme-theoretic fiber of $\varphi$ at $1$.
Denote by $H_{\red}$ (resp. $N_{\red}$) the reduced scheme of $H$ 
(resp. $N$).

\begin{enumerate}
\item[{\rm (i)}]{$H_{\red}$ is a closed normal subgroup of $G$ and 
$N_{\red}$ is a closed submonoid of $M$, stable under
the action of $G$ by conjugation.}
\item[{\rm (ii)}]{$G \times^{H_{\red}} N_{\red}$ is an algebraic monoid, 
and the natural map 
\[ \psi : G \times^{H_{\red}} N_{\red} \longrightarrow M \]
is a finite bijective homomorphism of algebraic monoids.}
\item[{\rm (iii)}]{$N_{\red}$ is unit dense and its unit group is $H_{\red}$.}
\item[{\rm (iv)}]{$\psi$ is birational.}
\end{enumerate}

\end{proposition}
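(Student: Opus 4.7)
The plan is to prove the four parts in order, exploiting throughout the scheme-theoretic induction $M \cong G \times^H N$ established in the discussion preceding the proposition (in the setting of monoid schemes, where $G \times^H N$ may be nonreduced even though $M$ is a variety).

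For (i), since $k$ is algebraically closed (hence perfect), the reduced scheme $H_{\red}$ of the closed normal subgroup scheme $H \subseteq G$ is itself a closed normal subgroup of $G$. Similarly $N_{\red}$ is a reduced closed subvariety of $M$; and since $N_{\red} \times N_{\red}$ is reduced (a product of reduced varieties over a perfect field), the restriction of $\mu$ to $N_{\red} \times N_{\red}$ factors through $N_{\red}$, making it a closed submonoid. Stability of $N_{\red}$ under $G$-conjugation descends from that of $N$ for the same reason.

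For (ii), I define the monoid law on $G \times^{H_{\red}} N_{\red}$ by descending the semi-direct product law $(g_1,y_1)(g_2,y_2) = (g_1 g_2, (g_2^{-1} y_1 g_2) y_2)$ on $G \ltimes N_{\red}$ modulo the free $H_{\red}$-action $h \cdot (g,y) = (gh^{-1}, hy)$; associativity and the unit come from those of $M$, and $\psi([g,y]) := gy$ is a well-defined homomorphism. For bijectivity of $\psi$ on points, injectivity follows from $\varphi(g_i y_i) = \varphi(g_i)$ together with the set-theoretic identity $G \cap N = H = H_{\red}$ (valid since $H$ and $H_{\red}$ have the same $k$-points); surjectivity from the existence, for each $x \in M$, of $g \in G$ with $\varphi(g) = \varphi(x)$, whence $g^{-1}x$ lies in $N(k) = N_{\red}(k)$. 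For finiteness, I identify $G \times^{H_{\red}} N_{\red}$ with $(G \times^{H_{\red}} N)_{\red}$ (both being reduced closed subschemes of $G \times^{H_{\red}} N$ with the same underlying space); then $\psi$ arises from the composite
\[
  (G \times^{H_{\red}} N)_{\red} \hookrightarrow G \times^{H_{\red}} N \longrightarrow G \times^H N \cong M,
\]
where the first arrow is a closed immersion and the second is the finite quotient by the finite group scheme $H/H_{\red}$ (finite because $H$ and $H_{\red}$ have the same dimension). A closed immersion followed by a finite morphism is finite.

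For (iii), any $y \in N_{\red}$ invertible in $N_{\red}$ lies in $G \cap N_{\red} = H_{\red}$, while every element of $H_{\red}$ is a unit; hence $G(N_{\red}) = H_{\red}$. To show unit density, the closure $\overline{H_{\red}}$ in $N_{\red}$ is a closed subvariety, and $\psi$ maps the closed subset $G \times^{H_{\red}} \overline{H_{\red}}$ onto a closed subset of $M$ (as $\psi$ is proper) containing the dense subset $G = \psi(G \times^{H_{\red}} H_{\red})$; thus this closed subset equals $M$. Bijectivity of $\psi$ then forces $G \times^{H_{\red}} \overline{H_{\red}} = G \times^{H_{\red}} N_{\red}$, i.e.\ $\overline{H_{\red}} = N_{\red}$. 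For (iv), $\psi^{-1}(G)$ consists of classes $[(g,y)]$ with $y \in G \cap N_{\red} = H_{\red}$, and the standard isomorphism $G \times^{H_{\red}} H_{\red} \cong G$ via $[(g,h)] \mapsto gh$ exhibits $\psi$ as an isomorphism over the dense open $G \subseteq M$, so $\psi$ is birational.

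The principal obstacle is the finiteness argument in (ii), which requires careful bookkeeping of the distinction between the variety $M$ and the possibly nonreduced scheme $G \times^H N$, together with the finiteness of the group scheme quotient $H/H_{\red}$; once this is in hand, parts (iii) and (iv) follow quickly from the bijectivity and properness of $\psi$.
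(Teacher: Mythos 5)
Your proposal is correct and follows essentially the same route as the paper: both identify $G \times^{H_{\red}} N_{\red}$ with $(G \times^{H_{\red}} N)_{\red}$, deduce finiteness and bijectivity of $\psi$ from the map $G \times^{H_{\red}} N \to G \times^H N \cong M$, and then obtain unit density of $N_{\red}$ and birationality from the homeomorphism property together with the isomorphism $G \times^{H_{\red}} H_{\red} \cong G$ over the dense open subset $G$. The only (immaterial) divergence is that you justify finiteness of $G \times^{H_{\red}} N \to M$ as the quotient by the free action of the finite infinitesimal group scheme $H/H_{\red}$, whereas the paper realizes it as the base change of the finite purely inseparable morphism $G/H_{\red} \to G/H$; these are equivalent observations.
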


\begin{proof}
\smartqed
(i) The assertion on $H_{\red}$ is well-known. That on $N_{\red}$ 
follows readily from the fact that $N$ is a closed submonoid 
scheme of $M$, stable under the $G$-action by conjugation.

(ii) The natural map $G/H_{\red} \to G/H$ is a purely inseparable
homomorphism of algebraic groups, and hence is finite and bijective.
Also, $G\times^{H_{\red}} N$ is the fibered product of
$M = G\times ^H N$ and $G/H_{\red}$ over $G/H$. Thus,
$G \times^{H_{\red}} N$ is a monoid scheme; moreover, the natural 
morphism $G \times^{H_{\red}} N \to M$ is finite, and bijective on
closed points. As 
$G \times^{H_{\red}} N_{\red} = (G \times^{H_{\red}} N)_{\red}$,
this yields our assertions. 

(iii) Since $M$ is unit dense with unit group $G$ and $\psi$ 
is a homeomorphism, we see that $G \times^{H_{\red}} N_{\red}$ 
is unit dense with unit group $G$ as well. It follows that 
$G \times N_{\red}$ is unit dense with unit group $G \times H_{\red}$.
Thus, $H_{\red}$ is the unit group of $N_{\red}$ and is dense there.

(iv) Just note that $\psi$ restricts to the natural isomorphism 
$G \times^{H_{\red}} H_{\red} \stackrel{\cong}{\to} G$; moreover, 
$G \times^{H_{\red}} H_{\red}$ is a dense open subset of
$G \times^{H_{\red}} N_{\red}$.
\qed
\end{proof}

\begin{example}\label{ex:nred}
Assume that $\charc(k) = p > 0$. Consider the monoid
\[ M := \{ (x,y,z) \in \bA^3 ~\vert~ 
z^p = x y^p ~{\rm and}~ x \neq  0 \} \]
relative to pointwise multiplication, as in Example 
\ref{ex:nonred}. Recall from this example that $G \cong \bG_m^2$
and that the universal homomorphism $\varphi : M \to G/H$
is just $x : M \to \bG_m$, with scheme-theoretic fiber $N$ at $1$ 
being the submonoid scheme $(z^p = y^p)$ of $(\bA^2, \times)$.
It follows that $N_{\red} \cong (\bA^1, \times)$, 
$H_{\red} \cong \bG_m$ and 
$G \times^{H_{\red}} N_{\red} \cong \bG_m \times \bA^1$,
where the right-hand side is equipped with pointwise multiplication.
One checks that $\psi : G \times^{H_{\red}} N_{\red} \to M$
is identified with the map $(t,u) \mapsto (t^p, t,u)$.
\end{example}

Returning to the general setting, we now relate the idempotents
and kernel of $M$ with those of $N_{\red}$:

\begin{proposition}\label{prop:small}
Keep the notation and assumptions of Proposition
\ref{prop:nred}.

\begin{enumerate}
\item[{\rm (i)}]{$E(M) = E(N_{\red})$.}
\item[{\rm (ii)}]{The assignment $I \mapsto I \cap N_{\red}$ defines 
a bijection between the two-sided ideals of $M$ and those 
two-sided ideals of $N_{\red}$ that are stable under conjugation 
by $G$. The inverse bijection is given by $J \mapsto GJ$.}
\item[{\rm (iii)}]{We have $\kernel(M) \cap N_{\red} = \kernel(N_{\red})$ 
and $\kernel(M) = G \, \kernel(N_{\red})$.}
\end{enumerate}

\end{proposition}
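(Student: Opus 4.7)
\medskip

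\noindent\textbf{Proof plan for Proposition \ref{prop:small}.}

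For (i), the plan is to use the homomorphism $\varphi : M \to G/H$ from Proposition \ref{prop:uni}. Any $e \in E(M)$ is sent by $\varphi$ to an idempotent of the group $G/H$, hence to the identity; therefore $e$ lies in the set-theoretic fiber $\varphi^{-1}(1)$, which is the underlying variety of $N_{\red}$. Conversely, $N_{\red}$ is a closed submonoid of $M$ by Proposition \ref{prop:nred} (i), so $E(N_{\red}) \subseteq E(M)$ is immediate.

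For (ii), I would first check the two maps are well-defined. If $I$ is a two-sided ideal of $M$, then $I \cap N_{\red}$ is a two-sided ideal of $N_{\red}$; moreover, conjugation by any $g \in G$ preserves $I$ (as $I$ is two-sided) and $N_{\red}$ (by Proposition \ref{prop:nred} (i)), so it preserves $I \cap N_{\red}$. Conversely, given a $G$-stable two-sided ideal $J$ of $N_{\red}$, I would use the surjectivity of $\psi$ (Proposition \ref{prop:nred} (ii)), which gives $M = G \cdot N_{\red}$, to verify that $GJ$ is a two-sided ideal of $M$: for $m = g'n' \in M$ and $gj \in GJ$, write
\[ m \cdot gj = (g'g)\bigl((g^{-1}n'g)\,j\bigr) \in G \cdot (N_{\red} J) \subseteq GJ, \qquad gj \cdot m = (gg')\bigl((g'^{-1} j g')\,n'\bigr) \in G \cdot (J N_{\red}) \subseteq GJ, \]
using $G$-stability of both $N_{\red}$ and $J$ under conjugation, together with the two-sided ideal property of $J$ in $N_{\red}$.

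For the inverse compositions, $G(I \cap N_{\red}) = I$ is easy: writing $x \in I$ as $x = gn$ with $g \in G$, $n \in N_{\red}$, we have $n = g^{-1} x \in I \cap N_{\red}$. The identity $(GJ) \cap N_{\red} = J$ is the crux, and will be the main obstacle. Given $x = gj \in N_{\red}$ with $g \in G$ and $j \in J \subseteq N_{\red}$, apply $\varphi$: both $x$ and $j$ land on $1$ in $G/H$, so $\varphi(g) = 1$, which forces $g \in H_{\red}$. Here Proposition \ref{prop:nred} (iii) is essential: it identifies $H_{\red}$ with the unit group of $N_{\red}$, and in particular gives $H_{\red} \subseteq N_{\red}$. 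Thus $g \in N_{\red}$, and $x = gj \in N_{\red} \cdot J \subseteq J$ since $J$ is a left ideal of $N_{\red}$.

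For (iii), I would argue by minimality using the bijection of (ii). First, $\kernel(N_{\red})$ is $G$-stable because conjugation by each $g \in G$ is an automorphism of the algebraic semigroup $N_{\red}$ and therefore preserves its smallest two-sided ideal. By (ii), $G \cdot \kernel(N_{\red})$ is then a two-sided ideal of $M$, so $\kernel(M) \subseteq G \cdot \kernel(N_{\red})$; intersecting with $N_{\red}$ and using $(G\cdot\kernel(N_{\red})) \cap N_{\red} = \kernel(N_{\red})$ from (ii) gives $\kernel(M) \cap N_{\red} \subseteq \kernel(N_{\red})$. In the other direction, $\kernel(M) \cap N_{\red}$ is a $G$-stable two-sided ideal of $N_{\red}$, hence contains $\kernel(N_{\red})$ by minimality. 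This proves $\kernel(M) \cap N_{\red} = \kernel(N_{\red})$, and applying the inverse bijection $J \mapsto GJ$ yields $\kernel(M) = G \cdot \kernel(N_{\red})$.
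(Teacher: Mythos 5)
Your proof is correct and follows the paper's overall strategy: part (i) via the universal homomorphism $\varphi$, part (ii) via the decomposition $M = G N_{\red}$ coming from Proposition \ref{prop:nred}, and part (iii) by minimality of kernels. The one place you genuinely diverge is inside part (ii), in verifying that $GJ$ is a two-sided ideal and that $(GJ) \cap N_{\red} = J$. The paper argues topologically: it notes that $I := GJ$ is closed in $M$ and that $(GJ)\cap N_{\red} = J$ because $\psi : G \times^{H_{\red}} N_{\red} \to M$ is a homeomorphism, and then deduces $MIM = \overline{G}\,I\,\overline{G} \subseteq \overline{GIG} = \overline{I} = I$ from the density of $G$ in $M$. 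You instead compute algebraically, conjugating inside $G N_{\red}$ to get the ideal property, and recovering $(GJ)\cap N_{\red} = J$ by applying $\varphi$ and using that $H_{\red} = G(N_{\red}) \subseteq N_{\red}$ from Proposition \ref{prop:nred} (iii). Your route is slightly more self-contained and, unlike the closedness argument, applies verbatim to abstract (not necessarily closed) two-sided ideals, which is how ideals are defined in the paper; the paper's route is shorter once one accepts the homeomorphism $\psi$. In part (iii) you derive $\kernel(M) = G\,\kernel(N_{\red})$ from the bijection of (ii) rather than from Proposition \ref{prop:sim} as the paper does, but this is an equally valid and equally short deduction.
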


\begin{proof}
\smartqed
(i) Clearly, $E(N_{\red}) \subseteq E(M)$. Moreover, if $e \in E(M)$ 
then $\varphi(e) = 1_{G/H}$ and hence $e \in N$, i.e., $e \in N_{\red}$.

(ii) Consider a two-sided ideal $I$ of $M$. Then $J := I \cap N_{\red}$ 
is a two-sided ideal of $N_{\red}$, stable under conjugation
by $G$ (since so are $I$ and $N_{\red}$). Moreover, $I = GJ$, since
$M = GN_{\red}$ and $I = GI$. 

Conversely, let $J$ be a two-sided ideal of $N_{\red}$, 
stable under conjugation by $G$. Then $I := GJ$
is closed in $M$ and satisfies $I \cap N_{\red} = J$, as follows
easily from the fact that $\psi : G \times^{H_{\red}} N_{\red} \to M$ 
is a homeomorphism.
Moreover, $GIG = GJG = GJ = I$ by stability of $J$ under conjugation. 
Since $M$ is unit dense and $I$ is closed in $M$, it follows that 
$MIM = I$; in other words, $I$ is a two-sided ideal. 

(iii) Since $N_{\red}$ is stable under conjugation by $G$, so is 
$\kernel(N_{\red})$. In view of (ii), it follows that
$\kernel(M) \cap N_{\red} = \kernel(N_{\red})$. Together with 
Proposition \ref{prop:sim}, this yields 
$\kernel(M) = G \, \kernel(N_{\red}) G = G \, \kernel(N_{\red})$.
\qed
\end{proof}

\begin{remark}\label{rem:neut}
(i) If $N$ is reduced, then any homomorphism of algebraic monoids 
from $N$ to an algebraic group is trivial.

Indeed, let $\kappa : N \to \cG$ be such a homomorphism.
We may assume that $\kappa$ is the universal morphism 
$N \to H/K$ of Proposition \ref{prop:uni}, where $K$ is a normal 
subgroup scheme of $H$. Then the $G$-action on $N$ by conjugation
yields a $G$-action on $H/K$, compatible with the conjugation action
on $H$; thus, $K$ is a normal subgroup scheme of $G$. 
Moreover, the $H$-equivariant morphism $\kappa$ induces 
a $G$-equivariant morphism 
\[ \psi : M = G \times^H N \longrightarrow G \times^H H/K \cong G/K,
\quad (g,y)H \longmapsto (g,\kappa(y))H. \]
Also, $\psi(1)$ is the neutral element of $G/K$, 
since $\kappa(1)$ is the neutral element of $H/K$. 
It follows that $\psi(x y) = \psi(x) \psi(y)$
for all $x \in G$ and $y \in M$, and hence for all $x, y \in M$.
By Proposition \ref{prop:uni}, $\psi$ factors through $\varphi$, 
and hence $K = H$. 

We do not know if the analogous statement holds for $N_{\red}$
when $N$ is nonreduced. 

(ii) Let $M$ be a unit dense algebraic monoid, and $M^o$ 
its neutral component. Then $M^o$ is stable under the action 
of the unit group $G$ by conjugation on $M$; also, $M = G M^o$ 
by Proposition \ref{prop:neut}. Thus, $G \times^{G^o} M^o$ 
is an algebraic monoid, the disjoint union of the irreducible 
components of $M$. Moreover, the map
\[ \varphi : G \times^{G^o} M^o \longrightarrow M, \quad
(g,x)G^o \longmapsto gx \]
is a homomorphism of algebraic monoids, which is readily seen 
to be finite and birational. Hence $\varphi$ is an isomorphism
whenever $M$ is normal. 

For an arbitrary (unit dense) $M$, it follows that $E(M) = E(M^o)$. 
Indeed, $\varphi$ is surjective, and hence restricts to a surjection
$E(G \times^{G^o} M^o) \to E(M)$ by Corollary \ref{cor:idem}.
Also, $E(G \times^{G^o} M^o) = E(M^o)$, since the unique idempotent
of $G/G^o$ is the coset of $1_G$.
\end{remark}

\subsection{Structure of irreducible algebraic monoids}
\label{subsec:siam}

We begin this subsection by presenting some classical results 
on the structure of an arbitrary connected algebraic group $G$. 
By Chevalley's structure theorem, $G$ has a largest connected affine 
normal subgroup $G_{\aff}$; moreover, the quotient group 
$G/G_{\aff}$ is an abelian variety. In other words, $G$ sits in 
a unique exact sequence of connected algebraic groups
\[ 1 \longrightarrow G_{\aff} \longrightarrow G 
\stackrel{\alpha}{\longrightarrow} A \longrightarrow 1, \]
where $G_{\aff}$ is linear, and $A:= G/G_{\aff}$ is an abelian variety. 
This exact sequence is generally nonsplit; yet $G$ has a smallest
closed subgroup $H$ such that $\alpha\vert_H$ is surjective.
Moreover, $H$ is connected, contained in the center of $G$,
and satisfies $\cO(H) = k$. In fact, $H$ is the largest 
closed subgroup of $G$ satisfying the latter property,
which defines the class of \emph{anti-affine} algebraic groups; 
we denote $H$ by $G_{\ant}$. Finally, we have the 
\emph{Rosenlicht decomposition}: $G = G_{\aff} G_{\ant}$, 
and $(G_{\ant})_{\aff}$ is the connected neutral component
of the scheme-theoretic intersection $G_{\aff} \cap G_{\ant}$.
In other words, we have an isomorphism of algebraic groups
\[ G \cong (G_{\aff} \times G_{\ant})/(G_{\ant})_{\aff}, \]
and the quotient group scheme
$(G_{\aff} \cap G_{\ant})/(G_{\ant})_{\aff}$ is finite. We refer to
\cite{BSU} for an exposition of these results and of further 
developments.

We shall obtain a similar structure result 
for an arbitrary irreducible algebraic monoid; then the unit 
group is a connected algebraic group by Theorem \ref{thm:unit}.
Our starting point is the following:

\begin{proposition}\label{prop:hnaff}
Let $M$ be an irreducible algebraic monoid, $G$ its unit group, 
$\varphi : M \to \cG(M) = G/H$ the universal homomorphism to
an algebraic group, and $N$ the scheme-theoretic fiber of
$\varphi$ at $1$. Then $H$ and $N$ are affine.
\end{proposition}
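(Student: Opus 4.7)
The plan is to first prove that $H$ is affine, after which $N$ will be affine by applying Theorem~\ref{thm:aff} to the unit dense algebraic monoid $N_{\red}$ (which has unit group $H_{\red}$ by Proposition~\ref{prop:nred}(iii)), together with the fact that affineness is insensitive to nilpotent thickenings. Fix a minimal idempotent $e$ of $M$; by Proposition~\ref{prop:uni}(ii), $H$ is the smallest normal subgroup scheme of $G$ containing the isotropy subgroup scheme $G_e$.

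The crucial step is to show that $G_e$ is affine. Since $G_e$ fixes $e$ scheme-theoretically, it acts on every infinitesimal neighborhood $M^{(n)} := \Spec(\cO_{M,e}/\fm_e^{n+1})$, yielding morphisms of group schemes $\theta_n : G_e \to \Aut(M^{(n)})$ whose targets are affine algebraic groups (as $M^{(n)}$ is a finite-dimensional $k$-scheme). I claim that the descending chain of closed subgroup schemes $\Kern(\theta_n) \subseteq G_e$ has trivial intersection: if $g$ is an $R$-valued point of $G_e$ lying in $\Kern(\theta_n)$ for every $n$, then the $R$-morphism $L_g : M_R \to M_R$ given by left multiplication agrees with the identity on the formal neighborhood of $e$ in $M_R$, and hence on a Zariski open neighborhood of $e$ (two morphisms inducing the same map of completed local rings agree on an open subscheme), so on all of $M_R$ by irreducibility; evaluating at $1 \in M_R$ then gives $g = g \cdot 1 = 1$. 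The noetherian property of $G_e$ ensures that this chain stabilizes at some step $N$, so $G_e$ embeds as a closed subgroup scheme of the affine group $\Aut(M^{(N)})$, proving $G_e$ affine.

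Chevalley's structure theorem now gives an exact sequence $1 \longrightarrow G_{\aff} \longrightarrow G \stackrel{\alpha}{\longrightarrow} A \longrightarrow 1$ with $G_{\aff}$ connected affine and $A$ an abelian variety. Since any homomorphism from a connected affine algebraic group to an abelian variety is trivial, every connected affine subgroup of $G$ lies in $G_{\aff}$; applied to $G_e^o$, this forces $\alpha(G_e)$ to be a finite subgroup scheme of $A$. Because $A$ is commutative, every conjugate $g G_e g^{-1}$ has the same image $\alpha(G_e)$ in $A$, so $\alpha(H)$ coincides with $\alpha(G_e)$ and is therefore finite, which gives $H^o \subseteq \Kern(\alpha) = G_{\aff}$; hence $H$ is affine, and the assertion for $N$ follows as noted above.

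The main obstacle is establishing that the intersection $\bigcap_n \Kern(\theta_n)$ is trivial, in particular handling the possibly non-reduced scheme structure of $G_e$ in positive characteristic. The argument hinges on the principle that two morphisms of an irreducible variety which agree on the formal neighborhood of a single point must agree on the whole variety, combined with the manifest faithfulness of the left-multiplication action of $G$ on $M$ (the action of $g$ sends $1$ to $g$).
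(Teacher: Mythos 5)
Your overall architecture coincides with the paper's: prove that the isotropy subgroup scheme $G_e$ is affine, deduce via Chevalley's exact sequence and the commutativity of $A = G/G_{\aff}$ that $\alpha(H) = \alpha(G_e)$ is finite, conclude that $H$ is affine, and then get $N$ from Theorem~\ref{thm:aff} applied to $N_{\red}$ together with the fact that affineness only depends on the reduction. The one genuinely new ingredient is that you prove the affineness of $G_e$ from scratch, by embedding it (after the chain of kernels stabilizes) into $\Aut$ of an infinitesimal neighborhood of the fixed point $e$; the paper simply cites \cite[Cor.~2.1.9]{BSU} for this, and your argument is essentially the standard proof of that citation. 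Your phrasing of the faithfulness step is a little loose ($M_R$ need not be irreducible for a general test ring $R$; one should instead note that the equalizer of $L_g$ and the identity is a closed subscheme containing a schematically dense open set of the form $K\times V$ with $V$ open dense in the reduced irreducible $M$), but the idea is correct and repairable.

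There is, however, a genuine error in the Chevalley step, and it occurs precisely in the positive-characteristic situation you set out to handle. The claims ``every connected affine subgroup of $G$ lies in $G_{\aff}$, applied to $G_e^o$'' and ``$\alpha(H)$ finite gives $H^o\subseteq\Kern(\alpha)=G_{\aff}$'' are false for non-smooth group schemes: a finite subgroup scheme of an abelian variety can have a nontrivial (infinitesimal) connected component, so a connected affine subgroup \emph{scheme} of $G$ need not lie in $G_{\aff}$. The paper's own Example~\ref{ex:maff} with $n = p = \charc(k)$ is a counterexample: there $H\cong\mu_p\times\bG_m$ sits in $G = A\times\bG_m$ with $G_{\aff}=\bG_m$, and $H^o\supseteq\mu_p$ is not contained in $G_{\aff}$ even though $\alpha(H)=\mu_p$ is finite. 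The correct deductions, which the paper makes, are: $\alpha(G_e)$ is affine (as the image of an affine group scheme) and proper (as a closed subgroup scheme of the abelian variety $A$), hence finite; and then $H$ is an extension of the finite group scheme $\alpha(H)$ by the affine group scheme $H\cap G_{\aff}=\Kern(\alpha\vert_H)$, hence affine — without any appeal to $H^o\subseteq G_{\aff}$. With this local repair your proof goes through.
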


\begin{proof}
\smartqed
Recall from Proposition \ref{prop:uni} that $H$ is the normal 
subgroup scheme of $G$ generated by $G_x$, where $x$ 
is an arbitrary point of $\kernel(M)$. Since $G_x$ is the isotropy 
subgroup scheme of a point for a faithful action of $G$ 
(the action on $M$ by left multiplication), it follows that $G_x$ is affine 
(see e.g. \cite[Cor.~2.1.9]{BSU}). The image of $G_x$ under the 
homomorphism $\alpha : G \to A$ is affine (as the image of an affine 
group scheme by a homomorphism of group schemes) and proper 
(as a subgroup scheme of the abelian variety $G/G_{\aff}$), 
hence finite. But $\alpha(G_x) = \alpha(H)$ by the definition 
of $H$ and the commutativity of $A$; hence $\alpha(H)$ is finite. 
Also, the kernel of the homomorphism $\alpha\vert_H$ is a subgroup
scheme of $G_{\aff}$, and hence is affine. Thus, the reduced scheme
$H_{\red}$ is an extension of a finite group by an affine algebraic
group, and hence is affine. Thus, so is $N_{\red}$ in view of
Theorem \ref{thm:aff} and of Proposition \ref{prop:nred}. 
It follows that $N$ is affine, by \cite[Exer.~III.3.1]{Hartshorne}).
\qed
\end{proof}

\begin{remark}\label{rem:triv}
If $\charc(k) = 0$, then $N$ is reduced and any 
homomorphism from $N$ to an algebraic group is trivial
by Remark \ref{rem:neut} (i). If in addition $N$ is
irreducible (e.g., if $M$ is normal), then $\kernel(N)$
is generated by the minimal idempotents. Indeed, the unit 
group $H$ of $N$ is generated by the conjugates of the
isotropy group $H_e$ for some idempotent 
$e \in \kernel(N)$, by Proposition \ref{prop:uni} (ii).
So the assertion follows from \cite[Thm.~2.1]{Putcha06}.
\end{remark}

A noteworthy consequence of Proposition \ref{prop:hnaff}
is the following:

\begin{corollary}\label{cor:qp}
Any irreducible algebraic monoid is quasiprojective.
\end{corollary}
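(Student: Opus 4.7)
My plan is to realize the universal homomorphism $\varphi\colon M\to\cG(M)$ of Proposition \ref{prop:uni} as an affine morphism of finite type over a quasi-projective target, and then appeal to a standard inheritance principle.

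Since $M$ is irreducible, its unit group $G$ is connected by Proposition \ref{prop:neut}, so $\cG(M)=G/H$ is a connected algebraic group. By Chevalley's structure theorem, recalled at the start of Subsection \ref{subsec:siam}, $\cG(M)$ sits in an exact sequence $1\to\cG(M)_{\aff}\to\cG(M)\to A\to 1$ with $A$ an abelian variety and $\cG(M)_{\aff}$ affine. Since $A$ is projective and this sequence exhibits $\cG(M)$ as affine over $A$, the algebraic group $\cG(M)$ is quasi-projective.

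Next, the discussion following Proposition \ref{prop:nred} identifies $M$ scheme-theoretically with the homogeneous fiber bundle $G\times^H N$, where $N=\varphi^{-1}(1)$ is the scheme-theoretic fiber. By Proposition \ref{prop:hnaff}, both $H$ and $N$ are affine. Thus $G\to G/H$ is an $H$-torsor whose structure group scheme is affine, hence an affine morphism; pulling back $\varphi$ along this fppf cover trivializes the bundle into the projection $G\times N\to G$, which is affine because $N$ is. Since affineness of a morphism is fppf-local on the target, $\varphi\colon M\to\cG(M)$ is itself an affine morphism of finite type.

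Finally, $M$ is affine of finite type over the quasi-projective variety $\cG(M)$; we conclude that $M$ is quasi-projective. The main technical point is this last inheritance, which is classical: writing $M=\Spec_{\cG(M)}(\varphi_*\cO_M)$, one uses that $\cG(M)$ is quasi-compact together with an ample line bundle to produce a coherent subsheaf of $\varphi_*\cO_M$ generating it as an $\cO_{\cG(M)}$-algebra; this realizes $M$ as a closed subscheme of a vector bundle over $\cG(M)$, which is open in a projective bundle and therefore quasi-projective. The preceding steps are a direct synthesis of Propositions \ref{prop:neut}, \ref{prop:uni}, \ref{prop:nred}, and \ref{prop:hnaff} with Chevalley's theorem.
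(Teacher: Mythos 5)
Your argument is correct and follows essentially the same route as the paper: the paper likewise observes that $\varphi$ is affine because $M = G \times^H N$ with $N$ affine (Proposition \ref{prop:hnaff}), that $G/H$ is quasiprojective as an algebraic group, and concludes. You simply spell out the standard facts the paper cites or leaves implicit (descent of affineness along the bundle, quasiprojectivity of algebraic groups via Chevalley, and the inheritance of quasiprojectivity under affine morphisms of finite type), all of which are fine; the only nitpick is that connectedness of $G$ comes from Theorem \ref{thm:unit} (openness of $G$ in the irreducible $M$) rather than Proposition \ref{prop:neut}.
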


\begin{proof}
\smartqed
With the notation of the above proposition, the morphism
$\varphi$ is affine, since $M = G \times^H N$ where $N$ is affine. 
Moreover, $G/H$ is quasiprojective since so is any 
algebraic group (see e.g. \cite[Prop.~3.1.1]{BSU}). 
Thus, $M$ is quasiprojective as well.
\qed
\end{proof}

Another consequence is a version of Chevalley's structure
theorem for an irreducible algebraic monoid; it generalizes
\cite[Thm.~1.1]{Brion-Rittatore}, where the monoid is assumed
to be normal.

\begin{theorem}\label{thm:maff}
Let $M$ be an irreducible algebraic monoid, $G$ its unit group,
and $M_{\aff}$ the closure of $G_{\aff}$ in $M$.

\begin{enumerate}
\item[{\rm (i)}]{$M_{\aff}$ is an irreducible affine algebraic monoid
with unit group $G_{\aff}$.}
\item[{\rm (ii)}]{The action of $G_{\aff}$ on $M_{\aff}$ extends to an
action of $G = G_{\aff} G_{\ant}$, where $G_{\ant}$ acts trivially.}
\item[{\rm (iii)}]{The natural map 
$G_{\ant} \times^{G_{\ant} \cap G_{\aff}} M_{\aff} \to 
G \times^{G_{\aff}} M_{\aff}$ 
is an isomorphism of irreducible algebraic monoids. 
Moreover, the natural map 
\[ \kappa: G \times^{G_{\aff}} M_{\aff} \to M \]
is a finite birational homomorphism of algebraic monoids.}
\item[{\rm (iv)}]{$E(M) = E(M_{\aff})$ and 
$\kernel(M) = G \, \kernel(M_{\aff})$.}
\item[{\rm (v)}]{ $M$ is normal if and only if $M_{\aff}$ is normal 
and $\kappa$ is an isomorphism. Then the assignment
$I \mapsto I \cap M_{\aff}$ defines a bijection between
the two-sided ideals of $M$ and those of $M_{\aff}$; the
inverse bijection is given by $J \mapsto GJ$. In particular,
$\kernel(M) \cap M_{\aff} = \kernel(M_{\aff})$.}
\end{enumerate}

\end{theorem}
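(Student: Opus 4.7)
The strategy is to leverage the universal homomorphism $\varphi : M \to G/H$ from Proposition~\ref{prop:uni} together with the affineness of $H$ and of the fiber $N := \varphi^{-1}(1)$ from Proposition~\ref{prop:hnaff}. The crucial preliminary observation is that $H_{\red}$ is an affine normal subgroup of the connected group $G$, so $H_{\red} \subseteq G_{\aff}$; consequently, $N_{\red} = \overline{H_{\red}}^M \subseteq \overline{G_{\aff}}^M = M_{\aff}$. This inclusion is what bridges the finite bijective cover $\psi : G \times^{H_{\red}} N_{\red} \to M$ of Proposition~\ref{prop:nred} and the desired structure $M \cong G \times^{G_{\aff}} M_{\aff}$.

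For (i), $M_{\aff}$ is a closed irreducible submonoid by continuity of $\mu$ and connectedness of $G_{\aff}$. Its unit group $G(M_{\aff})$ contains $G_{\aff}$, is open in $M_{\aff}$ by Theorem~\ref{thm:unit}, and hence has dimension $\dim G_{\aff}$; as a subgroup of $G$ containing the connected $G_{\aff}$ of equal dimension, its neutral component is $G_{\aff}$, and since $M_{\aff}$ is irreducible, Proposition~\ref{prop:neut} forces $G(M_{\aff})$ to be connected, hence equal to $G_{\aff}$. Affineness then follows from Theorem~\ref{thm:aff}. For (ii), $G$ normalizes its normal subgroup $G_{\aff}$, and hence by continuity the closure $M_{\aff}$; the $G_{\ant}$-action is trivial because $G_{\ant}$ lies in the center of $G$ (Rosenlicht) and $G$ is dense in $M$.

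For (iii), the bundle isomorphism translates the Rosenlicht identity $G_{\ant}/(G_{\ant} \cap G_{\aff}) \cong G/G_{\aff}$. To analyze $\kappa$, I would first establish the key identity $M_{\aff} = G_{\aff} \cdot N_{\red}$: for $x \in M_{\aff}$, write $x = g y$ with $g \in G$ and $y \in N_{\red}$ (using $M = G N_{\red}$), then apply the composite homomorphism $\alpha_M : M \stackrel{\varphi}{\to} G/H \to A := G/G_{\aff}$---which vanishes on $M_{\aff}$ by density of $G_{\aff}$---to force $g \in G_{\aff}$. The same $\alpha_M$-argument yields injectivity of $\kappa$; surjectivity is immediate from $M = G N_{\red} \subseteq G M_{\aff}$, and birationality from the fact that $\kappa|_{G \times^{G_{\aff}} G_{\aff}}$ is the identity on $G$. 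The main obstacle will be finiteness of $\kappa$: here I would factor $\psi = \kappa \circ \rho$, where $\rho : G \times^{H_{\red}} N_{\red} \to G \times^{G_{\aff}} M_{\aff}$ is the natural morphism; the identity $M_{\aff} = G_{\aff} N_{\red}$ makes $\rho$ surjective, and since $\psi$ is finite by Proposition~\ref{prop:nred}, finiteness descends along the surjection $\rho$ to give finiteness of $\kappa$.

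For (iv), $E(M) = E(M_{\aff})$ follows from the sandwich $E(N_{\red}) \subseteq E(M_{\aff}) \subseteq E(M)$ combined with $E(M) = E(N_{\red})$ (Proposition~\ref{prop:small}(i)). The kernel identity uses a common minimal idempotent $e$ (the posets $E(M)$ and $E(M_{\aff})$ coincide), Proposition~\ref{prop:sim} to write $\kernel(M) = G e G$ and $\kernel(M_{\aff}) = G_{\aff} e G_{\aff}$, and centrality of $G_{\ant}$ in $M$ (from (ii)) to simplify $G e G = G_{\aff} G_{\ant} e G = G_{\aff} e G = G \cdot \kernel(M_{\aff})$. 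For (v), if $\kappa$ is an isomorphism and $M_{\aff}$ is normal, then $M$ is a fiber bundle with normal fiber over the normal base $A$, hence normal; conversely, if $M$ is normal, the natural finite birational morphism $G \times^{G_{\aff}} \widetilde{M_{\aff}} \to M$ (with $\widetilde{M_{\aff}}$ the normalization of $M_{\aff}$) must be an isomorphism by Zariski's main theorem, which forces both $M_{\aff}$ to be normal and $\kappa$ to be an isomorphism. The ideal bijection then follows because, via $M \cong G \times^{G_{\aff}} M_{\aff}$, any two-sided ideal of $M_{\aff}$ is automatically stable under conjugation by $G_{\aff} \subseteq M_{\aff}$ and hence by all of $G$ via (ii), mirroring the argument of Proposition~\ref{prop:small}(ii).
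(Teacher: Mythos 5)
Your overall architecture matches the paper's (use $\varphi: M \to G/H$, the affineness of $H$ and $N$, and a finite cover of $M$ built from $N_{\red}$ to get finiteness of $\kappa$), but your ``crucial preliminary observation'' is false, and the proof of (iii) collapses with it. You claim that $H_{\red}$, being an affine normal subgroup of the connected group $G$, is contained in $G_{\aff}$. But $G_{\aff}$ is only the largest \emph{connected} affine normal subgroup; a disconnected affine normal subgroup need not lie in $G_{\aff}$, because its component group can embed into the abelian variety $G/G_{\aff}$. Example \ref{ex:maff} of the paper is exactly such a case: there $G = A \times \bG_m$ and $H \cong \mu_n \times \bG_m$ with $\mu_n \subseteq A$, so $H_{\red} \not\subseteq G_{\aff} = \bG_m$, and $N_{\red}$ (a union of $n$ lines) is \emph{not} contained in $M_{\aff}$ (a single line). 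Consequently your key identity $M_{\aff} = G_{\aff} N_{\red}$ fails ($G_{\aff}N_{\red} = N_{\red} \supsetneq M_{\aff}$ there), your map $\rho : G \times^{H_{\red}} N_{\red} \to G \times^{G_{\aff}} M_{\aff}$ is not even defined, and the factorization $\psi = \kappa \circ \rho$ on which your finiteness argument rests does not exist. (Your auxiliary map $G/H \to G/G_{\aff}$ also does not exist unless $H \subseteq G_{\aff}$; the natural target is $G/G_{\aff}H$, which only yields $g \in G_{\aff}H$, not $g \in G_{\aff}$.) The paper's fix is precisely to pass to the neutral component: $H^o_{\red}$ \emph{is} connected, affine and normal, hence contained in $G_{\aff}$, so $N^o_{\red} = \overline{H^o_{\red}} \subseteq M_{\aff}$, and the finite surjective map $\gamma: G \times^{H^o_{\red}} N^o_{\red} \to M$ factors as $\kappa \circ \beta$ with $\beta$ finite and surjective (surjectivity of the relevant restriction onto $M_{\aff}$ coming from $M_{\aff} = \overline{G_{\aff}}$ and closedness of the image, not from an equality $M_{\aff} = G_{\aff}N^o_{\red}$ known in advance); finiteness of $\kappa$ then descends as you intended.

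Two further points. First, your claim that ``the same $\alpha_M$-argument yields injectivity of $\kappa$'' is outright false: in Example \ref{ex:maff} the map $\kappa$ pinches the subscheme $\mu_n \times \{0\}$ to a single point, so $\kappa$ is finite, surjective and birational but not injective (and the theorem does not assert injectivity). Second, in (iv) your inclusion $E(N_{\red}) \subseteq E(M_{\aff})$ again rests on the false containment $N_{\red} \subseteq M_{\aff}$; the paper instead lifts an idempotent $e \in E(M)$ through the surjection $\kappa$ (Corollary \ref{cor:idem}) and observes that its image in the group $G/G_{\aff}$ must be the neutral element, forcing the lift into $M_{\aff}$. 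Parts (i), (ii), the kernel identity in (iv), and (v) are essentially in line with the paper's arguments and are fine modulo the repairs above.
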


\begin{proof}
\smartqed
(i) Clearly, $M_{\aff}$ is an irreducible submonoid of $M$, and
$G(M_{\aff})$ contains $G_{\aff}$ as an open subgroup. Since
$G(M_{\aff})$ is connected, it follows that $G(M_{\aff}) = G_{\aff}$. 
Hence $M_{\aff}$ is affine by Theorem \ref{thm:aff}.

(ii) follows readily from the Rosenlicht decomposition:
since $G_{\aff} \cap G_{\ant}$ is contained in the center
of $G_{\aff}$, its action on $M_{\aff}$ by conjugation
is trivial. Thus, the $G_{\aff}$-action by conjugation
on $M_{\aff}$ extends to an action of 
$G \cong (G_{\aff} \times G_{\ant})/(G_{\aff} \cap G_{\ant})$,
where $G_{\ant}$ acts trivially. 

(iii) The first assertion follows from the Rosenlicht 
decomposition again, since that decomposition yields
an isomorphism 
$G \cong G_{\ant} \times^{G_{\ant} \cap G_{\aff}} G_{\aff}$
of principal $G_{\aff}$-bundles over 
$G/G_{\aff} \cong G_{\ant}/(G_{\ant} \cap G_{\aff})$.
For the second assertion, note first that $\kappa$ restricts 
to the natural isomorphism $G \times^{G_{\aff}} G_{\aff} \to G$, 
and hence is a birational homomorphism of algebraic monoids.
To show that $\kappa$ is finite, we use the isomorphism 
$M \cong G \times^H N$ of Subsection \ref{subsec:ind}. Here
$H$ and $N$ are affine by Proposition \ref{prop:hnaff}; also, 
the natural map $G \times^{H_{\red}} N_{\red} \to M$ is finite and 
bijective by Proposition \ref{prop:nred}. It follows that 
the analogous map 
\[ \gamma : G \times^{H^o_{\red}} N_{\red}^o \longrightarrow M \] 
is finite and surjective. 
But $H^o_{\red} \subseteq G_{\aff}$ since $H$ is affine. Thus, 
\[ G \times^{H^o_{\red}} N_{\red}^o \cong 
G \times^{G_{\aff}} (G_{\aff} \times^{H^o_{\red}} N_{\red}^o). \]
Moreover, $N_{\red}^o \subseteq M_{\aff}$, since $N_{\red}^o$ 
is the closure in $M$ of $H^o_{\red} \subseteq G_{\aff}$; hence 
$G_{\aff} N_{\red}^o \subseteq M_{\aff}$. So $\gamma$ factors as
the natural map
\[ \beta: G \times^{G_{\aff}} (G_{\aff} \times^{H^o_{\red}} N_{\red}^o)
\to G \times^{G_{\aff}} M_{\aff} \]
(induced from the map
$\delta : G_{\aff} \times^{H^o_{\red}} N_{\red}^o \to M_{\aff}$),
followed by $\kappa$. Now $\delta$ is the restriction of $\gamma$
to a closed subvariety, and hence is finite; thus, its image
$G_{\aff} N_{\red}^o$ is closed in $M_{\aff}$. But 
$M_{\aff} = \overline{G_{\aff}}$, and hence $\delta$ is surjective.
Hence $\beta$ is finite and surjective.
Since $\gamma = \kappa \circ \beta$, it follows that $\kappa$
is finite and surjective as well.

(iv) Let $e \in E(M)$. By Corollary \ref{cor:idem}, we may lift $e$
to some idempotent $f$ of $G \times^{G_{\aff}} M_{\aff}$. 
Then the image of $f$ in $G/G_{\aff}$ is the neutral element, 
and hence $f \in M_{\aff}$ so that $e \in E(M_{\aff})$. The converse
is obvious.

Next, choose $e$ minimal. Then $\kernel(M) = GeG$ by Proposition
\ref{prop:sim}, and hence $\kernel(M) = G (G_{\aff} e G_{\aff})$
in view of the Rosenlicht decomposition. But 
$G_{\aff} e G_{\aff} = \kernel(M_{\aff})$ since $e$ is a minimal
idempotent of $G_{\aff}$. 

(v) Assume that $M$ is normal. By (iii) and Zariski's Main Theorem, 
it follows that $\kappa$ is an isomorphism.
In particular, $G \times^{G_{\aff}} M_{\aff}$ is normal. 
Since the natural morphism 
$G \times M_{\aff} \to G \times^{G_{\aff}} M_{\aff}$ is smooth,
it follows that $G \times M_{\aff}$ is normal (e.g., by Serre's 
criterion); hence so is $M_{\aff}$. The converse is straightforward.
This proves the first assertion.

The second assertion is proved by the argument of Proposition 
\ref{prop:small} (ii); note that any two-sided ideal of $M_{\aff}$
is stable under conjugation by $G$, in view of (ii) above.
\qed
\end{proof}

\begin{example}\label{ex:maff}
Let $n$ be a positive integer, $\mu_n$ the group scheme of
$n$th roots of unity, and $A$ an abelian variety containing
$\mu_n$ as a subgroup scheme (any ordinary elliptic curve will do).
As in Example \ref{ex:nonred}, let $N$ be the submonoid scheme
$(z^n = y^n)$ of $(\bA^2, \times)$, and $H$ the unit subgroup scheme
of $N$; then $H \cong \mu_n \times \bG_m$. Next, let 
$G := A \times \bG_m$; this is a connected commutative algebraic
group containing $H$ as a subgroup scheme. Finally, let
\[ M := G \times^H N = A \times^{\mu_n} N. \]
Then one checks that $M$ is an irreducible algebraic monoid with
unit group $G$. Clearly, $G_{\aff} = \bG_m$ and $A(G) = A$; also,
one checks that $M_{\aff} = (\bA^1, \times)$ and hence 
$G \times^{G_{\aff}} M_{\aff} \cong A \times \bA^1$. The morphism
$\kappa: G \times^{G_{\aff}} M_{\aff} \to M$ sends the closed
subscheme $\mu_n \times \{ 0 \}$ to $0$, and restricts to an 
isomorphism over the complement. So $M$ is obtained from
$A \times \bA^1$ by pinching $\mu_n \times \{ 0 \}$ to a point.
\end{example}

In view of Theorem \ref{thm:maff}, we may transfer information from 
affine algebraic monoids (about which much is known, see
\cite{Putcha88, Renner05}) to general ones. For example, the minimal
idempotents of any irreducible algebraic monoid are all conjugate
under the unit group, since this holds in the affine case by
\cite[Prop.~6.1, Cor.~6.8]{Putcha88}. Another noteworthy corollary 
is the following relation between the partial order on idempotents 
and limits of one-parameter subgroups:

\begin{corollary}\label{cor:1PS}
Let $(S,\mu)$ be an irreducible algebraic semigroup, and 
$e, f \in E(S)$. Then $e \leq f$ if and only if there exists 
a homomorphism of algebraic semigroups 
$\lambda : (\bA^1,\times) \to (S,\mu)$ 
such that $\lambda(0) = e$ and $\lambda(1) = f$.
\end{corollary}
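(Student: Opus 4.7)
The implication $(\Leftarrow)$ is immediate: if such a $\lambda$ exists, then applying the semigroup homomorphism property to the identities $0 \cdot 1 = 1 \cdot 0 = 0$ in $(\bA^1,\times)$ forces $ef = fe = \lambda(0) = e$, so $e \leq f$.

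For $(\Rightarrow)$, assume $e \leq f$, i.e., $e = fef$. My plan is to reduce in two steps to the corresponding statement in the linear case, which is classical. As a first reduction, I would replace $S$ by the submonoid $M := fSf$. By Remark \ref{rem:idem}, $M$ is a closed algebraic monoid in $S$ with neutral element $f$; moreover, $M$ is irreducible, being the image of $S$ under the morphism $x \mapsto fxf$; and since $e \in fSf$, both idempotents lie in $M$ with $e \leq f = 1_M$. Any homomorphism $(\bA^1,\times) \to M$ with the required endpoints composes with the inclusion $M \hookrightarrow S$ to give a homomorphism into $S$ with the same endpoints, so it suffices to treat $M$ in place of $S$.

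As a second reduction, I would apply Theorem \ref{thm:maff} to the irreducible algebraic monoid $M$. Part (iv) of that theorem gives $E(M) = E(M_{\aff})$, so both $e$ and $f$ lie in the irreducible affine algebraic monoid $M_{\aff}$; since $f = 1_M \in M_{\aff}$, we have $f = 1_{M_{\aff}}$, and the inequality $e \leq f$ continues to hold inside $M_{\aff}$. Again it suffices to construct $\lambda$ with values in $M_{\aff}$ and compose with $M_{\aff} \hookrightarrow S$. The problem is thereby reduced to the following purely affine statement: in an irreducible linear algebraic monoid with neutral element $1$ and idempotent $e \leq 1$, there exists a homomorphism of algebraic monoids $\lambda : (\bA^1,\times) \to M_{\aff}$ with $\lambda(0)=e$ and $\lambda(1)=1$.

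The main obstacle is this residual affine case, but it belongs entirely to the Putcha--Renner theory of linear algebraic monoids and is classical: every idempotent of a linear algebraic monoid arises as $\lim_{t \to 0} \lambda(t)$ for some one-parameter subgroup $\lambda: \bG_m \to G(M_{\aff})$ with $\lambda(1)=1$, and such a $\lambda$ automatically extends to a homomorphism of algebraic monoids $(\bA^1,\times) \to M_{\aff}$ (see \cite{Putcha88, Renner05}). Concretely, one may embed $M_{\aff}$ in some matrix monoid $M_n$, conjugate $e$ by an element of the unit group to a diagonal idempotent of the form $\diag(I_r,0)$, and verify that the one-parameter subgroup $\lambda(t) = e + t(1-e)$ is multiplicative, takes values in $G(M_{\aff})$ for $t \ne 0$, and satisfies $\lambda(0)=e$, $\lambda(1)=1$.
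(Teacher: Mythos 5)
Your proof is correct and follows essentially the same route as the paper's: reduce to the irreducible monoid $fSf$ with neutral element $f$, reduce further to the affine submonoid $M_{\aff}$ via Theorem \ref{thm:maff}, and then invoke the classical result for linear algebraic monoids (the paper cites \cite[Thm.~2.9, Thm.~2.10]{Putcha81}). The only caveat is your closing ``concrete'' verification: $\lambda(t) = e + t(1-e)$ is indeed multiplicative in the ambient matrix monoid $M_n$, but there is no reason for it to take values in $G(M_{\aff})$ --- producing a one-parameter subgroup of the unit group of $M_{\aff}$ itself that converges to $e$ is precisely the nontrivial content of the cited theorems, so the citation, not the formula, is what carries the affine case.
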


\begin{proof}
\smartqed
The ``if'' implication is obvious (and holds in every algebraic
semigroup). For the converse, assume that $e \leq f$. Then 
$e \in f S f$ and the latter is an irreducible algebraic monoid. 
Thus, we may assume that $S$ itself is an irreducible algebraic 
monoid, and $f$ is the neutral element. In view of Theorem 
\ref{thm:maff}, we may further assume that $S$ is affine.
Then the assertion follows from 
\cite[Thm.~2.9, Thm.~2.10]{Putcha81}.
\qed
\end{proof}

\subsection{The Albanese morphism}
\label{subsec:tam}

By \cite[Sec.~4]{Serre58}, every irreducible variety $X$ admits 
a universal morphism to an abelian variety: the 
\emph{Albanese morphism},
\[ \alpha : X \longrightarrow A(X). \]
The group $A(X)$ is generated by the differences 
$\alpha(x) - \alpha(y)$, where $x,y \in X$.
Also, $X$ admits a universal rational map to an abelian variety:
the \emph{Albanese rational map},
\[ \alpha_{\rat} : X - \to A(X)_{\rat}. \]
The maps $\alpha$ and $\alpha_{\rat}$ are uniquely determined 
up to translations and isomorphisms of the algebraic group $A(X)$. 
Moreover, there exists a unique morphism
\[ \beta: A(X)_{\rat} \longrightarrow A(X) \]
such that $\alpha = \beta \circ \alpha_{\rat}$. The morphism $\beta$
is always surjective; when $X$ is nonsingular, it is an isomorphism. 
For an arbitrary $X$, we have $A(X)_{\rat} = A(U)$, where $U \subseteq X$ 
denotes the nonsingular locus; in particular, $\alpha_{\rat}$ is defined 
at any nonsingular point of $X$.

When $X$ is equipped with a base point $x$, we may assume that
$\alpha(x)$ is the origin of $A(X)$. If $X$ is nonsingular at
$x$, then we may further assume that $\alpha_{\rat}(x)$ is the 
origin of $A(X)_{\rat}$. Then $\alpha$ and $\alpha_{\rat}$ are unique
up to isomorphisms of algebraic groups.

Next, observe that the Albanese morphism of a connected linear
algebraic group $G$ is constant: indeed, $G$ is generated by
rational curves, and any morphism from such a curve 
to an abelian variety is constant. For a connected algebraic 
group $G$ (not necessarily linear), it follows that 
$\alpha = \alpha_{\rat}$ is the quotient homomorphism by the largest
connected affine subgroup $G_{\aff}$. This determines the Albanese 
rational map of an irreducible algebraic monoid $M$, which is 
just the Albanese morphism of its unit group. Some properties of
the Albanese morphism of $M$ are gathered in the following:

\begin{proposition}\label{prop:alb}
Let $M$ be an irreducible algebraic monoid with unit group $G$.
Then the map $\alpha: M \to A(M)$ is a homomorphism of algebraic 
monoids, and an affine morphism. Moreover, the map
$\beta: A(M)_{\rat} = A(G) \to A(M)$ is an isogeny. 
If $M$ is normal, then $\beta$ is an isomorphism. 
\end{proposition}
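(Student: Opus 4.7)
Normalize $\alpha$ so that $\alpha(1_M)=0$; the plan has three main steps.

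First I establish the homomorphism property. The restriction $\alpha|_G$ is a morphism from a connected algebraic group to an abelian variety sending $1$ to $0$, and so is a group homomorphism by standard rigidity. Consequently the morphism $M\times M\to A(M)$, $(x,y)\mapsto\alpha(xy)-\alpha(x)-\alpha(y)$, vanishes on the dense open $G\times G$ and therefore everywhere, so $\alpha$ is a homomorphism of monoids.

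Next I identify $A(M)$ and prove both the isogeny and affineness assertions. Since $\alpha$ is a monoid homomorphism to an algebraic group, Proposition~\ref{prop:uni} factors it as $\alpha=\pi\circ\varphi$ with $\varphi:M\to G/H$ universal and $\pi:G/H\to A(M)$ a homomorphism of algebraic groups; the latter factors through the Albanese $G/H\to A(G/H)$, and a direct computation identifies $A(G/H)$ with $G/HG_{\aff}$. Conversely, the universal property of $\alpha$ applied to the morphism $M\to A(G/H)$ gives a surjection $A(M)\to A(G/H)$ which is inverse, on the generating image $\alpha(M)$, to the one just constructed, so $A(M)\cong G/HG_{\aff}$. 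Under this identification, $\beta:A(G)=G/G_{\aff}\to A(M)$ becomes the canonical projection, with kernel the scheme-theoretic image of $H$ in $A(G)$. By Proposition~\ref{prop:hnaff}, $H$ is affine; hence its image in the abelian variety $A(G)$ is both affine and proper, therefore finite, so $\beta$ is an isogeny. For affineness, take $\kappa:\tilde M:=G\times^{G_{\aff}}M_{\aff}\to M$ from Theorem~\ref{thm:maff}: the projection $\tilde M\to G/G_{\aff}=A(G)$ is an $M_{\aff}$-bundle (hence affine), and composition with the finite morphism $\beta$ makes $\alpha\circ\kappa:\tilde M\to A(M)$ affine. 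For any affine open $V\subseteq A(M)$ the set $\kappa^{-1}(\alpha^{-1}(V))$ is then affine, and since $\kappa$ is finite and surjective, $\alpha^{-1}(V)$ is its image, again affine by Chevalley's theorem (invoked already in the proof of Theorem~\ref{thm:aff}). Hence $\alpha$ is affine.

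Finally, when $M$ is normal, Theorem~\ref{thm:maff}(v) gives $M\cong G\times^{G_{\aff}}M_{\aff}$, and the projection $p:M\to A(G)$ is a morphism to an abelian variety. The universal property of $\alpha$ yields a surjective homomorphism $\gamma:A(M)\to A(G)$ with $\gamma\circ\alpha=p$, while $\beta\circ p$ and $\alpha$ agree on $G$ and hence on $M$ by density. These identities force $\gamma$ and $\beta$ to be mutually inverse, so $\beta$ is an isomorphism.

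The main obstacle is in the middle step: one must set up the two universal-property factorizations carefully and, in positive characteristic where $H$ may be nonreduced, verify that the Albanese of the quotient $G/H$ is indeed $G/HG_{\aff}$ as an algebraic group. Once this identification is in place, the isogeny assertion reduces to the affineness of $H$ (Proposition~\ref{prop:hnaff}) together with the standard fact that proper affine subgroup schemes of abelian varieties are finite, and the affineness of $\alpha$ follows cleanly via $\kappa$ and Chevalley's theorem.
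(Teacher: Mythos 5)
Your proof is correct and follows essentially the same route as the paper: the homomorphism property via density of $G\times G$, the identification $A(M)\cong G/G_{\aff}H$ through the universal homomorphism $\varphi: M\to G/H$ of Proposition \ref{prop:uni}, the isogeny assertion from the affineness of $H$ (Proposition \ref{prop:hnaff}) forcing its image in the abelian variety $A(G)$ to be finite, and the normal case from Theorem \ref{thm:maff}. The only real divergence is the affineness step, where the paper identifies $\alpha$ directly with the associated-bundle projection $G\times^{G_{\aff}H}(G_{\aff}H\times^H N)\to G/G_{\aff}H$, whose fiber is affine, whereas you descend affineness along the finite surjective $\kappa$ via Chevalley's theorem; both arguments rest on the same structural facts and are equally valid.
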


\begin{proof}
\smartqed
The monoid law $\mu : M \times M \to M$, $(1_M,1_M) \mapsto 1_M$
induces a morphism of varieties
$A(\mu) : A(M \times M) \to A(M)$, $0 \mapsto 0$.
Since $A(M \times M) = A(M) \times A(M)$, it follows that
$A(\mu)$ is a homomorphism; hence so is $\alpha$. In particular,
$\alpha$ factors through the universal homomorphism
$\varphi : M \to G/H$ of Proposition \ref{prop:uni}. Hence
$A(M) = A(G/H) = G/G_{\aff}H$, where $G_{\aff}H$ is a normal subgroup
scheme of $G$ such that the quotient
$G_{\aff}H/G_{\aff} \cong H/(H \cap G_{\aff})$ is finite.
Write $M = G \times^H N$ as in Proposition \ref{prop:hnaff}; then
\[ M \cong G \times^{G_{\aff}H} (G_{\aff}H \times^H N) \]
and this identifies $\alpha$ with the natural map to $G/G_{\aff}H$, 
with fiber $G_{\aff}H \times^H N$. But that fiber is affine, 
since so are $N$ and $G_{\aff}H/H \cong G_{\aff}/(G_{\aff} \cap H)$. 
It follows that the morphism $\alpha$ is affine. Also,
$\beta$ is identified with the natural homomorphism
$G/G_{\aff} \to G/G_{\aff}H$; hence the kernel of $\beta$ is
isomorphic to $G_{\aff}H/H$, a finite group scheme.

If $M$ is normal, then $M \cong G \times^{G_{\aff}} M_{\aff}$
by Theorem \ref{thm:maff}. Thus, the natural map $M \to G/G_{\aff}$
is the Albanese morphism.
\qed
\end{proof}

Consider for instance the monoid $M$ constructed in Example 
\ref{ex:maff}. Then $A(M) \cong A/\mu_n$ and $A(G) \cong A$;
this identifies $\beta$ to the quotient morphism $A \to A/\mu_n$.

Returning to our general setting, recall that every irreducible 
algebraic monoid may be viewed 
as an equivariant embedding of its unit group. For an arbitrary
equivariant embedding $X$ of a connected algebraic group $G$, 
we may again identify $A(X)_{\rat}$ with $A(G)$; when $X$ is normal,
we still have $A(X) = A(X)_{\rat}$ as a consequence of 
\cite[Thm.~3]{Brion10}. But the morphism $\alpha$ is generally 
nonaffine, and the finiteness of $\beta$ is an open question
in this setting.

We now characterize algebraic monoids among equivariant embeddings:

\begin{theorem}\label{thm:monemb}
Let $X$ be an equivariant embedding of a connected algebraic group
$G$. Then $X$ has a structure of algebraic monoid with unit group 
$G$ if and only if the Albanese morphism $\alpha :X \to A(X)$ is affine.
\end{theorem}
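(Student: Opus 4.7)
The ``only if'' direction is contained in Proposition~\ref{prop:alb}, which shows that the Albanese morphism of an irreducible algebraic monoid is automatically affine. I focus on the converse. Assume $\alpha \colon X \to A(X)$ is affine and normalize so that $\alpha(1_G) = 0$. Since $A(X)$ is an abelian variety with origin $0$, the restriction $\gamma := \alpha\vert_G \colon G \to A(X)$ is a homomorphism; it is surjective because its image is a closed subgroup of $A(X)$ whose translates generate $A(X)$. Set $H := \ker(\gamma)$ (scheme-theoretic kernel), a closed normal subgroup scheme of $G$, and $N := \alpha^{-1}(0)$ (scheme-theoretic fiber). The $G \times G$-action on $A(X)$ is given by $(g_1,g_2) \cdot a = \gamma(g_1) + a - \gamma(g_2)$, so $H \times H$ stabilizes $N$, making $N$ an equivariant embedding of $H$. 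By hypothesis $N$ is affine, hence $N_{\mathrm{red}}$ is an affine variety containing $H_{\mathrm{red}}$ as an open dense subvariety; so $H_{\mathrm{red}}$ is quasi-affine, and therefore affine as an algebraic group.

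By Rittatore's Proposition~1 of \cite{Rittatore98}, $N_{\mathrm{red}}$ carries a unique algebraic monoid structure $\ast$ with unit group $H_{\mathrm{red}}$, whose restriction to $H_{\mathrm{red}} \times N_{\mathrm{red}}$ (resp.\ $N_{\mathrm{red}} \times H_{\mathrm{red}}$) coincides with the left (resp.\ right) $H_{\mathrm{red}}$-action. Because $H_{\mathrm{red}}$ is normal in $G$, conjugation by any $g \in G$ sends $N_{\mathrm{red}}$ to itself (it preserves $\alpha$) and intertwines the monoid structure with itself; by the uniqueness clause it acts on $(N_{\mathrm{red}}, \ast)$ by monoid automorphisms.

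Granting the scheme-theoretic identification $X \cong G \times^{H_{\mathrm{red}}} N_{\mathrm{red}}$ (with $H_{\mathrm{red}}$ acting on $G$ by right translation and on $N_{\mathrm{red}}$ by left translation), I define a monoid law on $X$ by
\[
[g_1,n_1] \cdot [g_2,n_2] \;:=\; \bigl[g_1 g_2,\; (g_2^{-1} n_1 g_2) \ast n_2\bigr].
\]
Well-definedness reduces to an $(H_{\mathrm{red}} \times H_{\mathrm{red}})$-invariance check of the right-hand side, which is a direct computation using normality of $H_{\mathrm{red}}$ in $G$ and the compatibility between $\ast$ and the $H_{\mathrm{red}}$-action; associativity and the identity axiom (with neutral element $[1,1]$) then follow from those of $\ast$ and the group law on $G$. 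The unit group of the resulting monoid contains $G$, and inspection of the formula shows that $[g,n]$ is invertible only if $n$ is invertible in $N_{\mathrm{red}}$, i.e.\ $n \in H_{\mathrm{red}}$; so the unit group equals $G$.

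The principal obstacle I expect is justifying the scheme-theoretic identification $X \cong G \times^{H_{\mathrm{red}}} N_{\mathrm{red}}$, which is what turns the pointwise formula above into an actual morphism. The natural map from the bundle to $X$ is finite, birational, and bijective on closed points (by an argument parallel to Proposition~\ref{prop:nred}), and it is an isomorphism when $X$ is normal by Zariski's Main Theorem. In the general (possibly non-normal) case, especially when $H$ and $N$ are nonreduced (cf.\ Example~\ref{ex:nred}), one should instead work with the scheme-theoretic quotient $G \times^{H} N$ and equip the scheme $N$ itself with a monoid scheme structure, along the lines of Subsection~\ref{subsec:ind}; this is the step that would demand the most delicate treatment.
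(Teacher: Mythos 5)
Your overall strategy coincides with the paper's: restrict attention to the fiber of $\alpha$ over the origin, equip it with a monoid structure extending the group law of the stabilizer, and induce up to $X$. But the proof as written has a genuine gap exactly where you flag it, and the deferred step is not a technicality --- it is the entire content of the converse direction. The identification $X \cong G \times^{H_{\red}} N_{\red}$ on which your multiplication formula rests is \emph{false} in general: the natural map $G \times^{H_{\red}} N_{\red} \to X$ is finite, bijective and birational (Proposition \ref{prop:nred}), but need not be an isomorphism when $X$ is not normal, and the subgroup scheme $H$ (the paper's $K$) is genuinely nonreduced in positive characteristic even for such $X$ --- see Examples \ref{ex:nred} and \ref{ex:maff}, where $K/G_{\aff} \cong \mu_p$ sits inside an abelian variety. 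So your construction only produces a monoid structure on a finite birational modification of $X$, not on $X$ itself. Moreover, Rittatore's Proposition 1 of \cite{Rittatore98} is a statement about affine \emph{varieties}; it cannot be cited for the nonreduced fiber, which is precisely the case one cannot avoid.

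The paper sidesteps the first half of your difficulty for free: since $\alpha$ is a $G$-equivariant affine morphism onto the homogeneous space $A(X) \cong G/K$ (with $K$ the \emph{scheme-theoretic} stabilizer), descent along the principal $K$-bundle $G \to G/K$ gives $X \cong G \times^K Y$ with $Y$ the scheme-theoretic fiber --- no normality or reducedness hypotheses needed. The real work, which your proposal omits, is then to extend the group law $\mu_K : K \times K \to K$ to a morphism $\mu_Y : Y \times Y \to Y$ for the possibly nonreduced affine scheme $Y$. The paper does this by rerunning Rittatore's argument at the scheme level: the left and right $K$-actions glue to a morphism $(K \times Y) \cup (Y \times K) \to Y$, and since $Y$ is affine one only needs the equality $\cO((K \times Y) \cup (Y \times K)) = \cO(Y \times Y)$, which reduces to the linear-algebra identity $(W \otimes V) \cap (V \otimes W) = W \otimes W$ for a subspace $W \subseteq V$, applied to $\cO(Y) \subseteq \cO(K)$. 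Associativity and the unit axiom then propagate from the dense subscheme $K$. Without this step (or an equivalent one), the converse implication is not proved.
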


\begin{proof}
\smartqed
In view of Proposition \ref{prop:alb}, it suffices to show that $X$
is an algebraic monoid if $\alpha$ is affine. Note that $\alpha$
is $G \times G$-equivariant for the given action of $G \times G$ 
on $X$, and a transitive action on $A(X)$. It follows that
$A(X) \cong (G \times G)/(K \times K) \diag(G)$ for a unique normal
subgroup scheme $K$ of $G$; then $A(X) \cong G/K$ equivariantly
for the left (or right) action of $G$. Moreover, $\alpha$ is a
fiber bundle of the form 
\[ G \times G \times^{(K \times K) \diag(G)} Y \longrightarrow 
(G \times G)/(K \times K) \diag(G), \]
where $Y$ is a scheme equipped with an action of 
$(K \times K)\diag(G)$; for the left (or right) $G$-action, 
this yields the fiber bundle $G \times^K Y \to G/K$.
Since $\alpha$ is affine, so is $Y$. 
Also, $Y$ meets the open orbit $G \cong (G \times G)/\diag(G)$ along 
a dense open subscheme isomorphic to $K$, where $K \times K$ acts by 
left and right multiplication, and $\diag(G)$ by conjugation.
Thus, the group scheme $K$ is quasi-affine, and hence is affine.

We now show that the group law $\mu_K : K \times K \to K$ extends 
to a morphism $\mu_Y : Y \times Y \to Y$, by following the argument 
of \cite[Prop.~1]{Rittatore98}. The left action $K \times Y \to Y$ and the 
right action $Y \times K \to Y$ restrict both to $\mu_K$ on $K \times K$,
and hence yield a morphism $(K \times Y) \cup (Y \times K) \to Y$ . 
Since $Y$ is affine, it suffices to show the equality 
\[ \cO((K \times Y) \cup (Y \times K)) = \cO(Y \times Y). \]
But 
$\cO(Y \times Y) = \cO(Y) \otimes \cO(Y) \subseteq 
\cO(K) \otimes \cO(K) = \cO(K \times K)$, since $K$ is dense in $Y$.
Moreover, 
\[ \cO((K \times Y) \cup (Y \times K)) = 
(\cO(K) \otimes \cO(Y)) \cap (\cO(Y) \otimes \cO(K)), \]
where the intersection is considered in $\cO(K) \otimes \cO(K)$. 
Now for any vector space $V$ and subspace $W$, we easily
obtain the equality 
$(W \otimes V) \cap (V \otimes W) = W \otimes W$ as subspaces
of $V \otimes V$. When applied to $\cO(Y) \subseteq \cO(K)$, 
this yields the desired equality.

Since $\mu_Y$ is associative on the dense subscheme $K$, it is associative 
everywhere; likewise, $\mu_Y$ admits $1_K$ as a neutral element.
Thus, $\mu_Y$ is an algebraic monoid law on $Y$. We may now form the 
induced monoid $G \times^K Y$ as in Subsection \ref{subsec:ind}, 
to get the desired structure on~$X$.
\qed
\end{proof}

\subsection{Algebraic semigroups and monoids over perfect fields}
\label{subsec:asmopf}

In this subsection, we extend most of the above results to the 
setting of algebraic semigroups and monoids defined over 
a perfect field. We use the terminology and results of 
\cite{Springer}, especially Chapter 11 which discusses basic 
rationality results on varieties. 

Let $F$ be a subfield of the algebraically closed field $k$. 
We assume that $F$ is \emph{perfect}, i.e., every algebraic 
extension of $F$ is separable; we denote by $\bar{F}$ 
the algebraic closure of $F$ in $k$, and by $\Gamma$ the 
Galois group of $\bar{F}$ over $F$.

We say that an algebraic semigroup $(S,\mu)$ (over $k$) is 
\emph{defined over $F$}, or an \emph{algebraic $F$-semigroup},
if $S$ is an $F$-variety and the morphism $\mu$ is defined over 
$F$. Then the set of $\bar{F}$-points, $S(\bar{F})$, is 
a subsemigroup of $S$ equipped with an action of $\Gamma$ 
by semigroup automorphisms, and the fixed point subset 
$S(\bar{F})^{\Gamma}$ is the semigroup of $F$-points, $S(F)$.

Note that an algebraic $F$-semigroup may well have no $F$-point;
for example, an $F$-variety without $F$-point equipped with
the trivial semigroup law $\mu_l$ or $\mu_r$. But this is the
only obstruction to the existence of $F$-idempotents, as shown 
by the following:
 
\begin{proposition}\label{prop:idemF}
Let $(S,\mu)$ be an algebraic $F$-semigroup. 

\begin{enumerate}
\item[{\rm (i)}]{$E(S)$ and $\kernel(S)$ (viewed as closed subsets of 
$S$) are defined over $F$.}
\item[{\rm (ii)}]{If $S$ is commutative, then its smallest idempotent
is defined over $F$.}
\item[{\rm (iii)}]{If $S$ has an $F$-point, then it has an idempotent 
$F$-point.}
\end{enumerate}

\end{proposition}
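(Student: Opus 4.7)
The plan is to exploit the fact that $F$ is perfect, so that Galois descent applies to $\Gamma$-stable reduced closed subschemes of $S_{\bar F}$. Each of the three parts will rest on identifying an intrinsic characterization of the object in question and observing that the $\Gamma$-action on $S(\bar F)$ by semigroup automorphisms must preserve it.

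For part (i), the scheme $E(S)$ is tautologically $F$-defined: it is the scheme-theoretic preimage of the diagonal in $S \times S$ under the $F$-morphism $s \mapsto (\mu(s,s),s)$. For $\kernel(S)$, I would invoke its intrinsic characterization (Definition \ref{def:ker}) as the smallest two-sided ideal of $S$. This property is preserved by every automorphism of $(S,\mu)$, so each $\gamma \in \Gamma$ carries $\kernel(S)$ to itself. Since $\kernel(S)$ is closed in $S$ by Proposition \ref{prop:SeS}, the standard Galois descent result for reduced closed subschemes of $S_{\bar F}$ over the perfect field $F$ then yields that $\kernel(S)$ is $F$-defined.

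For part (ii), Proposition \ref{prop:mini}(iii) gives that, when $S$ is commutative, $E(S)$ is a finite set possessing a unique smallest element $e$. Because $E(S)$ is $F$-defined (by (i)) and finite, we have $e \in E(S)(\bar F)$. Any $\gamma \in \Gamma$ acts on $S(\bar F)$ by semigroup automorphism, so it permutes $E(S)(\bar F)$ and preserves the partial order $\leq$ of Definition \ref{def:order} (which is framed purely in terms of $\mu$). Hence $\gamma(e)$ is also a smallest idempotent, and by uniqueness $\gamma(e) = e$. Therefore $e \in E(S)(\bar F)^\Gamma = E(S)(F)$.

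For part (iii), given $x \in S(F)$, I would introduce $T := \overline{\{x^n \mid n \geq 1\}}$, the Zariski closure of the monogenic subsemigroup generated by $x$. Since $x$ is $F$-rational, $T$ is an $F$-defined closed subvariety of $S$. It is a subsemigroup because $\{x^n\}$ is closed under multiplication, and it is commutative because the two morphisms $\mu$ and $\mu^{\op} := \mu \circ \mathrm{swap}$ from $T \times T$ to $T$ agree on the dense subset $\{x^n\} \times \{x^n\}$. Thus $T$ is a commutative algebraic $F$-semigroup, and applying (ii) to it produces a smallest idempotent of $T$ lying in $T(F) \subseteq S(F)$, which is automatically idempotent in $S$. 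The main obstacle is the Galois descent step in (i): once one is comfortable with the fact that a $\Gamma$-stable reduced closed subscheme of $S_{\bar F}$ descends to $F$ when $F$ is perfect, the rest of the argument cascades from this.
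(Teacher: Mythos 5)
Your proof is correct and follows essentially the same route as the paper: Galois descent for $\Gamma$-stable closed subsets over the perfect field $F$ in (i) and (ii), and reduction of (iii) to the commutative case via the closure of $\{x^n \mid n \geq 1\}$. The only (harmless) variation is that you cut out $E(S)$ directly by $F$-equations rather than by descent, and you spell out the $\Gamma$-fixedness of the smallest idempotent, which the paper dismisses with ``proved similarly.''
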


\begin{proof}
\smartqed
(i) Clearly, $E(S)$ and $\kernel(S)$ are defined over $\bar{F}$ 
and their sets of $\bar{F}$-points are stable under the action of 
$\Gamma$ on $S(\bar{F})$. Thus, $E(S)$ and $\kernel(S)$ 
are defined over $F$ by \cite[Prop.~11.2.8(i)]{Springer}. 

(ii) is proved similarly.

(iii) Let $x \in S(F)$ and denote by $\langle x \rangle$ the
closure in $S$ of the set $\{ x^n, \; n \geq 1 \}$. Then
$\langle x \rangle$ is a closed commutative subsemigroup of $S$, 
defined over $F$ by \cite[Lem.~11.2.4]{Springer}. 
In view of (ii), $\langle x \rangle$ contains an idempotent 
defined over $F$.
\qed
\end{proof}

We do not know if any algebraic $F$-semigroup $S$ has a minimal
idempotent defined over $F$. This holds if $S$ is irreducible,
as we will see in Proposition \ref{prop:irrF}. First, we record
two rationality results on algebraic monoids:

\begin{proposition}\label{prop:monoF}
Let $(M,\mu,1_M)$ be an algebraic monoid with unit group
$G$ and neutral component $M^o$. If $M$ and $\mu$ are defined
over $F$, then so are $1_M$, $G$ and $M^o$. Moreover, the inverse
map $\iota : G \to G$ is defined over $F$.
\end{proposition}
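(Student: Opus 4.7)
The plan is to treat the four assertions in order, in each case combining a short uniqueness observation with Galois descent via \cite[Prop.~11.2.8]{Springer}. Let $\Gamma$ denote the Galois group of $\bar{F}$ over $F$; since $\mu$ is defined over $F$, the natural action of $\Gamma$ on $M(\bar{F})$ preserves $\mu$.

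First, for $1_M$: for any $\gamma \in \Gamma$ and any $x \in M(\bar{F})$ one computes
\[ \mu(x, \gamma(1_M)) = \gamma(\mu(\gamma^{-1}(x), 1_M)) = \gamma(\gamma^{-1}(x)) = x, \]
and similarly $\mu(\gamma(1_M), x) = x$. Hence $\gamma(1_M)$ is a neutral element of $M(\bar{F})$ and so equals $1_M$ by uniqueness. Thus $1_M$ is $\Gamma$-fixed, i.e.\ it is an $F$-point.

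Next, for $M^o$: the irreducible components of $M$ are defined over $\bar F$ (since $F$ is perfect, these components each descend to some finite extension of $F$ inside $\bar F$), and $\Gamma$ permutes them; since $\Gamma$ fixes $1_M$, it preserves the unique component containing $1_M$, which is $M^o$ by Proposition \ref{prop:neu}. By \cite[Prop.~11.2.8]{Springer}, $M^o$ is defined over $F$.

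For the unit group $G$, I would reuse the construction in the proof of Theorem \ref{thm:unit}. The closed subvariety
\[ \tilde G := \{(x,y) \in M \times M^{\op} \mid \mu(x,y) = \mu(y,x) = 1_M\} \]
is cut out by equations involving only $\mu$ and $1_M$, both defined over $F$, so $\tilde G$ is an $F$-subvariety of $M \times M^{\op}$. The first projection $\pi : \tilde G \to M$ is a locally closed immersion with image $G$ by Theorem \ref{thm:unit}, and $\pi$ is defined over $F$; hence $G$ is a locally closed $F$-subvariety of $M$. Finally, for the inverse morphism $\iota : G \to G$, observe that for every $\gamma \in \Gamma$, the conjugate $\gamma \circ \iota \circ \gamma^{-1}$ sends each $g \in G(\bar{F})$ to a two-sided inverse of $g$ in the abstract group $G(\bar{F})$; as such an inverse is unique, $\gamma \circ \iota \circ \gamma^{-1} = \iota$, so $\iota$ is $\Gamma$-equivariant and hence defined over $F$. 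No single step looks delicate: each rests on a brief uniqueness argument, with descent supplied by \cite[Prop.~11.2.8]{Springer}.
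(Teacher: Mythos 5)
Your proof is correct and follows essentially the same strategy as the paper's: characterize each object by a uniqueness property and descend from $\bar F$ to $F$ via \cite[Prop.~11.2.8]{Springer} (the paper treats $M^o$ as the closure of $G^o$ and realizes the graph of $\iota$ as the fiber $\mu_G^{-1}(1_M)$, while you use the permutation action on irreducible components and Galois equivariance of $\iota$, but these are cosmetic variations). The one step you elide is showing that $1_M$ (and likewise $\iota$) is defined over $\bar F$ in the first place, which is needed before $\gamma(1_M)$ or $\gamma\circ\iota\circ\gamma^{-1}$ makes sense; the paper supplies this by observing that $1_M$ is the unique point satisfying $xy=yx=y$ for all $y\in M(\bar F)$, a closed condition defined over $\bar F$, using the density of $M(\bar F)$ in $M$.
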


\begin{proof}
\smartqed
Observe that $1_M$ is the unique point $x \in M$ such that
$x y = y x = y$ for all $y \in M(\bar{F})$ (since $M(\bar{F})$
is dense in $M$). It follows that $1_M \in M(\bar{F})$; also,
$1_M$ is $\Gamma$-invariant by uniqueness. Thus, $1_M \in M(F)$.

The assertion on $G$ follows from \cite[Prop.~11.2.8(ii)]{Springer}. 
It implies that $G^o$ is defined over $F$ by 
[loc.~cit., Prop.~12.1.1]. Since $M^o$ is the closure of 
$G^o$ in $M$, it is also defined over $F$ in view of 
[loc.~cit., Prop.~11.2.8(i)].

It remains to show that $\iota$ is defined over $F$;
equivalently, its graph is an $F$-subvariety of $G \times G$.
But this graph equals
\[ \{ (x,y) \in G \times G ~\vert~ x y = 1_M \} = 
\mu_G^{-1}(1_M), \]
where $\mu_G : G \times G \to G$ denotes the restriction of $\mu$,
and $\mu_G^{-1}(1_M)$ stands for the set-theoretic fiber. Moreover,
this fiber is defined over $F$ in view of 
\cite[Cor.~11.2.14]{Springer}.
\qed
\end{proof}

\begin{proposition}\label{prop:irrF}
Let $(M,\mu,1_M)$ be an irreducible algebraic monoid with unit 
group $G$. If $(M,\mu)$ is defined over $F$, 
then the universal homomorphism to an algebraic group, 
$\varphi: M \to \cG(M)$, is defined over $F$ as well. Moreover, 
$G_{\aff}$ and $M_{\aff}$ are defined over $F$.
\end{proposition}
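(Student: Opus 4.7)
The plan is in three steps. First, I will show that the normal subgroup scheme $H$ of $G$ from Proposition \ref{prop:uni} is defined over $F$. Second, I will descend the universal homomorphism $\varphi : M \to \cG(M) = G/H$ to $F$ by a density argument. Third, I will handle $G_{\aff}$ via the rational form of Chevalley's theorem, and $M_{\aff}$ by taking closures.

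For the first step, Propositions \ref{prop:monoF} and \ref{prop:idemF}(i) guarantee that $G$ and $\kernel(M)$ are both defined over $F$. I would pick any $x \in \kernel(M)(\bar F)$. By Proposition \ref{prop:uni}(ii), $H$ is the smallest normal subgroup scheme of $G$ containing the isotropy subgroup scheme $G_x$, and by Remark \ref{rem:uni}(i) this is independent of the choice of $x \in \kernel(M)$. For each $\sigma \in \Gamma$, the Galois transform $H^\sigma$ is then the smallest normal subgroup scheme of $G$ containing $(G_x)^\sigma = G_{\sigma(x)}$; since $\sigma(x)$ again lies in $\kernel(M)(\bar F)$, this independence forces $H^\sigma = H$. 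Hence $H$ is $\Gamma$-stable, and so defined over $F$ by \cite[Prop.~11.2.8(i)]{Springer}. It follows that $\cG(M) = G/H$ is an algebraic $F$-group, and the quotient homomorphism $\pi : G \to G/H$ is defined over $F$.

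For the second step, the construction of $\varphi$ in the proof of Proposition \ref{prop:uni} shows $\varphi\vert_G = \pi$, which is now an $F$-morphism. For each $\sigma \in \Gamma$, the Galois transform $\varphi^\sigma : M \to G/H$ therefore agrees with $\varphi$ on the dense open subset $G \subseteq M$; since $M$ is reduced and $G/H$ is separated, this forces $\varphi^\sigma = \varphi$, so $\varphi$ is defined over $F$.

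For the third step, the rational form of Chevalley's structure theorem over a perfect base field (see \cite{BSU}) yields directly that the largest connected affine normal subgroup $G_{\aff}$ of the connected $F$-group $G$ is defined over $F$. Finally, $M_{\aff}$, being the closure of the $F$-subvariety $G_{\aff}$ inside the $F$-variety $M$, is itself defined over $F$ in view of \cite[Prop.~11.2.8(i)]{Springer}. The main obstacle will be the first step: without the intrinsic characterization of $H$ in Remark \ref{rem:uni}(i), Galois conjugation would only show $H^\sigma$ to be $G(\bar F)$-conjugate to $H$, which is insufficient for $\Gamma$-stability.
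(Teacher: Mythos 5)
Your proof is correct, and the underlying technique is the same one the paper uses: Galois descent over the perfect field $F$. The paper's own proof, however, is essentially two sentences: the first assertion ``follows from the uniqueness of $\varphi$ by a standard argument of Galois descent'' (citing \cite[Chap.~V, \S 4]{Serre59}), the assertion on $G_{\aff}$ ``is proved similarly,'' and $M_{\aff}$ is handled by \cite[Prop.~11.2.8(i)]{Springer} exactly as you do. Where you genuinely differ is in the implementation of the descent for $\varphi$: the paper descends $\varphi$ abstractly from its universal property (each Galois transform $\varphi^{\sigma}$ is again a universal homomorphism, so uniqueness furnishes canonical, cocycle-compatible descent data), with no need to open up the structure of $\cG(M)$; you instead descend the concrete model $\cG(M) = G/H$ by proving that $H$ is $\Gamma$-stable via the independence of $H$ on the choice of $x \in \kernel(M)$ (Remark \ref{rem:uni}(i)), and then descend $\varphi$ itself by density of $G$ in $M$. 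Both routes are valid; the abstract one is shorter and requires no information about $H$, while yours yields the extra conclusion that $H$ (hence the fibration $M \to G/H$) is itself defined over $F$. Two minor remarks: the citation of \cite[Prop.~11.2.8(i)]{Springer} for $H$ is slightly imprecise since $H$ may be a nonreduced subgroup scheme in positive characteristic, though Galois descent for $\Gamma$-stable closed subschemes over a perfect field is standard; and your closing caveat is unnecessary, because even without Remark \ref{rem:uni}(i) the isotropy subgroup schemes $G_y$, $y \in \kernel(M)$, are all conjugate by Proposition \ref{prop:clo}, so $H^{\sigma}$ would be a $G$-conjugate of the \emph{normal} subgroup scheme $H$ and hence equal to it anyway.
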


\begin{proof}
\smartqed
The first assertion follows from the uniqueness of $\varphi$
by a standard argument of Galois descent, see 
\cite[Chap.~V, \S 4]{Serre59}. The (well-known) assertion on
$G_{\aff}$ is proved similarly; it implies the assertion on 
$M_{\aff}$ by \cite[Prop.~11.2.8(i)]{Springer}.
\qed
\end{proof}

Returning to algebraic semigroups, we obtain the promised:

\begin{proposition}\label{prop:minF}
Let $(S,\mu)$ be an irreducible algebraic $F$-semigroup. If $S$ 
has an $F$-point, then some minimal idempotent of $S$ is
defined over $F$.
\end{proposition}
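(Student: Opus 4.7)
First, I apply Proposition \ref{prop:idemF}(iii) to an $F$-point $x \in S(F)$: the closed commutative $F$-subsemigroup $\langle x \rangle$ contains an idempotent $e_0 \in S(F)$. I then replace $S$ by the closed $F$-subvariety $M := e_0 S e_0$, which is an irreducible algebraic $F$-monoid with neutral element $e_0 \in M(F)$. Any idempotent $f \in E(S)$ with $f \leq e_0$ lies in $M$ (since $f = e_0 f e_0$) and satisfies $fSf = f(e_0 S e_0)f = fMf$, so by Proposition \ref{prop:mini}(ii), $f$ is minimal in $S$ if and only if it is minimal in $M$. Hence it suffices to find a minimal $F$-idempotent of $M$.

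Second, I descend to the affine part $M_{\aff}$, which by Proposition \ref{prop:irrF} is an irreducible affine algebraic $F$-monoid and still contains $e_0 \in M_{\aff}(F)$. By Theorem \ref{thm:maff}(iv) one has $E(M) = E(M_{\aff})$ and $\kernel(M) = G(M)\,\kernel(M_{\aff})$. Therefore any idempotent $f \in E(M_{\aff})$ that is minimal in $M_{\aff}$ lies in $\kernel(M_{\aff}) \subseteq \kernel(M)$, and is thus minimal in $M$ by Proposition \ref{prop:SeS}(vi). This reduces the statement to the case of an irreducible affine algebraic $F$-monoid with an $F$-point.

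The main obstacle is this remaining affine case. The natural route is to exhibit an $F$-rational point in $\kernel(M_{\aff})$, for then Proposition \ref{prop:idemF}(iii) applied to the $F$-defined subsemigroup $\kernel(M_{\aff})$ yields an $F$-idempotent there, which is automatically minimal by Proposition \ref{prop:SeS}(vi). Now $\kernel(M_{\aff})$ is the unique closed orbit of $G(M_{\aff}) \times G(M_{\aff})$ on $M_{\aff}$ under two-sided multiplication (Proposition \ref{prop:sim}). One produces an $F$-point of this orbit by combining: the existence over the perfect field $F$ of a maximal $F$-split torus $T$ in the affine $F$-group $G(M_{\aff})$; the fact that its closure $\overline T$ in $M_{\aff}$ is an affine toric $F$-variety whose torus-fixed points are $F$-rational; and the characterization of the partial order on idempotents via $F$-rational one-parameter subgroups $\lambda:(\mathbb A^1,\times) \to M_{\aff}$ given by Corollary \ref{cor:1PS}, together with the structure theory of linear algebraic monoids of Putcha and Renner, which guarantees that some such $\lambda$ with $\lambda(1) = e_0$ has limit $\lambda(0)$ inside $\kernel(M_{\aff})$. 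This last step is the main technical point of the argument.
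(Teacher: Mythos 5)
Your two reduction steps (to the monoid $e_0Se_0$ via Proposition \ref{prop:idemF}(iii), then to the affine monoid $M_{\aff}$ via Proposition \ref{prop:irrF} and Theorem \ref{thm:maff}) are correct and coincide with the paper's. The gap is in the final, affine step, which you yourself flag as ``the main technical point'' but do not actually prove. Two concrete problems: first, the result of Putcha that you need (\cite[Cor.~6.10]{Putcha88}) asserts that the closure of a \emph{maximal torus} of $G(M_{\aff})$ meets the kernel; it says nothing about the closure of a maximal \emph{$F$-split} torus, which a priori could be much smaller (even trivial), and whose closure need not meet $\kernel(M_{\aff})$ without further argument. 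Second, the assertion that some $F$-rational one-parameter subgroup $\lambda:(\bA^1,\times)\to M_{\aff}$ with $\lambda(1)=e_0$ has $\lambda(0)\in\kernel(M_{\aff})$ is exactly the statement that needs proof; Corollary \ref{cor:1PS} is a statement over the algebraically closed field and does not produce $F$-rational $\lambda$'s. One can in fact repair this line of argument by a toric computation (the cone of the toric variety $\overline{T}$, for $T$ a maximal torus defined over $F$, is Galois-stable and strongly convex, so its relative interior contains a Galois-fixed lattice point, whose limit is the smallest idempotent of $\overline{T}$), but none of this is in your proposal.

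The paper's route through the affine case is simpler and avoids the need to exhibit an $F$-point of the kernel at all. Over the perfect field $F$, the unit group of $M_{\aff}$ contains a maximal torus $T$ defined over $F$ (Springer, Thm.~13.3.6 --- note: maximal over $k$, not $F$-split). Its closure $\overline{T}$ is defined over $F$ and meets $\kernel(M_{\aff})$ by \cite[Cor.~6.10]{Putcha88}, so $N:=\overline{T}\cap\kernel(M_{\aff})$ is a nonempty \emph{commutative} algebraic $F$-semigroup. Now Proposition \ref{prop:idemF}(ii) --- not (iii) --- applies: the smallest idempotent of a commutative $F$-semigroup is unique, hence Galois-invariant, hence defined over $F$, with no need for $N$ to have an $F$-point in advance. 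That idempotent lies in $\kernel(M_{\aff})$ and is therefore minimal by Proposition \ref{prop:SeS}(vi). You should replace your split-torus construction by this argument, or else supply the missing toric-geometry lemma.
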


\begin{proof}
\smartqed
By Proposition \ref{prop:idemF}, we may choose $e \in E(S(F))$.
Then $eSe$ is a closed irreducible submonoid of $S$, and is defined
over $S$ in view of \cite[Prop.~11.2.8(i)]{Springer} again. Moreover,
any minimal idempotent of $eSe$ is a minimal idempotent of $S$.
So we may assume that $S$ is an irreducible monoid, $M$. 
In view of Theorem \ref{thm:maff} and Proposition \ref{prop:irrF}, 
we may further assume that $M$ is affine. Then the unit group of $M$ 
contains a maximal torus $T$ defined over $F$, 
by Proposition \ref{prop:monoF} and 
\cite[Thm.~13.3.6, Rem.~13.3.7]{Springer}. The closure $\bar{T}$
of $T$ in $M$ is defined over $F$, and meets $\kernel(M)$
in view of \cite[Cor.~6.10]{Putcha88}. So the (set-theoretic) 
intersection $N := \bar{T}\cap \kernel(M)$ is a commutative algebraic 
semigroup, defined over $F$ by \cite[Thm.~11.2.13]{Springer}. 
Now applying Proposition \ref{prop:idemF} to $N$ yields the 
desired idempotent.
\qed
\end{proof}

\begin{remark}\label{rem:nonper}
The above observations leave open all the rationality questions for
an algebraic semigroup $S$ over a field $F$, not necessarily perfect.
In fact, $S$ has an idempotent $F$-point if it has an $F$-point, 
as follows from the main result of \cite{Brion-Renner}. 
But some results do not extend to this setting: for example, 
the kernel of an algebraic $F$-monoid may not be defined over $F$, 
as shown by a variant of the standard example of a linear algebraic 
$F$-group whose unipotent radical is not defined over $F$ (see 
\cite[Exp.~XVII, 6.4.a)]{SGA3} or \cite[12.1.6]{Springer};
specifically, replace the multiplicative group $\bG_m$ with
the monoid $(\bA^1, \times)$ in the construction of this example). 
Also, note that Chevalley's structure theorem fails over any 
imperfect field $F$ (see \cite[Exp.~XVII, App.~III, Cor.]{SGA3}, 
and \cite{Totaro} for recent developments). Thus, $G_{\aff}$ 
may not be defined over $F$ with the notation and assumptions 
of Proposition \ref{prop:irrF}. Yet the Albanese morphism still 
exists for any $F$-variety equipped with an $F$-point 
(see \cite[App.~A]{Wittenberg}) and hence for any algebraic 
$F$-semigroup equipped with an $F$-idempotent.
\end{remark}

\section{Algebraic semigroup structures on certain varieties}
\label{sec:asscv}

\subsection{Abelian varieties}
\label{subsec:av}

In this subsection, we begin by describing all the algebraic 
semigroup laws on an abelian variety. Then we apply the result to 
the study of the Albanese morphism of an irreducible algebraic 
semigroup. 

\begin{proposition}\label{prop:abel}
Let $A$ be an abelian variety with group law denoted additively, 
$\mu$ an algebraic semigroup law on $A$, and $e$ an idempotent of 
$(A,\mu)$; choose $e$ as the neutral element of $(A,+)$. 

\begin{enumerate}
\item[{\rm (i)}]{There exists a unique decomposition of algebraic semigroups
\[ (A,\mu) = 
(A_0,\mu_0) \times (A_l,\mu_l) \times (A_r,\mu_r) \times (B,+) \]
where $A_0$, $A_l$, $A_r$ and $B$ are abelian varieties, and
$\mu_0$ (resp. $\mu_l, \mu_r$) the trivial semigroup law on $A_0$ 
(resp. $A_l$, $A_r$) defined in Example \ref{ex:semi} (i).}
\item[{\rm (ii)}]{The corresponding projection $\varphi : A \to B$
is the universal homomorphism of $(A,\mu)$ to an algebraic group.
Moreover, we have 
$E(S) = \{ e \} \times A_l \times A_r \times \{ e \}$
and $\kernel(S) = \{ e \} \times A_l \times A_r \times B$.}
\end{enumerate}

\end{proposition}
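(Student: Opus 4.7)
The plan is to reduce the problem to linear algebra in the endomorphism ring of the abelian variety $A$. Since $e$ is the neutral element of $(A,+)$, the morphisms
\[ f : A \to A, \quad x \longmapsto \mu(x,e), \qquad
   g : A \to A, \quad y \longmapsto \mu(e,y) \]
send $e$ to $e$, and hence are homomorphisms of the abelian variety $A$. My first step is to show the key identity
\[ \mu(x,y) = f(x) + g(y). \]
For this I would apply the rigidity lemma (or the see-saw principle) for abelian varieties to the morphism $(x,y) \mapsto \mu(x,y) - f(x) - g(y)$, which vanishes on $\{e\} \times A$ and on $A \times \{e\}$, hence is identically $e$. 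Associativity of $\mu$ then translates into the relations $f^2 = f$, $g^2 = g$ and $fg = gf$, obtained by specializing $x,y,z$ to $e$ in the identity $f^2(x) + fg(y) + g(z) = f(x) + gf(y) + g^2(z)$.

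Next I introduce the commuting orthogonal idempotent endomorphisms
\[ h_1 := fg, \quad h_2 := f - fg, \quad h_3 := g - fg, \quad h_4 := 1_A - f - g + fg, \]
which satisfy $\sum h_i = 1_A$ and $h_i h_j = \delta_{ij} h_i$. Let $A_i := \ima(h_i)$; each is an abelian subvariety of $A$. I would then show that the homomorphism
\[ \Phi : A_1 \times A_2 \times A_3 \times A_4 \longrightarrow A,
   \quad (x_1,x_2,x_3,x_4) \longmapsto x_1 + x_2 + x_3 + x_4 \]
is an isomorphism by exhibiting its inverse $x \mapsto (h_1(x),h_2(x),h_3(x),h_4(x))$; the verifications $\Phi \circ \Psi = \id_A$ and $\Psi \circ \Phi = \id$ follow formally from the orthogonality relations. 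This avoids any delicate scheme-theoretic kernel argument and is the main technical step I expect to be the most tedious, though conceptually clean.

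Set $B := A_1$, $A_l := A_2$, $A_r := A_3$, $A_0 := A_4$. On $A_1$ both $f$ and $g$ act as the identity, so $\mu(x,y) = x+y$; on $A_2$, $f = \id$ and $g = 0$, so $\mu(x,y) = x$; on $A_3$, $\mu(x,y) = y$; and on $A_4$, $f = g = 0$, so $\mu(x,y) = e$. This gives the desired product decomposition of $(A,\mu)$. Uniqueness follows because $f$ and $g$ are determined by $\mu$ and $e$, and the idempotent decomposition above is canonical.

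For (ii), let $\psi : (A,\mu) \to \cG$ be a homomorphism to an algebraic group. Since $\psi(e) = \psi(e)^2$ is idempotent in $\cG$, it equals $1_\cG$. Setting $y = e$ in $\psi(\mu(x,y)) = \psi(x)\psi(y)$ gives $\psi \circ f = \psi$, and similarly $\psi \circ g = \psi$; writing $x = x_1 + x_2 + x_3 + x_4$ in the coordinates above, these equalities force $\psi(x) = \psi(x_1)$. Hence $\psi$ factors (uniquely, since the projection $\varphi : A \to B$ is surjective) as $\varphi$ followed by $\psi\vert_B$, which is a morphism between abelian varieties sending the neutral element to the neutral element, hence a homomorphism of algebraic groups. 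Finally, $E(S)$ and $\kernel(S)$ are computed componentwise: idempotents are trivially $\{e\}$ in $(A_0,\mu_0)$ and $(B,+)$ and the whole space in $(A_l,\mu_l)$ and $(A_r,\mu_r)$; and the smallest two-sided ideal is $\{e\}$ in $(A_0,\mu_0)$ and the whole set in each of the other three factors. This yields the stated formulas.
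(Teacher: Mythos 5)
Your proof is correct and follows essentially the same route as the paper's: both reduce to the fact that $\mu(x,y)=f(x)+g(y)$ for endomorphisms $f,g$ of $(A,+)$ (the paper cites Mumford's corollary, you rederive it from rigidity), extract $f^2=f$, $g^2=g$, $fg=gf$ from associativity, and split $A$ into the four corresponding factors, your images of the orthogonal idempotents $fg$, $f-fg$, $g-fg$, $1-f-g+fg$ being exactly the paper's intersections of kernels and images. The only point to touch up is the phrase ``morphism between abelian varieties'' for $\psi\vert_B : B \to \cG$ (where $\cG$ is not assumed abelian); but the conclusion is immediate anyway, since a semigroup homomorphism between groups sending $1$ to $1$ is a group homomorphism.
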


\begin{proof}
\smartqed
(i) By \cite[Chap.~II, \S 4, Cor.~1]{Mumford}, the morphism 
$\mu : A \times A \to A$ satisfies 
\[ \mu(x,y) = \varphi(x) + \psi(y) + x_0, \]
where $x_0 \in A$ and $\varphi$, $\psi$ are endomorphisms of the
algebraic group $A$. Since $\mu(e,e)= e$ and 
$\varphi(e) = \psi(e) = e$, we have $x_0 = e$, i.e., 
$\mu(x,y) = \varphi(x) + \psi(y)$. 
Now the associativity of $\mu$ is equivalent to the equality
\[ \varphi \circ \varphi (x) + \varphi \circ \psi(y) + \psi(z) 
= \varphi (x) + \psi \circ \varphi(y) + \psi \circ \psi(z), \]
that is, to the equalities
\[ \varphi \circ \varphi = \varphi, \quad 
\varphi \circ \psi = \psi \circ \varphi, \quad
\psi \circ \psi = \psi. \]
This easily yields the desired decomposition, where 
$A_0 := \Kern(\varphi) \cap \Kern(\psi)$,
$A_l := \Ima(\varphi) \cap \Kern(\psi)$,
$A_r := \Kern(\varphi) \cap \Ima(\psi)$,
and $B := \Ima(\varphi) \cap \Ima(\psi)$,
so that $\varphi$ (resp. $\psi$) is the projection of $A$ to
$A_l \times B$ (resp. $A_r \times B$). 
The uniqueness of this decomposition follows from that of 
$\varphi$ and $\psi$. 

(ii) Let $\gamma : (A,\mu) \to \cG$ be a homomorphism to an
algebraic group. Then the image of $\gamma$ is a complete
irreducible variety, and hence generates an abelian subvariety 
of $\cG$. Thus, we may assume that $\cG$ is an abelian variety, 
with group law also denoted additively. As above, we have 
$\gamma(x) = \pi(x) + x'_0$, where $\pi : A \to \cG$ is a 
homomorphism of algebraic groups and $x'_0 \in \cG$. Since 
$\gamma(e)$ is idempotent, we obtain $x'_0 = 0$, i.e., 
$\gamma: (A,+) \to \cG$ is also a homomorphism. It follows readily
that $\gamma$ sends $A_0 \times A_l \times A_r \times \{ e \}$
to $0$. So $\gamma$ factors as $\varphi$ followed by 
a unique homomorphism $\gamma': B \to \cG$.
This proves the assertion on $\varphi$; those on $E(S)$
and $\kernel(S)$ are easily checked.
\qed
\end{proof}

\begin{proposition}\label{prop:albs}
Let $(S,\mu)$ be an irreducible algebraic semigroup, $e \in E(S)$, 
and $\alpha : S \to A(S)$ the Albanese morphism; assume that
$\alpha(e) = 0$.

\begin{enumerate}
\item[{\rm (i)}]{There exists a unique algebraic semigroup law $A(\mu)$ 
on $A(S)$ such that $\alpha$ is a homomorphism.}
\item[{\rm (ii)}]{Let $\varphi : (A(S),A(\mu)) \to B(S)$ be the universal 
homomorphism to an algebraic group. Then the map 
$eSe \to B(S)$, $x \mapsto \varphi(\alpha(x))$ is the Albanese 
morphism of $eSe$.}
\end{enumerate}

\end{proposition}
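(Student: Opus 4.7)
For part (i), the plan is to invoke the universal property of the Albanese morphism applied to the morphism $\alpha \circ \mu : S \times S \to A(S)$. Since $S \times S$ is irreducible with Albanese morphism the product $\alpha \times \alpha : S \times S \to A(S) \times A(S)$, the composition $\alpha \circ \mu$ (which sends $(e,e)$ to $0$) factors uniquely as $\alpha \circ \mu = A(\mu) \circ (\alpha \times \alpha)$ for a unique morphism $A(\mu) : A(S) \times A(S) \to A(S)$ sending $(0,0)$ to $0$; this identity says that $\alpha$ is a semigroup homomorphism for $A(\mu)$. Associativity of $A(\mu)$ is then obtained by comparing two expressions for $\alpha(xyz)$: both triple compositions agree after precomposition with $\alpha \times \alpha \times \alpha$, whose image is Zariski-dense in $A(S)^3$ (alternatively, one invokes uniqueness in the universal property applied to $S \times S \times S$). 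The same density argument gives uniqueness of $A(\mu)$.

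For part (ii), apply Proposition~\ref{prop:abel} to the algebraic semigroup $(A(S), A(\mu))$ with the idempotent $\alpha(e) = 0$ chosen as neutral element. This yields a decomposition $A(S) = A_0 \times A_l \times A_r \times B$ and identifies the universal homomorphism $\varphi : A(S) \to B(S)$ with the projection onto $B$. Writing $A(\mu)(u,v) = \epsilon(u) + \delta(v)$ as in that proposition, a direct calculation gives the key identity
\[ \alpha(exe) \;=\; \delta(\epsilon(\alpha(x))) \;=\; \varphi(\alpha(x)) \qquad (x \in S), \]
since $\delta \circ \epsilon$ is precisely the projection onto $B$ in the product decomposition. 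In particular $\alpha(eSe) \subseteq B$, and the restriction $\alpha\vert_{eSe} : eSe \to B$ coincides with $\varphi \circ \alpha\vert_{eSe}$.

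The universal property then goes as follows. Given a morphism $\beta : eSe \to C$ to an abelian variety (normalized so that $\beta(e) = 0$), extend $\beta$ to $\tilde\beta : S \to C$ by $\tilde\beta(x) := \beta(exe)$; this is a morphism because $x \mapsto exe$ is. The universal property of $\alpha$ gives a unique homomorphism $\bar\beta : A(S) \to C$ with $\tilde\beta = \bar\beta \circ \alpha$. The key identity above then rewrites this as $\bar\beta \circ \alpha = \bar\beta \circ \varphi \circ \alpha$, and Zariski-density of $\alpha(S)$ in $A(S)$ forces $\bar\beta = \bar\beta \circ \varphi$, so $\bar\beta = \gamma \circ \varphi$ for a unique $\gamma : B \to C$; restricting to $eSe$ gives the desired factorization $\beta = \gamma \circ (\varphi \circ \alpha\vert_{eSe})$. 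The one point that requires care — and which I expect is the main technical step — is uniqueness of $\gamma$: two such factorizations agree on $\alpha(eSe) = \varphi(\alpha(S))$, which is the image of the Zariski-dense subset $\alpha(S) \subseteq A(S)$ under the surjective morphism of complete varieties $\varphi : A(S) \to B$, hence is Zariski-dense in $B$. This identifies $\varphi \circ \alpha\vert_{eSe}$ with the Albanese morphism of $eSe$.
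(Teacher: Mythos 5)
Your overall architecture is sound, and for part (ii) it genuinely differs from the paper's route: the paper observes that the inclusions $eSe \subseteq eS \subseteq S$ admit retractions, so that $A(eSe) \to A(S)$ is a closed immersion, and then identifies $A(eSe)$ with the subgroup generated by the differences $\alpha(exe) - \alpha(eye)$, which the decomposition of Proposition \ref{prop:abel} shows to be exactly $B(S)$; you instead verify the universal property of the Albanese morphism of $eSe$ directly by extending a test morphism $\beta$ to $S$ via $x \mapsto \beta(exe)$. Both hinge on the same key computation $\alpha(exe) = \psi(\varphi(\alpha(x))) = \varphi(\alpha(x))$, and your verification is a legitimate alternative.

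There is, however, a recurring incorrect justification: you assert several times that $\alpha(S)$ is Zariski-dense in $A(S)$ (and that $\alpha(S)^3$ is dense in $A(S)^3$, and that $\varphi(\alpha(S))$ is dense in $B$). This is false in general. The paper only states that $A(S)$ is \emph{generated as a group} by the differences $\alpha(x) - \alpha(y)$; for instance, a smooth projective curve $S$ of genus $g \geq 2$ equipped with a trivial semigroup law $\mu_l$ or $\mu_{x}$ falls under the hypotheses of the proposition, and $\alpha(S)$ is then a curve inside the $g$-dimensional Jacobian, far from dense. Every one of your density appeals must therefore be replaced: for the uniqueness and associativity of $A(\mu)$, use the uniqueness clause of the universal property of $A(S \times S) = A(S) \times A(S)$ and $A(S)^3$ (which you do mention parenthetically as an alternative — that is the correct argument, not a backup); for the steps $\bar\beta = \bar\beta \circ \varphi$ and the uniqueness of $\gamma$, note that the maps being compared send $0$ to $0$, hence are group homomorphisms by the rigidity result for morphisms of abelian varieties, and that they agree on a subset generating the target group ($\alpha(S)$ generates $A(S)$, and $\varphi(\alpha(S))$ generates $B$ since $\varphi$ is a surjective homomorphism). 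With these substitutions each step goes through, so the gap is repairable, but as written the density claims are wrong for concrete instances of the proposition.
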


\begin{proof}
\smartqed
(i) follows from the functorial properties of the Albanese 
morphism (see \cite[Sec.~2]{Serre58}) by arguing as in the 
beginning of the proof of Proposition \ref{prop:alb}.

(ii) Consider the inclusions $eSe \subseteq eS \subseteq S$. Each of 
them admits a retraction, $x \mapsto xe$ (resp. $x \mapsto ex$).
Thus, the corresponding morphisms of Albanese varieties,
$A(eSe) \to A(eS) \to A(S)$, 
also admit retractions, and hence are closed immersions. So we may
identify $A(eSe)$ with the subgroup of $A(S)$ generated by the
differences $\alpha(exe) - \alpha(eye)$, where $x,y \in S$. But
$\alpha(exe) = A(\mu)(\alpha(e),A(\mu)(\alpha(x),\alpha(e)))$ and
$\alpha(e)$ is of course an idempotent of $(A(S),A(\mu))$. 
Hence $\alpha(e) = (e,a_l, a_r, e)$ in the decomposition of 
Proposition \ref{prop:abel}. Using that decomposition, we obtain
$\alpha(exe) = (e, a_l, a_r, b(x))$, where $b(x)$ denotes 
the projection of $\alpha(x)$ to $B(S)$. As a consequence,
$\alpha(exe) - \alpha(eye) = (e,e,e,b(x) - b(y))$; this yields 
the desired identification of $A(eSe)$ to $B(S)$.
\qed
\end{proof}

Combined with Proposition \ref{prop:alb}, the above result yields:

\begin{corollary}\label{cor:albs}
Let $S$ be an irreducible algebraic semigroup. 

\begin{enumerate}
\item[{\rm (i)}]{All the maximal submonoids of $S$ have the same Albanese 
variety, and all the maximal subgroups have isogenous Albanese 
varieties.}
\item[{\rm (ii)}]{The irreducible monoid $eSe$ is affine for all 
$e \in E(S)$ if $eSe$ is affine for some $e \in E(S)$.}
\end{enumerate}

\end{corollary}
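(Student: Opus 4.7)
The plan is to deduce both parts from the identification $A(eSe) \cong B(S)$ in Proposition \ref{prop:albs}(ii), together with the structural results of Section \ref{sec:iam} applied to the irreducible monoid $eSe$.

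For (i), recall from Remark \ref{rem:idem}(ii) that the maximal submonoids of $S$ are precisely the subsets $eSe$ for $e \in E(S)$. Fix such an $e$; by Proposition \ref{prop:albs}(ii), $A(eSe) \cong B(S)$, where $B(S)$ is the universal algebraic-group quotient of $(A(S), A(\mu))$. Although the semigroup law $A(\mu)$ on $A(S)$ depends on the base point chosen for $\alpha$ (normalized so that $\alpha(e) = 0$), a different idempotent $e'$ merely translates $\alpha$, and translation preserves the universal group quotient in Proposition \ref{prop:abel}; hence $B(S)$ is an intrinsic invariant of $S$, and every $A(eSe)$ is isomorphic to it. For the second half of (i), the maximal subgroups of $S$ are the unit groups $G(eSe)$, and by Proposition \ref{prop:alb} applied to the irreducible monoid $eSe$, the natural map $A(G(eSe)) \to A(eSe)$ is an isogeny; since all $A(eSe)$ are isomorphic to $B(S)$, all $A(G(eSe))$ are pairwise isogenous.

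For (ii), the key intermediate step is that an irreducible algebraic monoid $M$ is affine if and only if $A(M)$ is trivial. The forward direction: if $M$ is affine, it is linear by Example \ref{ex:mono}(ii), so its unit group $G$ is linear, hence $G = G_{\aff}$ and $A(G) = 0$; Proposition \ref{prop:alb} then forces $A(M) = 0$ via the surjection $\beta : A(G) \to A(M)$. Conversely, if $A(M) = 0$, then because $\beta$ is an isogeny of abelian varieties, $A(G) = 0$, whence $G = G_{\aff}$ is affine; as an irreducible monoid is unit dense, Theorem \ref{thm:aff} yields that $M$ is affine. Applying this to $M = eSe$ and using (i) to see that $A(eSe) \cong B(S)$ is independent of $e$, we conclude that $eSe$ is affine for some $e \in E(S)$ if and only if $B(S) = 0$, if and only if $eSe$ is affine for every $e \in E(S)$.

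The only delicate point is checking the translation-invariance of $B(S)$ used in (i); this is immediate from the universal property behind Proposition \ref{prop:abel}. Beyond this, the proof is a direct assembly of the cited structural results, so no serious obstacle is anticipated.
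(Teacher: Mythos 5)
Your proof is correct and follows essentially the route the paper intends: the paper states the corollary as an immediate consequence of Proposition \ref{prop:albs} combined with Proposition \ref{prop:alb}, and your argument is precisely that combination, with the two implicit details (the independence of $B(S)$ from the chosen idempotent, and the criterion that an irreducible monoid is affine if and only if its Albanese variety is trivial) correctly filled in.
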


\begin{remark}\label{rem:albs}
(i) With the notation and assumptions of Proposition 
\ref{prop:albs}, the morphism $\varphi \circ \alpha : S \to B(S)$ 
is the universal homomorphism to an abelian variety. 
Also, recall from Remark \ref{rem:uni} that there exists 
a universal homomorphism to an algebraic group, 
$\psi : S \to \cG(S)$, and that $\cG(S)$ is connected. 
It follows that $B(S)$ is the Albanese variety of $\cG(S)$.

(ii) Consider an irreducible algebraic semigroup $(S,\mu)$ 
and its rational Albanese map $\alpha_{\rat} : S - \to A(S)_{\rat}$.
If the image of $\mu: S \times S \to S$ meets the domain of 
definition of $\alpha_{\rat}$, then there exists a unique 
algebraic semigroup structure $A(\mu)$ on $A(S)_{\rat}$ such that 
$\alpha_{\rat}$ is a `rational homomorphism', i.e.,
$\alpha_{\rat}(\mu(x,y)) = A(\mu)(\alpha_{\rat}(x), \alpha_{\rat}(y))$ 
whenever $\alpha_{\rat}$ is defined at $x, y \in S$ and at $\mu(x,y)$
(as can be checked by the argument of Proposition \ref{prop:albs}). 
But this does not hold for an arbitrary $(S,\mu)$; for example, 
if $S \subseteq \bA^3$ is the affine cone over an elliptic curve
$E \subseteq \bP^2$ and if $\mu = \mu_0$. Here $0$, the origin of 
$\bA^3$, is the unique singular point of $S$, and $\alpha_{\rat}$
is the natural map $S \setminus \{ 0  \} \to E$.
\end{remark}

\subsection{Irreducible curves}
\label{subsec:ic}

In this subsection, we classify the irreducible algebraic semigroups
of dimension $1$; those having a nontrivial law (as defined in 
Example \ref{ex:semi} (i)) turn out to be algebraic monoids. 

Such semigroups include of course the connected algebraic groups 
of dimension $1$, presented in Example \ref{ex:mono} (iv). 
We now construct further examples: let $(a_1, \ldots, a_n)$ 
be a strictly increasing sequence of positive integers having 
no nontrivial common divisor, and consider the map
\[ \varphi : \bA^1 \longrightarrow \bA^n, \quad 
x \longmapsto (x^{a_1},\ldots,x^{a_n}). \]
Then $\varphi$ is a homomorphism of algebraic monoids, where
$\bA^1$ and $\bA^n$ are equipped with pointwise multiplication.
Also, one checks that the morphism $\varphi$ is finite; 
hence its image is a closed submonoid of $\bA^n$, containing 
the origin as its zero element. We denote this monoid by 
$M(a_1,\ldots,a_n)$, and call it an \emph{affine monomial curve}; 
it only depends on the abstract submonoid of $(\bZ,+)$ generated 
by $a_1,\ldots,a_n$. One may check that $\varphi$ restricts 
to an isomorphism 
$\bA^1 \setminus \{ 0 \} \stackrel{\cong}{\longrightarrow} 
M(a_1,\ldots, a_n) \setminus \{ 0 \}$;
also, $M(a_1,\ldots,a_n)$ is singular at the origin
unless $\varphi$ is an isomorphism, i.e., unless $a_1 = 1$.

\begin{theorem}\label{thm:curve}
Let $S$ be an irreducible curve, and $\mu$ a nontrivial algebraic 
semigroup structure on $S$. Then $(S,\mu)$ is either an algebraic 
group or an affine monomial curve.
\end{theorem}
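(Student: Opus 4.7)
My plan is to first show that $(S,\mu)$ must be a monoid, then classify all $1$-dimensional irreducible algebraic monoids. For the first step, pick an idempotent $e\in E(S)$, which exists by Proposition~\ref{prop:idem}. Since $eSe$ is closed and irreducible in the irreducible curve $S$, it equals either all of $S$ (in which case $e$ is a neutral element and $(S,\mu)$ is a monoid) or the single point $\{e\}$. Suppose the latter holds for every idempotent, so each is minimal by Proposition~\ref{prop:mini}(ii). Then Proposition~\ref{prop:SeS}(ii) identifies $SeS\cong{_e{}Se}\times\{e\}\times eS_e$ as a variety, and the dimension bound $\dim SeS\le 1$ forces each outer factor to be either $\{e\}$ or all of $S$, with at most one equal to $S$. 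If exactly one outer factor is $S$, Proposition~\ref{prop:SeS}(iii) identifies the law on $SeS=S$ with $\mu_l$ or $\mu_r$, contradicting nontriviality of $\mu$. So ${_e{}Se}=eS_e=\{e\}$, whence $SeS=\{e\}$, $e$ is a zero element, and $E(S)=\{e\}$.

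The zero-element subcase is the main obstacle, and I would handle it by passing to the normalization $\eta:\tilde S\to S$. Nontriviality forces $\mu$ to have $1$-dimensional image, hence to be dominant, so Proposition~\ref{prop:norm}(i) gives a lifted semigroup law $\tilde\mu$ on $\tilde S$ with $\eta$ a homomorphism and $\eta(E(\tilde S))=E(S)=\{e\}$ (Corollary~\ref{cor:idem}). For any $\tilde e\in E(\tilde S)$, the assumption $\tilde e\tilde S\tilde e=\tilde S$ (that is, $\tilde e$ a neutral element of $\tilde S$) would give $\eta(\tilde y)=\eta(\tilde e\cdot\tilde y)=\mu(e,\eta(\tilde y))=e$ for all $\tilde y$, contradicting surjectivity of $\eta$ onto the curve $S$. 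So the same three-case analysis applies to the smooth curve $\tilde S$; the first two subcases trivialize $\tilde\mu$ and hence $\mu$, reducing the argument to ruling out a nontrivial semigroup law on a smooth irreducible $1$-dimensional curve whose unique idempotent is a zero. On an elliptic curve, Proposition~\ref{prop:abel} forces the only nontrivial law to be the group law, whose unique idempotent is a neutral element rather than a zero; on $\bP^1$ a direct computation with bi-homogeneous morphisms $\bP^1\times\bP^1\to\bP^1$ together with Mumford--Ramanujam disposes of the case; on smooth affine curves one uses the classification of associative polynomial laws on $\bA^1$ mentioned in the Introduction together with rigidity of self-morphisms of smooth curves.

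Once $(S,\mu)$ is a monoid, Theorem~\ref{thm:unit} shows its unit group $G$ is open in $S$, hence a connected algebraic group of dimension~$1$, and the classical list after Example~\ref{ex:mono}(iv) gives $G\in\{\bG_a,\bG_m,E\}$ for some elliptic curve $E$. If $G=E$, then $E$ is complete and therefore closed in $S$, so open and closed in the irreducible $S$, giving $S=E$, an algebraic group. If $G=\bG_a$ or $\bG_m$, Theorem~\ref{thm:aff} makes $S$ affine; each boundary point $x_0\in S\setminus G$ has $G$-orbit contained in the finite set $S\setminus G$, hence of dimension~$0$, so $x_0$ is $G$-fixed on both sides, and density of $G$ in $S$ then makes $x_0$ a two-sided zero. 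For $G=\bG_a$, such a zero would realize $S$ set-theoretically as $\bP^1$, contradicting affineness, so $S=\bG_a$. For $G=\bG_m$, the normalization $\eta:\bA^1\to S$ is a finite birational homomorphism of multiplicative monoids, and the induced inclusion $\cO(S)\hookrightarrow k[t]$ is stable under the comultiplication $t\mapsto t\otimes t$; this forces $\cO(S)$ to be spanned by monomials $\{t^a:a\in A\}$ for some submonoid $A\subseteq\bZ$, $A\subseteq\{0,1,2,\ldots\}$, with finite complement (by finiteness of $\eta$), and picking generators $a_1,\ldots,a_n$ of $A$ identifies $S$ with the affine monomial curve $M(a_1,\ldots,a_n)$. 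The step I expect to present the main difficulty is the smooth-curve zero-element analysis of the second paragraph, since it requires curve-specific tools rather than the uniform machinery used elsewhere in the argument.
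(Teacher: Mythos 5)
Your overall architecture (idempotents are neutral or zero; classify the monoids; derive a contradiction when the unique idempotent is a zero) matches the paper's, and your first and third paragraphs are essentially sound. But the decisive step --- the one you yourself flag as the main difficulty --- is missing an argument, not merely a hard computation. After reducing to a smooth irreducible \emph{affine} curve carrying a nontrivial law whose unique idempotent is a zero, you propose to invoke ``the classification of associative polynomial laws on $\bA^1$'' together with ``rigidity of self-morphisms of smooth curves.'' The former is only asserted (without proof) in the introduction, applies to $\bA^1$ alone and not to an arbitrary smooth affine curve (which at this stage could a priori be any curve whose smooth compactification has positive genus), and proving it is essentially the problem you are trying to solve; the latter is not a tool that produces the needed contradiction. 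The paper's Step 4 supplies the missing idea: let $S$ act on $\cO(S)$ by right translation; this action preserves the maximal ideal $\fm$ of the zero and all its powers, and smoothness identifies $\fm/\fm^n$ with $k[t]/t^nk[t]$, giving homomorphisms of algebraic semigroups $\psi_n : S \to t k[t]/t^n k[t]$ (composition of truncated polynomials). Nontriviality forces some $\psi_n \neq 0$; associativity forces the smallest such $n$ to be $2$, yielding a nonconstant (hence surjective) homomorphism $S \to (\bA^1,\times)$; pulling back the idempotent $1$ via Proposition \ref{prop:idem} and Corollary \ref{cor:idem} produces a nonzero idempotent, hence a neutral element. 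Without something of this kind your case analysis does not close.

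Two smaller defects. First, your treatment of the complete subcase covers only elliptic curves and $\bP^1$; a smooth complete curve of genus at least $2$ with a nontrivial law must also be excluded (the paper handles all complete irreducible curves uniformly with the rigidity lemma in its Step 2; alternatively the Jacobian argument of Remark \ref{rem:cur} works for nonrational smooth curves). Second, in the $G = \bG_a$ branch, the claim that adjoining a zero ``would realize $S$ set-theoretically as $\bP^1$'' is false: there are many affine irreducible curves containing $\bA^1$ as a dense open subset with exactly one extra point. The correct argument is that $\cO(S) \subseteq \cO(\bG_a) = k[x]$ is stable under translations, and a translation-stable subalgebra containing a nonconstant polynomial contains a degree-one polynomial (take successive differences $f(x+a)-f(x)$), hence equals $k[x]$, so no proper affine equivariant embedding of $\bG_a$ exists. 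A very minor point in your first paragraph: to conclude that the outer factors ${_e{}Se}$ and $eS_e$ are each a point or all of $S$ you need them to be irreducible, which follows because $SeS$ is the (closed) image of the irreducible $S \times S$.
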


\begin{proof}
\smartqed
As the arguments are somewhat long and indirect, we divide them
into four steps.

\medskip

\noindent
{\bf Step 1:} we show that every idempotent of $S$ is either 
a neutral or a zero element.

Let $e \in E(S)$. Since $Se$ is a closed irreducible subvariety 
of $S$, it is either the whole $S$ or a single point; in the 
latter case, $Se = \{ e \}$. Thus, one of the following cases occurs:

\begin{enumerate}
\item[(i)]{$Se = eS = S$. Then any $x \in S$ satisfies $xe = ex = x$,
i.e., $e$ is the neutral element.}
\item[(ii)]{$Se = \{ e \}$ and $eS = S$. Then for any $x,y \in S$, we have
$xe = e$ and $ey = y$. Thus, $xy = xey = ey = y$. So $\mu = \mu_r$
in the notation of Example \ref{ex:semi} (i), a contradiction 
since $\mu$ is assumed to be nontrivial.}
\item[(iii)]{$eS = \{ e \}$ and $Se = S$. This case is excluded similarly.}
\item[(iv)]{$Se = eS = \{ e \}$. Then $e$ is the zero element of $S$.}
\end{enumerate}

\medskip

\noindent
{\bf Step 2:} we show that if $S$ is complete, then it is 
an elliptic curve.

For this, we first reduce to the case where $S$ has a zero
element. Otherwise, $S$ has a neutral element by Step 1. Hence
$S$ is a monoid with unit group $G$ being $\bG_a$, 
$\bG_m$ or an elliptic curve, in view of the classification of
connected algebraic groups of dimension $1$. In the latter case,
$G$ is complete and hence $G = S$. On the other hand, 
if $G = \bG_a$ or $\bG_m$, then $S \setminus G$ is a nonempty 
closed subsemigroup of $S$ in view of Proposition \ref{prop:unit}. 
Hence $S \setminus G$ contains an idempotent, which must be 
the zero element of $S$ by Step 1. This yields the desired reduction.

The semigroup law $\mu : S \times S \to S$ sends $S \times \{ 0 \}$
to the point $0$. By the rigidity lemma (see e.g.
\cite[Chap.~II, \S 4]{Mumford}), it follows that 
$\mu(x,y) = \varphi(y)$ for some morphism $\varphi : S \to S$.
The associativity of $\mu$ yields
\[ \varphi(z) = (xy)z = x(yz) = \varphi(yz)= \varphi(\varphi(z)) \]
for all $x,y,z \in S$; hence $\varphi$ is a retraction to its
image. Since $S$ is an irreducible curve, either $\varphi = \id$
or the image of $\varphi$ consists of a single point $x$. 
In the former case, $\mu = \mu_r$, whereas $\mu = \mu_x$ 
in the latter case. Thus, the law $\mu$ is trivial, a contradiction.

\medskip

\noindent
{\bf Step 3:} we show that if $S$ is an affine monoid, then
it is isomorphic to $\bG_a$, $\bG_m$ or an affine monomial curve.

We may view $S$ as an equivariant embedding of its unit group
$G$, and that group is either $\bG_a$ or $\bG_m$. Since 
$\bG_a \cong \bA^1$ as a variety, any affine equivariant embedding 
of $\bG_a$ is $\bG_a$ itself. So we may assume that $G = \bG_m$. 
Then the coordinate ring $\cO(S)$ is a subalgebra of 
$\cO(\bG_m) = k[x,x^{-1}]$, stable under 
the natural action of $\bG_m$. It follows that $\cO(S)$ has a basis
consisting of Laurent monomials, and hence that
\[ \cO(S) = \bigoplus_{n \in \cM} x^n, \]
where $\cM$ is a submonoid of $(\bZ,+)$. Moreover, since $\bG_m$
is open in $S$, the fraction field of $\cO(S)$ is the field
of rational functions $k(x)$; it follows that $\cM$ generates the
group $\bZ$. Thus, either $\cM = \bZ$ or $\cM$ is generated by
finitely many integers, all of the same sign and having no nontrivial 
common divisor. In the former case, $S = \bG_m$; in the latter case, 
$S$ is an affine monomial curve.

\medskip

\noindent
{\bf Step 4:} in view of Step 2, we may assume that the irreducible
curve $S$ is noncomplete, and hence is affine. Then it suffices 
to show that $S$ has a nonzero idempotent: then $S$ is an algebraic 
monoid by Step 1, and we conclude by Step 3. We may further assume 
that $S$ is nonsingular: indeed, by the nontriviality assumption, 
the semigroup law $\mu : S \times S \to S$ is dominant. 
Using Proposition \ref{prop:norm}, it follows that the normalization 
$\tilde S$ (an irreducible nonsingular curve) has a compatible algebraic 
semigroup structure; then the image of a nonzero idempotent 
of $\tilde S$ is a nonzero idempotent of $S$.

So we assume that $S$ is an affine irreducible nonsingular semigroup 
of dimension $1$, having a zero element $0$, and show that 
$S$ has a neutral element. We use the ``right regular representation'' 
of $S$, i.e., its action on the coordinate ring $\cO(S)$ by right 
multiplication; specifically, an arbitrary point $x \in S$ acts on 
$\cO(S)$ by sending a regular function $f$ on $S$ to the regular 
function $x \cdot f : y \mapsto f(yx)$. This yields a map 
\[ \varphi : S \longrightarrow \End(\cO(S)), 
\quad x \longmapsto  x \cdot f \]
which is readily seen to be a homomorphism of abstract
semigroups. Moreover, the action of $S$ on $\cO(S)$ stabilizes
the maximal ideal $\fm$ of $0$, and all its powers $\fm^n$. 
This defines compatible homomorphisms of abstract semigroups
\[ \varphi_n : S \longmapsto \End(\fm/\fm^n) \quad (n \geq 1). \]
Since $S$ is a nonsingular curve, we have compatible 
isomorphisms of $k$-algebras 
\[ \fm/\fm^n \cong k[t]/t^n k[t], \]
where $t$ denotes a generator of the maximal ideal $\fm \cO_{S,0}$
of the local ring $\cO_{S,0}$; the right-hand side is the algebra 
of truncated polynomials at the order $n$. Thus, an endomorphism
$\gamma$ of $\fm/\fm^n$ is uniquely determined by $\gamma(\bar{t})$,
where $\bar{t}$ denotes the image of $t$ mod $t^n k[t]$. 
Moreover, the assignment $\gamma \mapsto \gamma(\bar{t})$
yields compatible isomorphisms of abstract semigroups
\[ \End(\fm/\fm^n) \stackrel{\cong}{\longrightarrow} 
t k[t]/t^n k[t], \]
where the semigroup law on the right-hand side is
the composition of truncated polynomials. Thus, we obtain 
compatible homomorphisms of abstract semigroups
\[ \psi_n : S \longrightarrow t k[t]/t^n k[t]. \]
Clearly, the right-hand side is an algebraic semigroup. Moreover,
$\psi_n$ is a morphism: indeed, for any $f \in \cO(S)$, we have
$f(yx) = \sum_{i \in I} f_i(x) g_i(y)$ for some finite collection
of functions $f_i, g_i \in \cO(S)$ (since the semigroup law is
a morphism). In other words, $x \cdot f = \sum_{i \in I} f_i(x) g_i$.
Thus, the matrix coefficients of the action of $x$ in $\cO(S)/\fm^n$,
and hence in $\fm/\fm^n$, are regular functions of~$x$.

We claim that there exists $n\geq 1$ such that $\psi_n \neq 0$.
Otherwise, we have $\varphi_n(x) = 0$ for all $n \geq 1$ and 
all $x \in S$. Since $\bigcap_n \fm^n = \{ 0 \}$, it follows that 
$\varphi(x)$ sends $\fm$ to $0$. But $\cO(S) = k \oplus \fm$,
where the line $k$ of constant functions is fixed pointwise
by $\varphi(x)$. Hence $\varphi(x) = \varphi(0)$ for all $x$, 
i.e., $f(yx) = f(0)$ for all $f \in \cO(S)$ and all $x,y \in S$. 
Thus, $yx = 0$, i.e., $\mu = \mu_0$; a contradiction.

Now let $n$ be the smallest integer such that $\psi_n \neq 0$.
Then $\psi_n$ sends $S$ to the quotient $t^{n-1} k[t]/t^nk[t]$, 
i.e., to the semigroup of endomorphisms of the algebra 
$k[t]/t^n k[t]$ given by $\bar{t} \mapsto c\bar{t}^{n-1}$, where $c \in k$.
If $n \geq 3$, then the composition of any two such endomorphisms
is $0$, and hence $\psi_n(xy) = 0$ for all $x,y \in S$. Thus,
$xy$ belongs to the fiber of $\psi_n$ at $0$, a finite set
containing $0$. Since $S$ is irreducible, it follows that 
$xy = 0$, i.e., $\mu = \mu_0$; a contradiction. Thus, we must have 
$n = 2$, and we obtain a nonconstant morphism
$\psi = \psi_2: S \to \bA^1$, where the semigroup law on $\bA^1$ 
is the multiplication. The image of $\psi$ contains $0$ and a nonempty
open subset $U$ of the unit group $\bG_m$. Then $U U = \bG_m$
and hence $\psi$ is surjective. By Proposition \ref{prop:idem}, 
it follows that there exists an idempotent $e \in S$ such that 
$\psi(e) = 1$. Then $e$ is the desired nonzero idempotent.
\qed
\end{proof}

\begin{remark}\label{rem:cur}
One may also deduce the above theorem from the description of
algebraic semigroup structures on abelian varieties 
(Proposition \ref{prop:abel}), when the irreducible curve $S$ 
is assumed to be nonsingular and nonrational. Then the Albanese
morphism of $S$ is a locally closed embedding in its Jacobian
variety $A$. It follows that $A$ has no trivial summand $A_0$, 
$A_l$ or $A_r$ (otherwise, the projection to that summand is 
constant since $\mu$ is nontrivial; as the differences of points 
of $S$ generate the group $A$, this yields a contradiction). 
In other words, the inclusion of $S$ into $A$ is a homomorphism 
for a suitable choice of the origin of $A$. This implies 
that $S = A$, and we conclude that $S$ is an elliptic curve 
equipped with its group law.
\end{remark}

\subsection{Complete irreducible varieties}
\label{subsec:civ}

In this subsection, we obtain a description of all complete irreducible 
algebraic semigroups, analogous to that of the kernels of algebraic
semigroups presented in Proposition \ref{prop:SeS}:

\begin{theorem}\label{thm:cisg}
There is a bijective correspondence between the following objects:

\begin{itemize}
\item{the triples $(S,\mu,e)$, where $S$ is a complete 
irreducible variety, $\mu$ an algebraic semigroup structure 
on $S$, and $e$ an idempotent of $(S,\mu)$,}
\item{the tuples $(X,Y,G,\iota,\rho,x_o,y_o)$, where $X$ 
(resp. $Y$) is a complete irreducible variety equipped with 
a base point $x_o$ (resp. $y_o$), $G$ is an abelian variety, 
$\iota: X \times G \times Y \to S$ is a closed immersion, and
$\rho : S \to X \times G \times Y$ a retraction of $\iota$.}
\end{itemize}

This correspondence assigns to any such tuple, the algebraic 
semigroup structure $\nu$ on $X \times G \times Y$ defined by
\[ \nu((x,g,y), (x',g',y')) := (x, g g', y') \]
and then the algebraic semigroup structure $\mu$ on $S$ defined by
\[ \mu(s,s') :=  \iota(\nu(\rho(s), \rho(s'))). \]
The idempotent is $e := \iota(x_o,1_G,y_o)$. Moreover, $\iota$
and $\rho$ are homomorphisms of algebraic semigroups. 
\end{theorem}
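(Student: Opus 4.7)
Given a tuple $(X,Y,G,\iota,\rho,x_o,y_o)$, the verification that $\mu := \iota\circ\nu\circ(\rho\times\rho)$ is associative with idempotent $e := \iota(x_o,1_G,y_o)$ is a short direct check: associativity of $\nu$ is immediate, and $\rho\circ\iota = \id$ propagates it to $\mu$; that $e$ is idempotent and that $\iota, \rho$ are homomorphisms of semigroups follow similarly. So the substance of the theorem is the reverse direction.

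Starting from $(S,\mu,e)$, I would take $G := eSe$, $X := {_e{}Se}$, $Y := eS_e$, $x_o = y_o := e$, and $\iota(x,g,y) := xgy$. My first step is to establish that $G$ is an abelian variety: $G$ is closed in complete $S$, hence complete; by Remark \ref{rem:idem}(ii) it is an irreducible algebraic monoid with neutral $e$; by Theorem \ref{thm:maff} its affine part is closed in complete $G$, hence complete and affine, so a single point, and therefore $G$ equals the abelian variety furnished by Chevalley. As a consequence, every idempotent of $S$ is minimal: any $f \leq e$ satisfies $f = efe \in G$, and the only idempotent of a group is its identity, so $f = e$. This lets me apply Proposition \ref{prop:SeS} to the minimal idempotent $e$, which identifies $SeS$ with $X \times G \times Y$ via $\iota$, the induced semigroup law being $(x,g,y)(x',g',y') = (x, g\pi(y,x')g', y')$ with $\pi: Y \times X \to G$, $\pi(y,x') = yx'$. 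Completeness and irreducibility of $X$ and $Y$ follow from those of the irreducible $SeS \cong X \times G \times Y$. The variety retraction $\rho_e: S \to SeS$, $s \mapsto s(ese)^{-1}s$, from Proposition \ref{prop:SeS}(i), composed with the inverse of the isomorphism $\iota: X \times G \times Y \to SeS$, gives the candidate $\rho$; explicitly $\rho(s) = (se(ese)^{-1}, ese, (ese)^{-1}es)$.

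What remains, and what constitutes the main obstacle, is to show (i) that $\pi \equiv 1_G$, so the law on $SeS$ is exactly $\nu$; and (ii) the key identity $ss' = ses'$ in $S$, which ensures both that $\mu(S\times S) \subseteq SeS$ and that $\rho$ is a \emph{semigroup} (not merely variety) retraction. For (i), $\pi$ is a morphism from the complete product $Y \times X$ to the abelian variety $G$, vanishing on $\{y_o\} \times X$ and $Y \times \{x_o\}$ by Remark \ref{rem:simple}(i), so the Rigidity Lemma forces $\pi \equiv 1_G$. For (ii), the same principle applied to the morphism $f: S \times S \to G$, $f(s,s') := e(ss')e \cdot (e(ses')e)^{-1}$, which vanishes on $\{e\}\times S$ and $S\times\{e\}$, gives $e(ss')e = e(ses')e$. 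Combining this middle-component equality with the Albanese-level equality $\alpha(ss') = \alpha(ses')$ produced by Propositions \ref{prop:albs} and \ref{prop:abel}, and with parallel rigidity arguments composing $\mu$ and $\bar\mu(s,s') := ses'$ with the retractions $R_e$ and $L_e$, I would piece together the full identity in $S$. The most delicate part is this last patching: rigidity produces equalities in abelian-variety targets, and assembling them into a single equality in the possibly non-affine, non-group variety $S$ is where the structural results of the previous paragraph are indispensable. Once the key identity is established, the formula $\iota\nu(\rho(s),\rho(s')) = ses' = ss'$ closes the loop.
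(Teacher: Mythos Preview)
Your overall plan is sound and you have correctly isolated the crux: the identity $ss' = ses'$ on all of $S$. But your proposed route to that identity has a genuine gap. Getting $e(ss')e = e(ses')e$ in $G$, and analogous equalities after composing with the Albanese or with $L_e$, $R_e$, only yields equalities of the images of $ss'$ and $ses'$ in various abelian-variety quotients of $S$. Since $S$ itself is not a group and these quotients do not jointly separate points of $S$, there is no mechanism to ``patch'' them into an equality in $S$; you acknowledge this is delicate, but in fact no concrete argument is given, and I do not see how to complete it along those lines.

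The paper avoids this entirely by proving the identity \emph{first}, with target $S$ rather than an abelian variety. The retraction $\varphi: S \to eS$, $y \mapsto ey$, has connected fibers (Lemma~\ref{lem:retract}: a retraction of a complete irreducible variety satisfies $\varphi_*\cO_S = \cO_{eS}$). The multiplication $\mu: S \times S \to S$ sends $\{e\} \times F$ to a single point for every such fiber $F$, so by the rigidity lemma $\mu(\{x\} \times F)$ is a point for each $x \in S$; since $y$ and $ey$ lie in the same fiber, $xy = x(ey)$. This is Lemma~\ref{lem:idem}, and everything else---including that $eSe$ is an abelian variety (Lemma~\ref{lem:abel}) and that $\pi \equiv 1_G$---falls out immediately, reversing the order you chose. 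Your use of Theorem~\ref{thm:maff} to show $eSe$ is abelian does work, but it is unnecessary once the identity is in hand: the identity forces $e$ to be the unique idempotent of $eSe$, whence $eSe$ is a complete connected group by Proposition~\ref{prop:unit}(iii).
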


The inverse correspondence will be constructed at the end of the
proof. We begin that proof with three preliminary results.

\begin{lemma}\label{lem:retract}
Let $\varphi : X \to Y$ be a morphism of varieties, where $X$ is complete 
and irreducible; assume that $\varphi$ has a section (for example, $\varphi$
is a retraction of $X$ to a subvariety $Y$). 
Then $Y$ is complete and irreducible as well. Moreover, the map 
$\varphi^{\#} : \cO_Y \to \varphi_*(\cO_X)$ is an isomorphism;
in particular, the fibers of $\varphi$ are connected.
\end{lemma}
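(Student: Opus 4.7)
The plan is to proceed in three steps. First, the section $\sigma : Y \to X$ is a closed immersion, since any section of a separated morphism is one; hence $\sigma(Y)$ is a closed subvariety of $X$, isomorphic to $Y$ via $\sigma$, and complete because $X$ is. So $Y$ is complete. Moreover $\varphi$ is surjective, as $y = \varphi(\sigma(y))$ for every $y \in Y$; so $Y$ is the continuous image of the irreducible $X$, and therefore irreducible.

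Next, for the statement that $\varphi^{\#} : \cO_Y \to \varphi_* \cO_X$ is an isomorphism, the injectivity (with a one-sided inverse) is formal: applying $\varphi_*$ to $\sigma^{\#} : \cO_X \to \sigma_* \cO_Y$ produces a map $\varphi_* \sigma^{\#} : \varphi_* \cO_X \to (\varphi \circ \sigma)_* \cO_Y = \cO_Y$, and the composition $(\varphi_* \sigma^{\#}) \circ \varphi^{\#} = (\varphi \circ \sigma)^{\#} = \id_{\cO_Y}$. For the remaining direction I would appeal to the Stein factorization. Note first that $\varphi$ is proper, since $X \to \Spec(k)$ is proper and $Y \to \Spec(k)$ is separated. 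Hence $\cA := \varphi_* \cO_X$ is a coherent $\cO_Y$-algebra, and the Stein factorization reads
\[ \varphi : X \stackrel{\varphi'}{\longrightarrow} Y' := \Spec(\cA) \stackrel{\pi}{\longrightarrow} Y, \]
with $\varphi'$ surjective and with connected fibers, and $\pi$ finite. The given section yields a section $\sigma' := \varphi' \circ \sigma : Y \to Y'$ of $\pi$, which is itself a closed immersion.

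Now $\sigma'(Y)$ is closed and irreducible in $Y'$ of dimension $\dim(Y) = \dim(Y')$ (since $\pi$ is finite), while $Y'$ is topologically irreducible as the image of the irreducible $X$ under the surjective $\varphi'$. Hence $\sigma'(Y) = Y'$ set-theoretically. Furthermore $Y'$ is reduced: for any open $V \subseteq Y$, $\cO_{Y'}(\pi^{-1}(V)) = \cO_X(\varphi^{-1}(V))$, which is reduced because $X$ is a variety. So the closed immersion $\sigma'$ is surjective onto a reduced scheme, hence an isomorphism. Consequently $\pi = (\sigma')^{-1}$ is an isomorphism, which is exactly the statement that $\varphi^{\#}$ is an isomorphism. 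Connectedness of fibers is then immediate: $\varphi = \pi \circ \varphi'$ with $\pi$ an isomorphism, so $\varphi$ inherits the connected fibers of $\varphi'$.

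The main subtlety I expect to watch for is the reducedness of $Y'$: without it, $\sigma'$ would only identify $Y$ with $Y'_{\red}$, leaving possibly nilpotents in $\cA$ and thus a nontrivial kernel in the retraction $\varphi_* \cO_X \to \cO_Y$. The completeness of $X$ (combined with $Y$ being separated) is what makes $\cA$ coherent so that $\Spec(\cA)$ behaves as a scheme finite over $Y$, while the variety (reduced) structure of $X$ is what kills any nilpotents in $\cA$; both hypotheses are essential to the argument.
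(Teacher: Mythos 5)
Your argument is correct and follows essentially the same route as the paper: both pass to the Stein factorization $X \to \Spec(\varphi_*\cO_X) \to Y$, observe that the given section of $\varphi$ induces a section of the finite part, and conclude that the finite part is an isomorphism using irreducibility (and reducedness) of the intermediate scheme. You simply spell out in more detail the dimension count and the reducedness of $\Spec(\varphi_*\cO_X)$ that the paper leaves implicit.
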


\begin{proof}
\smartqed
Note that $\varphi$ is surjective, since it admits a section. This readily yields 
the first assertion.

Next, consider the Stein factorization of $\varphi$ as the composition
\[ X \stackrel{\varphi'}{\longrightarrow} X' 
\stackrel{\psi}{\longrightarrow} Y, \]
where $\varphi'$ is the natural morphism to the Spec of the sheaf 
of $\cO_Y$-algebras $\varphi_*(\cO_X)$, and $\psi$ is finite (see 
\cite[Cor.~III.11.5]{Hartshorne}). Then $\varphi'$ is surjective, and hence 
$X'$ is a complete irreducible variety.  Also, given a section $\sigma$ 
of $\varphi$, the map $\varphi' \circ \sigma$ is a section 
of $\psi$. In view of the irreducibility of $X'$ and the finiteness of $\psi$, 
it follows that $\psi$ is an isomorphism; this yields the second assertion.
\qed
\end{proof}

\begin{lemma}\label{lem:idem}
Let $S$ be a complete irreducible algebraic semigroup, and 
$e$ an idempotent of $S$. Then $x y = x e y$ for all $x,y \in S$.
\end{lemma}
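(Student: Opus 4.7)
The plan is to exploit that the self-map $\rho \colon S \to S$, $y \mapsto ey$, is a retraction onto the closed subvariety $eS$ (closed by Remark~\ref{rem:idem}), since $\rho^2 = \rho$ and $\rho$ is the identity on $eS$. Applying Lemma~\ref{lem:retract} to $\rho \colon S \to eS$, we obtain that $eS$ is complete and irreducible and that every set-theoretic fiber of $\rho$ is connected; endowed with its reduced structure, such a fiber is a reduced, connected, complete subvariety of $S$ and therefore satisfies $H^0(\cdot,\cO)=k$.

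Fix $y \in S$ and set $z := ey \in eS$. Both $y$ and $ey$ lie in the fiber $F := \rho^{-1}(z)$, since $\rho(y) = ey = z$ and $\rho(ey) = e^2 y = z$. Consider the morphism
\[
f \colon F \times S \longrightarrow S, \quad (y',x) \longmapsto x y',
\]
obtained by restricting $\mu$. For every $y' \in F$ we have $f(y',e) = ey' = z$, so $f$ is constant on $F \times \{e\}$.

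Mumford's rigidity lemma (see \cite[Chap.~II, \S 4]{Mumford}) then applies to $f$, since $F$ is complete with $H^0(F,\cO_F)=k$, and forces $f$ to factor through the projection $\mathrm{pr}_2 \colon F \times S \to S$. Equivalently, $xy'$ depends only on $x$ as $y'$ ranges over $F$. Specializing to $y' = y$ and $y' = ey$ (both in $F$) and invoking associativity, we obtain $xy = x(ey) = xey$ for every $x \in S$; since $y$ was arbitrary, this is the desired identity.

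The only delicate point is verifying the hypotheses of the rigidity lemma for $F$, and here Lemma~\ref{lem:retract} does the essential work: it provides the connectedness of $F$, which together with its reducedness and completeness as a closed subvariety of $S$ yields $H^0(F,\cO_F) = k$.
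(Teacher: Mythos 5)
Your proof is correct and follows essentially the same route as the paper: both use the retraction $x \mapsto ex$ onto $eS$, invoke Lemma~\ref{lem:retract} for connectedness of its fibers, and then apply the rigidity lemma to the multiplication map restricted to (a fiber) $\times\, S$, using that $e$ collapses the fiber to a point. Your write-up is in fact slightly more explicit than the paper's about why the fiber satisfies $H^0(F,\cO_F)=k$, which is the hypothesis the rigidity lemma actually needs.
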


\begin{proof}
\smartqed
Recall that the map $\varphi : S \to eS$, $x \mapsto ex$ 
is a retraction. Thus, its fibers are connected by Lemma 
\ref{lem:retract}. Let $F$ be a (set-theoretic) fiber.
Then the morphism $\mu : S \times S \to S$, 
$(x,y) \mapsto xy$ sends $\{ e \} \times F$ to a point. 
By the rigidity lemma (see e.g. \cite[Chap.~II, \S 4]{Mumford}),
$\mu(\{ x \} \times F)$ consists of a single point for any $x \in S$.
Thus, the map $y \mapsto xy$ is constant on the fibers of $\varphi$.
Since $\varphi(y) = \varphi(ey)$ for all $y \in S$, this yields the
statement.
\qed
\end{proof}

\begin{lemma}\label{lem:abel}
Keep the assumptions of the above lemma.

\begin{enumerate}
\item[{\rm (i)}]{The closed submonoid $e S e$ of $S$ is an abelian variety.}
\item[{\rm (ii)}]{The map $\varphi : S \to eSe$, $x \mapsto exe$ 
is a retraction of algebraic semigroups.}
\item[{\rm (iii)}]{The above map $\varphi$ is the universal homomorphism 
to an algebraic group.}
\end{enumerate}

\end{lemma}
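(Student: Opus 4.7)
My plan is to dispatch (ii) and (iii) quickly using Lemma~\ref{lem:idem} and the existing machinery, and to concentrate the argument on (i), whose content is that the complete irreducible monoid $eSe$ must be a group.

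For (ii), I would check two things directly. That $\varphi$ is a retraction is immediate from $ex = xe = x$ for $x \in eSe$, which gives $\varphi(x) = exe = x$. For the homomorphism property, I compute $\varphi(xy) = exye$ and $\varphi(x)\varphi(y) = (exe)(eye) = exeye$; the equality $exye = exeye$ then follows from the identity $xy = xey$ provided by Lemma~\ref{lem:idem}.

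The crux is (i). Set $M := eSe$. By Remark~\ref{rem:idem}(iii), $M$ is a closed subvariety of $S$, and as the image of $S$ under $x \mapsto exe$ it is irreducible; being closed in the complete variety $S$, it is itself complete. Moreover $M$ is an algebraic monoid with neutral element $e$, so by Theorem~\ref{thm:unit} its unit group $G := G(M)$ is open in $M$. I would then apply Proposition~\ref{prop:alb}: the Albanese morphism $\alpha : M \to A(M)$ is affine, and since $M$ is complete and $A(M)$ separated, $\alpha$ is also proper, hence finite. The restriction $\alpha\vert_G$ factors as the quotient $G \to A(G) = G/G_{\aff}$ followed by the isogeny $\beta : A(G) \to A(M)$ of Proposition~\ref{prop:alb}; its fibers are therefore finite unions of cosets of $G_{\aff}$. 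Finiteness of $\alpha\vert_G$ then forces the connected group $G_{\aff}$ to be trivial, so $G = A(G)$ is an abelian variety. Being complete, its image in the separated variety $M$ is closed; since $G$ is also dense in $M$ (a nonempty open subset of the irreducible $M$), we conclude $G = M$, so $M$ is an abelian variety.

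For (iii), given a homomorphism $\psi : S \to \cG$ to an algebraic group, Remark~\ref{rem:idem}(i) shows that $\psi(e)$ is idempotent in $\cG$, hence equals $1_{\cG}$. Thus $\psi(x) = \psi(e)\psi(x)\psi(e) = \psi(exe) = \pi(\varphi(x))$ with $\pi := \psi\vert_{eSe}$; the map $\pi$ sends $e$ to $1_{\cG}$ and so is a homomorphism of algebraic groups, and it is uniquely determined because $\varphi$ is surjective. The most delicate step is the finiteness-to-triviality passage in (i); the chosen route goes through Proposition~\ref{prop:alb} and avoids any circular appeal to the Mumford--Ramanujam theorem, relying only on affineness of $\alpha$, properness from completeness, and a density argument against openness of $G$ in $M$.
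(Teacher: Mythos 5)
Your proposal is correct, and parts (ii) and (iii) coincide with the paper's argument (both rest on the identity $xy=xey$ from Lemma \ref{lem:idem} and on the fact that $\psi(e)$ must be the idempotent $1_{\cG}$). For part (i), however, you take a genuinely different and much heavier route. The paper stays entirely inside the elementary toolkit: by Proposition \ref{prop:unit} (iii) it suffices to show that $e$ is the \emph{unique} idempotent of $eSe$, and this drops out of Lemma \ref{lem:idem} in one line --- if $f\in E(eSe)$ then $xy=xfy$ for all $x,y$, and taking $x=y=e$ gives $e=efe=f$; then $eSe$ is a complete connected algebraic group, i.e. an abelian variety. You instead import the structure theory of Section \ref{sec:iam}: affineness of the Albanese morphism (Proposition \ref{prop:alb}) plus completeness of $eSe$ makes $\alpha$ finite, which kills $G_{\aff}$, so the unit group is an abelian variety, hence closed as well as open dense in $eSe$. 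Your chain of deductions is sound (in particular $eSe$ is indeed an irreducible complete algebraic monoid, so Theorem \ref{thm:unit} and Proposition \ref{prop:alb} apply, and affine plus proper does give finite), and it has the minor virtue of not needing to locate the idempotents of $eSe$; but it silently pulls in Chevalley's theorem, Theorem \ref{thm:aff} and Proposition \ref{prop:hnaff}, whereas the paper deliberately keeps Subsection \ref{subsec:civ} self-contained, using only the rigidity statement of Lemma \ref{lem:idem}. If you want the lighter argument, note that the uniqueness of the idempotent is exactly the point of Lemma \ref{lem:idem} here.
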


\begin{proof}
\smartqed
(i) By Proposition \ref{prop:unit} (iii), it suffices to show 
that $e$ is the unique idempotent of $eSe$. But if $f \in E(eSe)$, 
then $xy = xfy$ for all $x,y \in S$, by Lemma \ref{lem:idem}.
Taking $x = y = e$ yields $e = efe = f$. 

(ii) By Lemma \ref{lem:idem} again, we have 
$e xy e = e x e y e = (exe)(eye)$ for all $x,y \in S$.

(iii) Let $\cG$ be an algebraic group and let $\psi : S \to \cG$ 
be a homomorphism of algebraic semigroups. Then $\psi(e) = 1$ 
and hence $\psi(x) = \psi(exe)$ for all $x \in S$. Thus, 
$\psi$ factors uniquely as the homomorphism $\varphi$ followed
by some homomorphism of algebraic groups $eSe \to \cG$.
\qed
\end{proof}

\begin{remark}\label{rem:cisg}
By Lemma \ref{lem:abel}, every idempotent $e$ of a complete irreducible 
algebraic semigroup $(S,\mu)$ is minimal. Moreover, by Lemma 
\ref{lem:idem}, the image of the morphism $\mu$ is exactly the kernel 
of $S$; this is a simple algebraic semigroup in view of Proposition 
\ref{prop:SeS}. One may thus deduce part of Theorem \ref{thm:cisg} 
from the structure of simple algebraic semigroups presented in 
Remark \ref{rem:simple} (i). Yet we will provide a direct, 
self-contained proof by adapting the arguments of Proposition 
\ref{prop:SeS}.
\end{remark}

\medskip

\noindent
{\sc Proof of Theorem \ref{thm:cisg}.}

One readily checks that the map $\nu$ (resp. $\mu$) as in the 
statement yields an algebraic semigroup structure on 
$X \times G \times Y$ (resp. on $S$); compare with 
Example \ref{ex:semi} (ii).

Conversely, given $(S,\mu,e)$ as in the statement, consider
\[ X := {_e{}S e}, \quad G := e S e, \quad Y := e S_e \]
with the notation of Remark \ref{rem:idem} (ii).
Then $G$ is an abelian variety by Lemma \ref{lem:abel}.
Let $ \iota: X \times G \times Y \to S$ denote the multiplication 
map: $\iota(x,g,y) = xgy$. Finally, define a map
$\rho : S \to S \times G \times S$ by 
\[ \rho(s) = (s (ese)^{-1}, ese, (ese)^{-1}s). \]
Then $s (ese)^{-1} \in X$, since $es (ese)^{-1} = ese (ese)^{-1} = e$
and $s (ese)^{-1} e = s (ese)^{-1}$. Likewise, $(ese)^{-1} s \in Y$.
So the image of $\rho$ is contained in $X \times G \times Y$.

We claim that $\rho \circ \iota$ is the identity of 
$X \times G \times Y$. Indeed, 
$(\rho \circ \iota)(x,g,y) = \rho(xgy)$. Moreover, $exgye = g$
so that 
\[ \rho(xgy) = (xgyg^{-1}, g, g^{-1}gy). \]
Now $xgyg^{-1} = xgyeg^{-1} = xgeg^{-1} = xe = x$ and likewise,
$g^{-1}xgy = y$. This proves the claim.

By that claim, $\iota$ is a closed immersion, and $\rho$ a retraction
of $\iota$. Also, we have for any $x,x' \in X$, $g,g' \in G$ and
$y,y' \in Y$:
\[ x g y x' g' y' = x g y e x' g' y' = x g e x' g' y' = x g e g' y'. \]
In other words, $\iota$ is a homomorphism of algebraic semigroups, 
where $X \times G \times Y$ is given the semigroup structure $\nu$
as in the statement.

We next claim that $\rho$ is a homomorphism of algebraic semigroups
as well. Indeed, 
\[ \rho(ss') = (ss' (ess'e)^{-1}, ess'e, (ess'e)^{-1}ss')\]
and hence, using Lemma \ref{lem:abel},
\[ \rho(ss') = 
(ss' (es'e)^{-1} (ese)^{-1}, eses'e, (es'e)^{-1} (ese)^{-1} ss'). \]
Moreover,
\[ ss' (es'e)^{-1} (ese)^{-1} = ses'e (es'e)^{-1} (ese)^{-1}
= se (ese)^{-1} = s (ese)^{-1} \] 
by Lemma \ref{lem:idem}, and likewise 
$(es'e)^{-1} (ese)^{-1} ss'= (es'e)^{-1} s'$. Thus, 
\[ \rho(ss') = (s(ese)^{-1}, eses'e, (es'e)^{-1}s')
= \nu(\rho(s), \rho(s')) \]
as required. 

Finally, we claim that $ss' = \iota (\nu(\rho(s), \rho(s')))$.
Indeed, the right-hand side equals
\[ s (ese)^{-1} eses'e (es'e)^{-1}s' = s e s' = s s' \]
in view of Lemma \ref{lem:idem} again.

\begin{remark}\label{rem:cc}
(i) The description of algebraic semigroup laws on a given 
abelian variety $A$ (Proposition \ref{prop:abel}) may of course
be deduced from Theorem \ref{thm:cisg}: with the notation of 
that theorem, the inclusion $\iota$ and retraction $\rho$ yield 
a decomposition $A \cong A_0 \times A_l \times A_r \times B$,
where $A_l := X$, $A_r := Y$, $B := G$ and $A_0$ denotes the
fiber of $\rho$ at $0$. Yet the original proof of Proposition
\ref{prop:abel} is simpler and more direct. 

(ii) As a direct consequence of Theorem \ref{thm:cisg}, 
every algebraic semigroup law on a complete irreducible curve 
is either trivial or the group law of an elliptic curve. 
This yields an alternative proof of part of the classification 
of irreducible algebraic semigroups of dimension $1$
(Theorem \ref{thm:curve}); but in fact, both arguments make 
a similar use of the rigidity lemma.

(iii) As another consequence of Theorem \ref{thm:cisg}, for any
complete irreducible algebraic semigroup $(S,\mu)$, the closed subset 
$E(S)$ of idempotents is an irreducible subsemigroup. Indeed,
choosing $e \in E(S)$, we have with the notation of that theorem 
\[ E(S) = \iota(X \times \{ 1_G \} \times Y) \cong X \times Y. \]
Moreover, $\mu(\iota(x,1_G,y), \iota(x',1_G,y')) = \iota(x,1_G,y')$
with an obvious notation.

(iv) In fact, some of the ingredients of Theorem \ref{thm:cisg} 
only depend of $(S,\mu)$, but not  of the choice of $e \in E(S)$.
Specifically, note first that the projections $\varphi: E(S) \to X$, 
$\psi : E(S) \to Y$ are independent of $e$. Indeed, let $f \in E(S)$ 
and write $f = \iota(x,1_G,y)$. Then 
$f E(S) = \iota(\{ (x,1_G) \} \times Y)$ and hence $f E(S)$ is the fiber
of $\varphi$ at $f$; likewise, the fiber of $\psi$ at $f$ is $E(S) f$.

As seen in Remark \ref{rem:cisg}, $\kernel(S) = S e S$ is isomorphic
to $X \times G \times Y$ via $\iota$.
The resulting projection $\gamma : \kernel(S) \to G$ is the universal
homomorphism to an algebraic group by Lemma \ref{lem:abel}, and 
hence is also independent of $e$; its fiber at $1_G$ is $E(S)$. 
In particular, the algebraic group $G = \cG(S)$ is independent of 
$e$. Note however that $G = e S e$, viewed as a subgroup of $S$,
does depend of the choice of the idempotent $e$. Indeed,
$e S e = \iota(\{ x_o \} \times G \times \{ y_o \})$ with the notation
of Theorem \ref{thm:cisg}, while 
$f S f = \iota(\{ x \} \times G \times \{ y \})$ for $f$ as above.

The map $\rho: S \to X \times G \times Y$ satisfies
$\iota \circ \rho = \rho_e$, where $\rho_e : S \to \kernel(S)$ 
denotes the retraction $s \mapsto s (ese)^{-1} s$ of Proposition 
\ref{prop:SeS}. We check that $\rho_e$ is independent of $e$ 
(this also follows from Proposition \ref{prop:rig} below).
Let $f\in E(S)$, $s \in S$, and write $f = \iota(x,1_G,y)$, 
$\rho(s) = (x_s, g_s, y_s)$. Then $f s f = \iota(x, g_s, y)$ 
and hence $(fsf)^{-1} = \iota(x, g_s^{-1}, y)$. Thus,
\[ \rho_f(s) = s (fsf)^{-1} s = \iota(x_s, g_s, y_s) 
= \rho_e(s). \]

Finally, consider the action of the abelian variety $G$ on 
$\kernel(S) \cong X \times G \times Y$ via translation on the
second factor:
\[ g' \cdot \iota(x,g,y) := \iota(x,gg',y). \]
We check that this action lifts to an action of $G$ on $S$
such that $\rho : S \to \kernel(S)$ is equivariant.
For any $s,s' \in S$, define
\[ s' \cdot s := s (ese)^{-1} s' s. \]
Then we have
\[ s' \cdot s = s (ese)^{-1} \, es'e \, e s 
= s (ese)^{-1} \, ese \, es'e \, (ese)^{-1}s. \]
It follows that $s' \cdot s = es'e \cdot s = es'e \cdot \rho(s)$. 
Moreover,  $s' \cdot S = \kernel(S)$ and the endomorphism 
$s \mapsto s' \cdot s$ of $\kernel(S)$ is just the translation
by $e s' e \in G$ on $G = e S e$; we have
\[ s' \cdot s_1 s_2 = s_1 s' s_2 \] 
for all $s_1, s_2 \in S$. Also, one may check as above that 
$s' \cdot s$ is independent of the choice of $e$.

(v) Theorem \ref{thm:cisg} extends readily to those irreducible 
algebraic semigroups that are defined over a perfect subfield $F$ 
of $k$, and that have an $F$-point; indeed, this implies the existence 
of an idempotent $F$-point by Proposition~\ref{prop:idemF}.

Likewise, the results of Subsections \ref{subsec:av} and
\ref{subsec:ic} extend readily to the setting of perfect fields.
In view of Theorem \ref{thm:curve}, 
every nontrivial algebraic semigroup law $\mu$ 
on an irreducible curve $S$ is commutative; by Proposition 
\ref{prop:idemF} again, it follows that  $S$ has an idempotent
$F$-point whenever $S$ and $\mu$ are defined over~$F$.
\end{remark}

\subsection{Rigidity}
\label{subsec:rig}

In this subsection, we obtain two rigidity results (both possibly known,
but for which we could not locate adequate references) and we apply
them to the study of endomorphisms of complete varieties.

Our first result is a scheme-theoretic version of a classical rigidity 
lemma for irreducible varieties (see \cite[Lem.~1.15]{Debarre}; 
further versions can be found in \cite[Prop.~6.1]{MFK94}).

\begin{lemma}\label{lem:rig}
Let $f: X \to Y$ and $g : X \to Z$ be morphisms of schemes of finite
type over $k$, satisfying the following assumptions:

\begin{enumerate}
\item[{\rm (i)}]{$f$ is proper and the map 
$f^{\#} : \cO_Y \to f_*(\cO_X)$ is an isomorphism.}
\item[{\rm (ii)}]{There exists a $k$-rational point $y_o \in Y$ such that 
$g$ maps the scheme-theoretic fiber $f^{-1}(y_o)$ to a single point.}
\item[{\rm (iii)}]{$f$ has a section, $s$.}
\item[{\rm (iv)}]{$X$ is irreducible.}
\end{enumerate}

Then $g$ factors through $f$; specifically, $g = h \circ f$, 
where $h := g \circ s$. 
\end{lemma}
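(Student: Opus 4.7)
The plan is to follow the classical rigidity pattern: first localize around $y_o$ to land in an affine open of $Z$, then use hypothesis (i) to manufacture a factorization on a neighborhood, and finally propagate the identity $g = h \circ f$ to all of $X$ by irreducibility.

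Set $h := g \circ s$ and $z_o := g(s(y_o))$; by (ii), $g$ sends the entire fiber $f^{-1}(y_o)$ to the single point $z_o$. Choose an affine open $V \subseteq Z$ containing $z_o$. Then $X \setminus g^{-1}(V)$ is closed in $X$ and disjoint from $f^{-1}(y_o)$, and properness of $f$ makes $f(X \setminus g^{-1}(V))$ closed in $Y$ and avoiding $y_o$, so
\[ U := Y \setminus f(X \setminus g^{-1}(V)) \]
is an open neighborhood of $y_o$ with $f^{-1}(U) \subseteq g^{-1}(V)$. Since $V$ is affine, the restriction $g|_{f^{-1}(U)}$ corresponds to a $k$-algebra map $\cO_Z(V) \to \cO_X(f^{-1}(U))$; hypothesis (i) identifies $\cO_X(f^{-1}(U)) = (f_*\cO_X)(U) = \cO_Y(U)$, so this map factors through $\cO_Y(U)$ and yields a morphism $h_U : U \to V$ with $g|_{f^{-1}(U)} = h_U \circ f|_{f^{-1}(U)}$. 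Composing on the right with the section $s|_U$ gives $h_U = g \circ s|_U = h|_U$, so $g$ and $h \circ f$ already coincide on the open subscheme $f^{-1}(U)$.

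To extend the identity, note that $f$ is surjective because it admits the section $s$, hence $Y = f(X)$ is irreducible by (iv); the non-empty open $U \subseteq Y$ is therefore dense, and $f^{-1}(U)$ is a dense open subscheme of the irreducible scheme $X$. Assuming $Z$ is separated (which is automatic in this paper's conventions), the scheme-theoretic equalizer of $g$ and $h \circ f$ is a closed subscheme of $X$ containing $f^{-1}(U)$; irreducibility of $X$ then forces this equalizer to have underlying space $X$, and to equal $X$ as a scheme as soon as $X$ is reduced.

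The main subtlety I anticipate is the scheme-theoretic nature of this last step when $X$ is not reduced: a closed subscheme of $X$ containing the dense open $f^{-1}(U)$ equals $X$ precisely when $f^{-1}(U)$ is schematically dense, and for irreducible $X$ this holds exactly when no associated point of $X$ lies in the complement of $f^{-1}(U)$. In the intended applications $X$ is a variety, so the issue disappears; in the general case it can be dispatched by verifying the factorization locally at each associated point, applying the same affine argument with a different choice of $V$ containing the image of that point, and glueing.
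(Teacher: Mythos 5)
Your proof is correct and shares the paper's overall skeleton (factor $g$ through $f$ over a neighbourhood of $y_o$, then spread out by irreducibility), but the key local step is carried out by a different mechanism. You use the classical tube argument: properness of $f$ turns the closed set $X\setminus g^{-1}(V)$ into a closed subset of $Y$ missing $y_o$, giving an open $U\ni y_o$ with $f^{-1}(U)\subseteq g^{-1}(V)$, after which hypothesis (i) applied over $U$, together with the universal property of the affine target $V$, produces $h_U$ directly. The paper instead base-changes to $Y'=\Spec(\cO_{Y,y_o})$, where the unique closed point forces $g^{-1}(W)=X'$ for any open $W$ containing the image of the fiber, runs the same affine-target argument there, and then spreads the resulting identity from the local ring back to a genuine neighbourhood of $y_o$, using flat base change for $f_*(\cO_X)$. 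Your route is the more elementary of the two (no flat base change, no spreading-out from a localization); both are standard implementations of rigidity.

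On the final step: you are right to flag that for nonreduced $X$ a closed subscheme containing a dense open need not be all of $X$, and it is worth noting that the paper's own conclusion (``$W$ is closed in $X$ and contains $f^{-1}(V)$; since $X$ is irreducible, $W=X$'') passes over exactly the same point. Your proposed repair is essentially correct but, as stated, omits one observation: to rerun the affine argument near another point $y_1=f(\xi)$ you need hypothesis (ii) at $y_1$, i.e., that $g$ collapses the fiber $f^{-1}(y_1)$ to a single point. This does hold, because the set-theoretic identity $g=h\circ f$ --- already established on all of $X$, the agreement locus being closed and containing a dense open --- gives $g(f^{-1}(y_1))=\{h(y_1)\}$. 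With that remark the local factorizations obtained over an open cover of $Y$ glue, and the scheme-theoretic identity holds everywhere, with no need to single out associated points.
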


\begin{proof}
\smartqed
We first treat the case where $y_o$ is the unique closed
point of $Y$. We claim that $X$ is the unique open neighborhood 
of $f^{-1}(y_o)$. Indeed, given such a neighborhood $U$ with
complement $F := X \setminus U$, the image $f(F)$ is closed,
since $f$ is proper. If $F$ is nonempty, then $f(F)$ contains
$y_o$, a contradiction.

Let $z_o\in Z$ be the point $g(f^{-1}(y_o))$, and choose an open
affine neighborhood $W$ of $z_o$ in $Z$. Then $g^{-1}(W) = X$ by the 
claim together with (iii); thus, we may assume that $Z = W$ is affine. 
Then $g$ is uniquely determined by the homomorphism of algebras
$g^{\#}: \cO(Z) \to \cO(X)$.
But the analogous map $f^{\#}: \cO(Y) \to \cO(X)$ is an isomorphism
in view of (iv). Thus, there exists a morphism $h' : Y \to Z$ 
such that $g = h' \circ f$. Then 
$h = g \circ s = h' \circ f \circ s = h'$; this completes the
proof in that case (note that the assumptions (i) and (ii) suffice
to conclude that $g$ factors through $f$).

Next, we treat the general case. The scheme 
$Y' := \Spec(\cO_{Y,y_o})$ has a unique closed point $y'_o$ and 
comes with a flat morphism $\psi : Y' \to Y$, $y'_o \mapsto y_o$. 
Moreover, $X' := X \times _Y Y'$ is equipped with morphisms
$f' : X' \to Y'$, $g' = g \circ p_1: X' \to Z$ that satisfy (i) 
(since taking the direct image commutes with flat base extension, 
see \cite[Prop.~III.9.3]{Hartshorne}) and (ii). Also, note
that $f'$ has a section $s'$ given by the morphism 
$(s \circ \psi) \times \id : Y' \to X \times Y'$.
By the preceding step, we thus have $g' = h' \circ f'$, where
$h' := g' \circ s'$. It follows that there exists an open
neighborhood $V$ of $y_0$ in $Y$ such that $g = h \circ f$ over 
$f^{-1}(V)$.

We now consider the largest subscheme $W$ of $X$ over which 
$g = h \circ f$, i.e., $W$ is the preimage of the diagonal 
in $Z \times Z$ under the morphism $g \times (h \circ f)$.
Then $W$ is closed in $X$ and contains $f^{-1}(V)$. Since
$X$ is irreducible, it follows that $W = X$.
\qed
\end{proof}

\begin{remark}\label{rem:rig}
The assertion of Lemma \ref{lem:rig} still holds under the 
assumptions (ii), (iii) and the following (weaker but more
technical) versions of (i), (iv):

\begin{enumerate}
\item[(i)']{$f$ is proper, and for any irreducible component $Y'$ of $Y$, 
the scheme-theoretic preimage $X':= f^{-1}(Y')$ is an
irreducible component of $X$. Moreover, the map
$f'^{\#}: \cO_{Y'} \to f'_*(\cO_{X'})$ is an isomorphism, where
$f': X' \to Y'$ denotes the restriction of $f$.}
\item[(iv)']{$X$ is connected.}
\end{enumerate}

Indeed, let $Y_o$ be an irreducible component of $Y$ 
containing $y_o$; then $X_o := f^{-1}(Y_o)$
is an irreducible component of $X$. Moreover, the restrictions
$f_o : X_o \to Y_o$, $g_o : X_o \to Z$, and $s_o : Y_o \to X_o$ 
satisfy the assumptions of Lemma \ref{lem:rig}. 
By that lemma, it follows that $g_o = g_o \circ s_o \circ f_o$,
i.e., $g = h \circ f$ on $X_o$. 
In particular, $g$ maps the scheme-theoretic fiber of $f$ at
any point of $Y_o$ to a single point.

Next, let $Y_1$ be an irreducible component of $Y$ intersecting
$Y_o$. Then again, $X_1 := f^{-1}(Y_1)$ is an irreducible
component of $X$; moreover, the restrictions
$f_1 : X_1 \to Y_1$, $g_1 : X_1 \to Z$ and $s_1 : Y_1 \to X_1$ 
satisfy the assumptions of the above lemma, for any point $y_1$ of 
$Y_o \cap Y_1$. Thus, $g = h \circ f$ on $X_o \cup X_1$.           
Iterating this argument completes the proof in view of the
connectedness of~$X$.
\end{remark}

As a first application of the above lemma and remark, 
we present a rigidity result for retractions; further applications 
will be obtained in the next subsection.

\begin{proposition}\label{prop:rig}
Let $X$ be a complete irreducible variety, and $\varphi$ 
a retraction of $X$ to a subvariety $Y$. Let $T$ be a connected 
scheme of finite type over $k$, equipped with a $k$-rational point 
$t_o$, and let $\Phi : X \times T \to X$ be a morphism such that
the morphism $\Phi_{t_o} : X \to X$, $x \mapsto \Phi(x,t_o)$
equals $\varphi$.  

\begin{enumerate}
\item[{\rm (i)}]{There exists a unique morphism 
$\Psi: Y \times T \to X$ such that 
$\Phi(x,t) = \Psi(\varphi(x),t)$ on $X \times T$.}
\item[{\rm (ii)}]{If $\Phi$ is a family of retractions to $Y$
(i.e., $\Phi(y,t) = y$ on $Y \times T$), then $\Phi$ is 
constant (i.e., $\Phi(x,t) = \varphi(x)$ on $X \times T$).}
\end{enumerate}

\end{proposition}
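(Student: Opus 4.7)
The plan is to deduce both parts from Lemma \ref{lem:rig} (in the form of Remark \ref{rem:rig}) applied to the pair of morphisms
\[ f := \varphi \times \id_T : X \times T \longrightarrow Y \times T, \qquad
   g := \Phi : X \times T \longrightarrow X. \]
First I would note that $Y$ is irreducible, as the image of the irreducible variety $X$ under the surjection $\varphi$, and that the inclusion $\iota : Y \hookrightarrow X$ is a section of $\varphi$ (since $\varphi$ is a retraction onto $Y$); thus $\iota \times \id_T$ is a section of $f$. The morphism $\varphi$ is proper since $X$ is complete and $Y$ is separated, so $f$ is proper as well.

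Next I would verify that the natural map $f^{\#} : \cO_{Y \times T} \to f_* \cO_{X \times T}$ is an isomorphism. By Lemma \ref{lem:retract} one already has $\varphi^{\#} : \cO_Y \stackrel{\cong}{\longrightarrow} \varphi_* \cO_X$; flat base change along the flat projection $Y \times T \to Y$ (together with properness of $\varphi$) promotes this to the analogous isomorphism for $f$. If $T$ is irreducible, this puts us directly in the setting of Lemma \ref{lem:rig}; otherwise, writing $T_1, \ldots, T_m$ for the irreducible components of $T$, those of $X \times T$ and $Y \times T$ are respectively $X \times T_j$ and $Y \times T_j$, and $f^{-1}(Y \times T_j) = X \times T_j$ with the same sheaf isomorphism on each component. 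Since $X \times T$ is connected (both $X$ and $T$ being connected), the hypotheses (i'), (iii), (iv') of Remark \ref{rem:rig} are all met.

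The last hypothesis needed is a $k$-rational point of $Y \times T$ at which $g$ collapses the fiber of $f$: for any closed point $y_o \in Y$ we have $f^{-1}(y_o, t_o) = \varphi^{-1}(y_o) \times \{t_o\}$, and $\Phi$ sends this set to $\Phi_{t_o}(\varphi^{-1}(y_o)) = \varphi(\varphi^{-1}(y_o)) = \{y_o\}$. The rigidity lemma then yields the factorization $\Phi = \Psi \circ f$ with
\[ \Psi := \Phi \circ (\iota \times \id_T) : Y \times T \longrightarrow X, \]
proving (i); uniqueness follows at once by specializing the identity $\Phi(x,t) = \Psi(\varphi(x),t)$ to $x \in Y$. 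For (ii), the family-of-retractions hypothesis reads $\Psi(y,t) = \Phi(y,t) = y$, and therefore $\Phi(x,t) = \Psi(\varphi(x),t) = \varphi(x)$ on all of $X \times T$. The one genuinely technical step is the flat base change producing $f^{\#}$ as an isomorphism; once it is in place, the rest is bookkeeping around Lemma \ref{lem:rig}.
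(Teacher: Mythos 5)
Your proof is correct and follows essentially the same route as the paper's: both apply Lemma \ref{lem:rig} in the form of Remark \ref{rem:rig} to $f = \varphi \times \id_T$ and to $\Phi$ (the paper uses $g = \Phi \times \id_T$ with target $X \times T$, a cosmetic difference), checking hypothesis (i)' via $\varphi_*(\cO_X) = \cO_Y$ from Lemma \ref{lem:retract} and hypothesis (ii) at the points $(y,t_o)$. Your extra care with flat base change and with the irreducible components of $X \times T$ only makes explicit what the paper leaves implicit.
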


\begin{proof}
\smartqed
Consider the morphisms 
\[ f : X \times T \longrightarrow Y \times T, \quad 
(x,t) \mapsto (\varphi(x),t),\]
\[ g : X \times T \longrightarrow X \times T, \quad 
(x,t) \mapsto (\Phi(x,t),t).\]
Then the assumption (i)' of Remark \ref{rem:rig} holds, since 
$\varphi_*(\cO_X) = \cO_Y$ in view of Lemma 
\ref{lem:retract}.  Also, the assumption (ii) of Lemma \ref{lem:rig}
holds for any point $(y,t_o)$, where $y \in Y$, and the assumption
(iii) of that lemma holds with $s$ being the inclusion 
of $Y \times T_o$ in $X \times T_o$. Finally, the assumption 
(iv)' of Remark \ref{rem:rig} is satisfied,
since $T$ is connected. By that remark, we thus have 
$g = g \circ s \circ f$ on $X \times T$. Hence there exists 
a unique morphism $\Psi : Y \times T \to X$ such that 
$\Phi(x,t) = \Psi(\varphi(x),t)$ on $X \times T$, namely,
$\Psi(y,t) := \Phi(y,t)$. If $\Phi$ is a family of retractions,
then we get that $\Phi(x,t) = \varphi(x)$ on $X \times T$.  
\qed
\end{proof}

\begin{remark}\label{rem:mod}
The preceding result has a nice interpretation when $X$ is 
projective. Then there exists a quasiprojective $k$-scheme, 
$\End(X)$, which represents the endomorphism functor of $X$, 
i.e., for any noetherian $k$-scheme $T$, the set of $T$-points 
$\End(X)(T)$ is naturally identified with the set of endomorphisms 
of $X \times T$ over $T$; equivalently,
\[ \End(X)(T) = \Hom(X \times T,X). \]
Moreover, each connected component of $\End(X)$ is of finite type.
These results hold, more generally, for the similarly defined
functor $\Hom(X,Y)$ of morphisms from a projective scheme $X$ 
to another projective scheme $Y$ (see \cite[p.~21]{Grothendieck}). 
The composition of morphisms yields a morphism of Hom functors, 
and hence of Hom schemes by Yoneda's lemma. In particular, 
$\End(X)$ is a monoid scheme; its idempotent $k$-points are exactly 
the retractions with source $X$. 

Returning to the setting of an irreducible projective variety $X$
together with a retraction $\varphi: X \to Y$, we may identify 
$\varphi$ with the idempotent endomorphism $e$ of $X$ with image 
$Y$. Now Proposition \ref{prop:rig} yields that the connected 
component of $e$ in $\End(X)$ is isomorphic to the connected 
component of the inclusion $Y \to X$ in $\Hom(Y,X)$, 
by assigning to any
$\phi \in \Hom(Y,X)(T) = \Hom(Y \times T,X)$, the composition 
$\psi \circ (\varphi \times \id) \in \Hom(X \times T,X)$.
Moreover, this isomorphism identifies the connected component
of $e$ in 
\[ \End(E)_e := 
\{ \Phi \in \End(X) ~\vert~ \Phi \circ e = e \} \]
to the (reduced) point $e$.
\end{remark}

Next, we obtain our second rigidity result:

\begin{lemma}\label{lem:end}
Let $X$ be a complete variety, $T$ a connected scheme 
of finite type over $k$, and 
\[ \Phi : X \times T \longrightarrow X \times T,
\quad (x,t) \longmapsto (\varphi(x,t),t) \]
an endomorphism of $X \times T$ over $T$. Assume that
$T$ has a point $t_o$ such that 
$\Phi_{t_o}: X \to X$, $x \mapsto \varphi(x,t_o)$ is an 
automorphism. Then $\Phi$ is an automorphism.
\end{lemma}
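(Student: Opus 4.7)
The plan is as follows. First I would reduce: composing $\Phi$ with the $T$-automorphism $\Phi_{t_o}^{-1} \times \id_T$ of $X \times T$ allows me to assume $\Phi_{t_o} = \id_X$. By treating each irreducible component of $T$ in turn and using connectedness of $T$ to propagate between components sharing a point (once the result is known on a component $T_1$, any adjacent $T_2$ can use a common point $t_1 \in T_1 \cap T_2$ as its new base point), I may further assume $T$ is irreducible; and since $\Phi$ is an isomorphism if and only if its restriction to $T_{\red}$ is, I may also assume $T$ is reduced, hence integral with generic point $\eta$.

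The main step is then to show that $U := \{ t \in T : \Phi_t \text{ is an automorphism of } X \}$ equals $T$. By a standard openness result for a morphism between proper flat families over a base (e.g.\ EGA~IV, 8.11.5), $U$ is open in $T$. Since $t_o \in U$ and $T$ is irreducible, $U$ contains the generic point $\eta$, so $\Phi_\eta$ is an isomorphism of $X_{k(\eta)}$. Hence $\Phi : X \times T \to X \times T$ is a proper birational morphism: proper because both source and target are proper over $T$, and birational because it is an isomorphism on the generic fiber.

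The main obstacle is passing from ``proper birational'' to ``isomorphism,'' since $X \times T$ need not be normal and Zariski's Main Theorem does not apply directly. My plan is to pass to the normalization $\nu : Z \to X \times T$, where $Z$ is normal. The universal property of normalization applied to the dominant morphism $\Phi \circ \nu : Z \to X \times T$ yields a unique lift $\widetilde\Phi : Z \to Z$ satisfying $\nu \circ \widetilde\Phi = \Phi \circ \nu$. This $\widetilde\Phi$ is proper and birational with normal target, so by Zariski's Main Theorem, $\widetilde\Phi$ is an isomorphism.

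Finally, I would descend to $\Phi$: the compatibility $\nu \circ \widetilde\Phi = \Phi \circ \nu$ combined with $\widetilde\Phi$ being an isomorphism forces $\widetilde\Phi$ to permute the fibers of $\nu$ and to preserve the subsheaf $\nu^{\#} \cO_{X \times T} \subseteq \cO_Z$ (as $\widetilde\Phi^* \nu^\# = \nu^\# \Phi^*$). On stalks at $z \in Z$ with $y = \nu(z)$, this gives that the composition $\nu^{\#}_y \circ \Phi^{\#}_y : \cO_{X\times T, \Phi(y)} \to \cO_{Z, z}$ equals $\widetilde\Phi^{\#}_z \circ \nu^{\#}_{\widetilde\Phi(z)}$, an isomorphism followed by the finite map $\nu^\#$, forcing $\Phi^{\#}_y$ to be both injective and surjective. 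Combined with the bijectivity on closed points (from the fiber-permutation of $\widetilde\Phi$) and properness of $\Phi$, this yields that $\Phi$ is an isomorphism of $X \times T$. This last descent step is the most delicate part of the argument.
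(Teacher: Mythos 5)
There is a genuine gap at the central step. You assert that $\widetilde\Phi : Z \to Z$ is ``proper and birational with normal target, so by Zariski's Main Theorem, $\widetilde\Phi$ is an isomorphism.'' That form of ZMT is false: a proper birational morphism onto a normal variety need not be an isomorphism (the blow-up of a smooth surface at a point is proper, birational, with nonsingular target). What ZMT actually gives is that a proper birational morphism with \emph{finite fibers} onto a normal variety is an isomorphism, so your argument is missing a proof that the fibers of $\Phi$ (equivalently of $\widetilde\Phi$) are finite. This is not a formality: it is precisely the step the paper devotes the rigidity lemma to. There, one argues that if some fiber of $\Phi$ were infinite, it would contain a complete irreducible curve $C \subseteq X \times \{t_1\}$ contracted to a point; the rigidity lemma then forces $\varphi\vert_{C \times T}$ to factor through the projection to $T$, so $\Phi_{t_o}$ would also contract $C$, contradicting the hypothesis that $\Phi_{t_o}$ is an automorphism. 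Without this input, ``proper and birational'' is simply too weak a conclusion to proceed from.

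A secondary weakness is the final descent from ``$\widetilde\Phi$ is an isomorphism'' to ``$\Phi$ is an isomorphism.'' Injectivity of $\Phi^{\#}$ does follow from the factorization $\nu^{\#}\circ\Phi^{\#} = \widetilde\Phi^{\#}\circ\nu^{\#}$ with $\nu^{\#}$ injective, but surjectivity does not: that identity only squeezes $\cO_{X\times T}\subseteq \Phi_*(\cO_{X\times T})\subseteq \nu_*(\cO_Z)$ and gives no reason for the first inclusion to be an equality. The paper closes this by iterating: the subsheaves $(\varphi_s^n)_*(\cO_X)$ form an increasing chain inside the coherent sheaf $\eta_*(\cO_{\tilde X})$, hence stabilize, and stabilization together with $\varphi_s$ being \emph{finite and surjective} forces $(\varphi_s)_*(\cO_X)=\cO_X$. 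Note that this argument, too, uses the finiteness of $\Phi$, so the missing fiber-finiteness step infects both halves of your plan. The reductions at the start (to $\Phi_{t_o}=\id$, to $T$ irreducible and reduced) and the openness of the isomorphism locus for proper flat families are fine.
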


\begin{proof}
\smartqed
Note that $\Phi$ is proper, as the composition of the closed 
immersion $X \times T \to X \times X \times T$, 
$(x,t) \mapsto (x,\varphi(x,t),t)$ and of the projection
$X \times X \times T \to X \times T$, $(x,y,t) \mapsto (y,t)$.

We now show that the fibers of $\Phi$ are finite. 
Assuming the contrary, we may find a complete irreducible
curve $C \subseteq X$ and a point $t_1 \in T$ such that 
$\varphi : X \times T \to X$ sends $C \times \{ t_1 \}$ to a point. 
By the rigidity lemma, it follows that the restriction
of $\varphi$ to $C \times T$ factors through the projection
$C \times T \to T$. Taking $t = t_o$, we get a contradiction.
  
The morphism $\Phi$ is finite, since it is proper and its fibers 
are finite; it is also surjective, since for any $t\in T$,
the map $\Phi_t : X \to X$, $x \mapsto \varphi(x,t)$ is a finite 
endomorphism of $X$ and hence is surjective. 

We now claim that $\Phi$ restricts to an automorphism of 
$X \times V$, for some open neighborhood $V$ of $t_o$ in $T$. 
This claim is proved in \cite[Lem.~I.1.10.1]{Kollar}; 
we recall the argument for completeness. Since $\Phi$ is proper, 
the sheaf $\Phi_*(\cO_{X \times T})$ is coherent; it is also flat 
over $T$, since $\Phi$  lifts the identity of $T$. 
Moreover, the map 
$\Phi^{\#} : \cO_{X \times T} \to \Phi_*(\cO_{X \times T})$
induces an isomorphism 
$\Phi^{\#}_{t_o} : \cO_X \to (\Phi_{t_o})_*(\cO_X)$.
In view of a version of Nakayama's lemma 
(see \cite[Prop.~I.7.4.1]{Kollar}), it follows that $\Phi^{\#}$ 
is an isomorphism over a neighborhood of $t_o$. This yields
the claim.

By that claim, the points $t \in T$ such $\Phi_t$ is an 
isomorphism form an open subset of $T$. Since $T$ is connected,
it suffices to show that this subset is closed.
For this, we may assume that $T$ is an irreducible curve; 
replacing $T$ with its normalization, we may further assume that $T$ 
is nonsingular. By shrinking $T$, we may finally assume that it has 
a point $s$ such that $\varphi_t$ is an automorphism for all 
$t \in T \setminus \{ s \}$; we have to show that $\varphi_s$ 
is an automorphism as well.

If $X$ is normal, then so is $X \times T$; moreover, the above
endomorphism $\Phi$ is finite and birational, and hence an 
automorphism. Thus, every $\varphi_t$ is an automorphism.

For an arbitrary $X$, consider the normalization
$\eta : \tilde X \to X$. Then $\Phi$ lifts to an endomorphism
$\tilde\Phi : \tilde X \times T \to \tilde X \times T$, which is
an automorphism by the above step. In particular, 
$\varphi_s$ lifts to an automorphism $\tilde{\varphi}_s$
of $\tilde X$. We have a commutative diagram
\[ \CD
\tilde X @>{\tilde{\varphi}_s}>> \tilde X \\
@V{\eta}VV @V{\eta}VV \\
X @>{\varphi_s}>> X \\
\endCD \]
and hence a commutative diagram of morphisms of sheaves
\[ \CD
\cO_X @>>> (\varphi_s)_*(\cO_X) \\
@VVV @VVV \\
\eta_*(\cO_{\tilde X}) @>>> \eta_*(\tilde{\varphi}_s)_*(\cO_{\tilde X}). \\
\endCD \]
Moreover, the bottom horizontal arrow in the latter diagram is 
the identity (as $(\tilde \varphi_s)_*(\cO_{\tilde X}) = \cO_{\tilde X}$), 
and the other maps are all injective. Thus,
$\cO_X \subseteq (\varphi_s)_*(\cO_X) \subseteq \eta_*(\cO_{\tilde X})$,
and hence the iterates $(\varphi^n_s)_*(\cO_X)$ form an increasing
sequence of subsheaves of $\eta_*(\cO_{\tilde X})$. As the latter
sheaf is coherent, we get 
\[ (\varphi^n_s)_*(\cO_X) = (\varphi^{n+1}_s)_*(\cO_X)
\quad (n \gg 0). \]
Since $\varphi_s$ is finite and surjective, it follows that 
$\cO_X = (\varphi_s)_*(\cO_X)$ and hence that $\varphi_s$
is an isomorphism.
\qed
\end{proof}

A noteworthy consequence of Lemma \ref{lem:end} is the following:

\begin{corollary}\label{cor:abel}
{\rm (i)} Let $M$ be a complete algebraic monoid. Then $G(M)$ is 
a union of connected components of $M$. In particular, if $M$
is connected then it is an abelian variety.

{\rm (ii)} Let $S$ be a complete algebraic semigroup and let $e,f$
be distinct idempotents such that $e \leq f$. Then $e$ and $f$ belong
to distinct connected components of $S$. In particular, if $S$ is
connected then every idempotent is minimal.
\end{corollary}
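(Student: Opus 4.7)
For part~(i), the strategy is to apply Lemma~\ref{lem:end} to the morphism
\[ \Phi : M \times M^o \longrightarrow M \times M^o,
\quad (x,t) \longmapsto (xt,t), \]
viewed as an endomorphism over $T := M^o$. Since $\Phi_1 = \mathrm{id}_M$ is an automorphism, $M$ is complete, and $M^o$ is connected (in fact irreducible, by Proposition~\ref{prop:neu}), the lemma yields that $\Phi$ is an automorphism. In particular, for every $t \in M^o$, right multiplication by $t$ is a bijection of $M$, so $t$ admits a left inverse; Proposition~\ref{prop:unit}(i) then gives $M^o \subseteq G(M)$.

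Once this inclusion is established, every point of $M^o$ is a nonsingular point of $M$ by Theorem~\ref{thm:unit}, so $M^o$ cannot meet any other irreducible component and is therefore a connected component of $M$. For an arbitrary connected component $C$ meeting $G(M)$, say at $g$, left multiplication by $g^{-1}$ is an automorphism of $M$ as a variety which carries $C$ onto the connected component of $1$, namely $M^o$; hence $C = g M^o \subseteq G(M)$, showing that $G(M)$ is a union of connected components. When $M$ is connected, this forces $M = M^o = G(M)$, a complete connected algebraic group, hence an abelian variety.

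For part~(ii), I would reduce to~(i) by passing to the closed (hence complete) submonoid $fSf$ of $S$, which has $f$ as neutral element. The morphism $\pi : S \to fSf$, $x \mapsto fxf$ satisfies $\pi(e) = fef = e$ (using $e \leq f$) and $\pi(f) = f$; so if $e$ and $f$ lay in the same connected component of $S$, continuity of $\pi$ would place them in the same connected component of $fSf$. But~(i) applied to $fSf$ puts the connected component of $f$ inside the group $G(fSf)$, forcing the idempotent $e$ to coincide with the neutral element $f$, a contradiction. When $S$ is connected, it follows that no idempotent $f \in E(S)$ admits an $e \leq f$ in $E(S)$ with $e \neq f$, so every idempotent is minimal.

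I expect the main obstacle to be seeing that Lemma~\ref{lem:end} can be applied with $T = M^o$ sitting inside $X = M$ itself (a slightly self-referential but perfectly valid use of the lemma), and then bridging between the irreducible component $M^o$ and the connected-components language of the conclusion; once one uses the nonsingularity of $G(M)$ to identify $M^o$ with the connected component of $1$, everything else is a formal consequence of Proposition~\ref{prop:unit}(i) and, in~(ii), of the retraction $\pi$.
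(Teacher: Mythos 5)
Your proof is correct and follows essentially the same route as the paper: Lemma~\ref{lem:end} applied to the family of right multiplications over a connected parameter space to show units form open-and-closed pieces, then reduction of (ii) to (i) via the complete submonoid $fSf$, in which the idempotent $e$ must equal the neutral element. The only (harmless) variations are that you take $T = M^o$ and propagate by translation where the paper applies the lemma directly to any component containing a unit, and that you invoke Proposition~\ref{prop:unit}(i) to pass from a one-sided inverse to invertibility where the paper applies the lemma a second time to left multiplication.
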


\begin{proof}
\smartqed
(i) Let $T$ be a connected component of $M$ containing a unit $t_o$. 
Applying Lemma \ref{lem:end} to the morphism
$M \times T \to M \times T$, $(x,t) \mapsto (xt, t)$,
we see that the map $x \mapsto xt$ 
is an isomorphism for any $t \in T$. Likewise, the map 
$x \mapsto tx$ is an isomorphism as well. Thus, 
$t$ has a left and a right inverse in $M$, and hence is a unit.
So $T$ is contained in $G(M)$.

Alternatively, we may deduce the statement from Theorem 
\ref{thm:curve}: indeed, $G(M)$ contains no subgroup
isomorphic to $\bG_a$ or $\bG_m$, since the latter do not
occur as unit groups of complete irreducible monoids.
By Chevalley's structure theorem, it follows that the
reduced neutral component of $G(M)$ is an abelian variety.
Thus, $G(M)$ is complete, and hence closed in $M$.
But $G(M)$ is open in $M$, hence the assertion follows. 

(ii)  Assume that $e$ and $f$ belong to the same connected
component $T$ of $S$. Then $T$ is a closed subsemigroup, 
and hence we may assume that $S$ is connected. Now 
$fSf$ is a complete connected algebraic monoid, and hence 
an abelian variety. It follows that $e = f$, a contradiction.
\qed
\end{proof}

\begin{remark}\label{rem:mum}
(i) Like for Proposition \ref{prop:rig}, the statement of 
Lemma \ref{lem:end} has a nice interpretation 
when $X$ is projective. Then its functor of automorphisms
is represented by an open subscheme $\Aut(X)$ of $\End(X)$  
(see \cite[p.~21]{Grothendieck}); in fact, $\Aut(X)$ is 
the unit group scheme of the monoid scheme $\End(X)$. 
Now Lemma \ref{lem:end} implies that $\Aut(X)$ 
is also closed in $\End(X)$. In other words, $\Aut(X)$ 
is a union of connected components of $\End(X)$.

For an arbitrary complete variety $X$, the automorphism  
functor defined as above is still represented by a group
scheme $\Aut(X)$; moreover, each connected component of $\Aut(X)$
is of finite type (see \cite[Thm.~3.7]{Matsumura-Oort} for these
results). We do not know if $\End(X)$ is representable in this 
generality; yet the above interpretation of Lemma
\ref{lem:end} still makes sense in terms of functors.

(ii) Let $X$ and $T$ be complete varieties, where $T$
is irreducible, and let $\mu : X \times T \to X$ be
a morphism such that $\mu(x,t_o) = x$
for some $t_o \in T$ and all $x \in X$. Then by 
Lemma \ref{lem:end}, the map $\mu_t: x \mapsto \mu(x,t)$ 
is an automorphism for any $t \in T$. This yields a morphism
of schemes
\[ \varphi : T \longrightarrow \Aut(X), \quad t \longmapsto \mu_t \]
such that $\varphi(t_o)$ is the identity. Hence $\varphi$ sends $T$ 
to the neutral component $\Aut^o(X)$. Consider the subgroup $G$ of
$\Aut^o(X)$ generated by the image of $T$; then $G$ is closed 
and connected by \cite[Prop.~II.5.4.6]{Demazure-Gabriel}, 
and hence is an abelian variety. In loose words, the morphism
$\mu$ arises from an action of an abelian variety on $X$.

(iii) Let $X$ be a complete irreducible variety, and
$\mu : X \times X \to X$ a morphism such that 
$\mu(x,x_o) = \mu(x_o,x) = x$ for some $x_o \in X$ and all $x \in X$. 
Then the above morphism $\varphi : X \to \Aut^o(X)$ 
satisfies $\varphi(x)(x_o) = x$, and hence is a closed immersion;
we thus identify $X$ to its image in $\Aut^o(X)$. As seen above,
$X$ generates an abelian subvariety $G$ of $\Aut^o(X)$. The 
natural action of $G$ on $X$ is transitive, since the orbit
$G x_o$ contains $X x_o = X$. Thus, $X$ itself is an abelian variety 
on which $G$ acts by translations. Moreover, since $G x_o = X x_o$, 
we have $G = X G_{x_o}$, where $G_{x_o}$ denotes the isotropy subgroup 
scheme of $x_o$. As $G$ is commutative and acts faithfully and 
transitively on $X$, this isotropy subgroup scheme is trivial, 
i.e., $G = X$. In conclusion, $X$ is an abelian variety with group 
law $\mu$ and neutral element $x_o$. This result is due to Mumford
and Ramanujam, see \cite[Chap.~II, \S 4, Appendix]{Mumford}.
\end{remark}

\subsection{Families of semigroup laws}
\label{subsec:fam}

\begin{definition}\label{def:fam}
Let $S$ be a variety, and $T$ a $k$-scheme. 
A \emph{family of semigroup laws on $S$ parameterized by $T$}
is a morphism $\mu: S \times S \times T \to S$
such that the associativity condition
\[ \mu(s, \mu(s', s'', t), t ) = \mu( \mu(s, s', t), s'', t) \]
holds on $S \times S \times S \times T$. 
\end{definition}

Such a family yields a structure of semigroup scheme on 
$S \times T$ over $T$: to any scheme $T'$ equipped with a
morphism $\theta : T' \to T$, one associates the (abstract)
semigroup consisting of all morphisms $\sigma : T' \to S$,
 equipped with the law $\mu_{\theta}$ defined by
\[ \mu_{\theta}(\sigma,\sigma')= 
\mu(\sigma,\sigma',\theta). \]
In particular, any $k$-rational point $t_o$ of $T$
yields an algebraic semigroup structure on $S$, 
\[ \mu_{t_o} : S \times S \longrightarrow  S, 
\quad (s, s') \longmapsto \mu(s, s', t_o). \]
This sets up a bijective correspondence between families 
of semigroup laws on $S$ parameterized by $T$, and structures
of $T$-semigroup scheme on $S \times T$.

For example, every algebraic semigroup law
$S \times S \to S$, $(s,s') \mapsto ss'$ defines a family of
semigroup laws on $S \times S$ parameterized by $S$, via
\[ \mu : S \times S \times S \longrightarrow S, \quad
(s,s',t) \longmapsto s t s'. \]
If $S$ is irreducible and complete, and $e \in S$ is idempotent,
then $\mu_e(s,s') = s s'$ in view of Lemma \ref{lem:idem}.
More generally, for any $t \in S$, we have
$\mu_t(s,s') = t \cdot ss'$,
with the notation of  Remark \ref{rem:cc} (iv). In other words,
the family $\mu$ arises from the action of the abelian variety
$G = eSe$ on $S$, defined in that remark. 

We now generalize this construction to obtain all families of 
semigroup structures on a complete irreducible variety, 
under a mild assumption on the parameter scheme.

\begin{theorem}\label{thm:fam}
Let $S$ be a complete irreducible variety, $T$ a connected 
scheme of finite type over $k$, and $\mu : S \times S \times T \to S$ 
a family of semigroup laws. Choose a $k$-point $t_o \in T$ 
and denote by $\kernel(S)$  the kernel of $(S,\mu_{t_o})$, 
by $\rho : S \to \kernel(S)$ the associated retraction, 
and by $G$ the associated abelian variety; recall that 
$G$ acts on $\kernel(S)$ by translations.

Then there exist unique morphisms 
$\varphi : \kernel(S) \times T \to S$ and $\gamma: T \to G$
such that 
\[ \mu(s,s',t) = \varphi(\mu_{t_o}(s,s'),t) \] 
on $S \times S' \times T$, and that the composition 
$\rho \circ \varphi :  \kernel(S) \times T \to \kernel(S)$
is the translation $(s,t) \mapsto \gamma(t) \cdot s$.

Conversely, given $\varphi: \kernel(S) \times T \to S$ 
such that there exists $\gamma : T \to G$ satisfying the 
preceding condition, the assignment 
$(s,s',t) \mapsto \varphi( \mu_{t_o}(s,s'),t)$ 
yields an algebraic semigroup law over $T$. Moreover,
$\varphi(s,t_o) = s$ on $\kernel(S)$, and 
$\gamma(t_o) = 1_G$. 
\end{theorem}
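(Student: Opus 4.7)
The plan is to first factor $\mu$ through $\mu_{t_o} \times \id_T$ via the rigidity lemma, obtaining $\varphi$, then analyze the endomorphism family $\rho \circ \varphi$ of $\kernel(S)$ to extract $\gamma$, and finally verify the converse by direct computation. Uniqueness follows from the surjectivity of $\mu_{t_o}$ onto $\kernel(S)$ (see Remark~\ref{rem:cisg}): the factorization determines $\varphi$, and $\rho \circ \varphi$ then determines $\gamma$. Setting $t = t_o$ in $\mu(s, s', t) = \varphi(\mu_{t_o}(s, s'), t)$ yields $\varphi(z, t_o) = z$ for every $z \in \kernel(S)$; consequently $\gamma(t_o) = 1_G$.

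For the factorization, I apply Lemma~\ref{lem:rig} in the extended form of Remark~\ref{rem:rig} to $f : S \times S \times T \to \kernel(S) \times T$, $(s, s', t) \mapsto (\mu_{t_o}(s, s'), t)$, and $g := \mu$. A section of $f$ is provided by the Rees decomposition $\kernel(S) \cong X \times G \times Y$ from Theorem~\ref{thm:cisg} via the closed immersion $\iota$: set $\sigma(\iota(a, g, d), t) := (\iota(a, g, y_o), \iota(x_o, 1, d), t)$. The fibre $f^{-1}(e, t_o)$ is collapsed by $g$ to the point $e$ since $g = \mu_{t_o}$ at $t_o$. The irreducible components of $S \times S \times T$ are $S \times S \times T_i$, one for each irreducible component $T_i$ of $T$, and each maps to $\kernel(S) \times T_i$. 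The cohomological condition reduces by the product structure in $T$ to showing $(\mu_{t_o})_* \cO_{S \times S} = \cO_{\kernel(S)}$; for this I observe that every fibre $\mu_{t_o}^{-1}(z)$ is the preimage under the retraction $\rho \times \rho : S \times S \to \kernel(S) \times \kernel(S)$ of the closed irreducible complete subvariety $V_z := \{(z_1, z_2) \in \kernel(S)^2 : z_1 z_2 = z\} \cong G \times X \times Y$, so Lemma~\ref{lem:retract} yields $H^0(\mu_{t_o}^{-1}(z), \cO) = H^0(V_z, \cO) = k$ for every $z$. Together with the section $\sigma$ this gives the required isomorphism, and Remark~\ref{rem:rig} produces a unique $\varphi : \kernel(S) \times T \to S$ with $\mu = \varphi \circ (\mu_{t_o} \times \id_T)$.

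For the second step, define $\Phi(z, t) := \rho(\varphi(z, t))$; then $\Phi_{t_o} = \id_{\kernel(S)}$, and Lemma~\ref{lem:end} applied to $(z, t) \mapsto (\Phi(z, t), t)$ shows that each $\Phi_t$ is an automorphism of $\kernel(S)$. Writing $\Phi_t(\iota(a, g, d)) = \iota(A, B, C)$ with $A, B, C$ depending on $(a, g, d, t)$, and unwinding the associativity $\mu_t(\mu_t(z_1, z_2), z_3) = \mu_t(z_1, \mu_t(z_2, z_3))$ for $z_i \in \kernel(S)$ via the Rees multiplication $z \cdot z' = \iota(x_z, g_z g_{z'}, y_{z'})$, then applying $\rho$ and using injectivity of $\Phi_t$, one deduces $A(z, t) = x_z$, $C(z, t) = y_z$, and $B(z, t) = \gamma(t) g_z$, where $\gamma(t) \in G$ is the $G$-component of $\Phi_t(e)$. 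Hence $\Phi_t$ is translation by $\gamma(t)$ on the $G$-factor, and $\gamma : T \to G$ is a morphism by construction. For the converse, given $\varphi$ and $\gamma$ with the stated properties, the associativity of $\tilde\mu(s, s', t) := \varphi(\mu_{t_o}(s, s'), t)$ in $(s, s')$ follows from associativity of $\mu_{t_o}$ together with the translation identity $\rho \circ \varphi(z, t) = \gamma(t) \cdot z$ and the formula $s' \cdot (s_1 s_2) = s_1 s' s_2$ from Remark~\ref{rem:cc}(iv). The main obstacle is verifying condition~(i)' of the rigidity lemma: one needs the fibrewise identification of $\mu_{t_o}^{-1}(z)$ as a retract onto $V_z$ in order to conclude the cohomological vanishing, and this computation, which essentially lifts the simple semigroup structure to a statement about direct images of the structure sheaf, is the heart of the proof.
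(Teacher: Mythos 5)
Your proposal is correct and follows the same overall architecture as the paper's proof: factor $\mu$ through $\mu_{t_o} \times \id_T$ via Lemma \ref{lem:rig} and Remark \ref{rem:rig}, show that $\rho \circ \varphi$ is a family of automorphisms of $\kernel(S)$ via Lemma \ref{lem:end}, pin down its form using associativity, and check the converse directly. Two points of comparison. For condition (i)' of the rigidity lemma, the paper simply exhibits a section of $\mu_{t_o}$ (namely $s \mapsto (s, s(ese)^{-2}s)$, equivalent to yours) and invokes Lemma \ref{lem:retract}, which already yields that $\mu_{t_o}^{\#} : \cO_{\kernel(S)} \to (\mu_{t_o})_*(\cO_{S \times S})$ is an isomorphism; your fibrewise computation of $H^0(\mu_{t_o}^{-1}(z), \cO)$ is therefore redundant, and on its own it would not suffice (fibrewise $H^0 = k$ does not give the direct-image statement over a possibly non-normal base without the Stein-factorization-plus-section argument that Lemma \ref{lem:retract} already encapsulates). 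So this step is not ``the heart of the proof'' as you suggest --- the section is. Where you genuinely diverge, to good effect, is in identifying $\Phi_t = \rho \circ \varphi(\cdot, t)$: the paper first decomposes $\Aut^o(X \times G \times Y) \cong \Aut^o(X) \times \Aut^o(G) \times \Aut^o(Y)$ (citing \cite{BSU}) to write $\Phi_t$ in product form with a translation on the $G$-factor, and only then uses associativity to force the $X$- and $Y$-components to be the identity; you instead cancel the outer $\Phi_t$ from the identity $\Phi_t(z_1 \Phi_t(z_2 z_3)) = \Phi_t(\Phi_t(z_1 z_2) z_3)$ using injectivity, obtaining $z_1 \Phi_t(z_2 z_3) = \Phi_t(z_1 z_2) z_3$, from which the Rees product lets one read off $A = x_z$, $C = y_z$ and $B = \gamma(t) g_z$ componentwise by varying $z_1, z_2, z_3$. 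This is a real simplification, as it avoids the theorem on automorphism groups of products of complete varieties entirely. The uniqueness argument, the converse, and the normalizations $\varphi(s, t_o) = s$ and $\gamma(t_o) = 1_G$ are handled as in the paper.
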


\begin{proof}
\smartqed
Denote for simplicity $\mu_{t_o}(s,s')$ by $ss'$.
We begin by showing that there exists a unique morphism
$\varphi : \kernel(S) \times T \to S$ such that
$\mu(s,s',t) = \varphi(ss',t)$. For this, we apply Lemma
\ref{lem:rig} and the subsequent Remark \ref{rem:rig} to 
the morphisms 
\[ \mu_{t_o} \times \id: 
S \times S \times T \to \kernel(S) \times T, \quad
\mu \times \id : S \times S \times T \to S \times T. \]
To check the  corresponding assumptions, note first that 
$\mu_{t_o}$ has a section 
\[ \sigma : \kernel(S) \longrightarrow S \times S, 
\quad s \longmapsto (s,s (e s e)^{-2} s),\]
where $e$ denotes a fixed idempotent of $(S,\mu_{t_o})$. 
(Indeed, let $\iota : X \times G \times Y \to S$ be the associated
closed immersion with image $\kernel(S)$. Then 
\[ \sigma (\iota (x,g,y)) = 
(\iota (x,g,y), \iota (x,1_G,y)) \]
as an easy consequence of Theorem \ref{thm:cisg}. Thus,
$\mu_{t_o} (\sigma ( \iota (x,g,y))) = \iota (x,g,y)$.)
By Lemma \ref{lem:retract}, it follows that the map
$\mu_{t_o}^{\#} : \cO_{\kernel(S)} \to (\mu_{t_o})_* (\cO_{S \times S})$ 
is an isomorphism.
Thus, $\mu_{t_o}$ satisfies the assumption (i)' of Remark \ref{rem:rig}; 
hence so does $\mu_{t_o} \times \id$. Also, the assumption (ii) of
Lemma \ref{lem:rig} holds for any point $(s,t_o)$ with 
$s \in \kernel(S)$, and the assumption (iii) of that lemma
holds as well, since $\sigma \times \id$ is a section of 
$\mu_{t_o} \times \id$. Finally, $S \times S \times T$ is connected, 
i.e., the assumption (iv)' of Remark \ref{rem:rig} is satisfied. 
Hence that remark yields the desired morphism $\varphi$.

In particular, $ss' = \mu(s,s',t_o) = \varphi(ss',t_o)$
for all $s,s' \in S$. Since the image of $\mu_{t_o}$ equals
$\kernel(S)$, it follows that $\varphi(s,t_o) = s$
for all $s \in \kernel(S)$. 

Next, consider the morphism
\[ \Psi := (\rho \circ \varphi) \times \id: 
\kernel(S,\mu_{t_o}) \times T \longrightarrow 
\kernel(S,\mu_{t_o}) \times T. \]
Then $\Psi_{t_o}$ is the identity by the preceding step; 
thus, $\Psi$ is an automorphism in view of Lemma
\ref{lem:end}. In other words, $\Psi$ arises from a morphism 
\[ \pi : T \longrightarrow \Aut(\kernel(S)), \quad 
t_o \longmapsto \id. \]
Since $T$ is connected, the image of $\pi$ is contained in
$\Aut^o(\kernel(S))$. We identify $\kernel(S)$ with
$X \times G \times Y$ via $\iota$. Then the natural map
\[ \Aut^o(X) \times \Aut^o(G) \times \Aut^o(Y) \longrightarrow
\Aut^o(\kernel(S)) \] 
is an isomorphism by \cite[Cor.~4.2.7]{BSU}. Moreover, 
$\Aut^o(G) \cong G$ via the action of $G$ on itself by translations,
see e.g. [loc.~cit.,Prop.~4.3.2]. Thus, we have
\[ \Psi(x,g,y,t) = (\alpha(x,t), g + \gamma(t), \beta(y,t),t)
 \]
for uniquely determined morphisms $\alpha : X \times T \to X$,
$\beta : Y \times T \to Y$ and $\gamma : T \to G$ such that
$\alpha \times \id$ is an automorphism of $X \times T$ over $T$,
and likewise for $\beta \times \id$.

We now use the assumption that $\mu$ is associative. This is 
equivalent to the condition that
\[ \varphi(s \varphi(s's'',t), t) = \varphi(\varphi(ss',t)s'',t) \]
on $S \times S \times S \times T$. Let $\psi := \rho \circ \varphi$,
then
\[ \psi(s \psi(s's'',t), t) = \psi(\psi(ss',t) s'',t) \]
on $\kernel(S) \times \kernel(S) \times \kernel(S) \times T$, 
since $ss' = \rho(s) \rho(s') = \rho(ss')$ on $S \times S$.
In view of the equalities 
$\psi(x,g,y) = (\alpha(x,t), g + \gamma(t), \beta(y,t))$
and $(x,g,y)(x',g',y') = (x,gg',y')$, the above associativity condition
for $\psi$ yields that $\alpha(x,t) = \alpha(\alpha(x,t),t)$
on $X \times T$, and $\beta(y,t) = \beta(\beta(y,t),t)$
on $Y \times T$. As $\alpha \times \id$ and $\beta \times \id$
are automorphisms, it follows that $\alpha(x,t) = x$ and
$\beta(y,t) = y$. Thus, 
\[ \psi(x,g,y,t) = (x, g + \gamma(t), y),\]
that is, $\rho \circ \varphi$ is the translation by $\gamma$.

For the converse, let $\varphi$, $\gamma$ be as in the statement. 
Then the morphism 
\[ \mu : S \times S \times T \longrightarrow S \times T, \quad
(s,s',t) \longmapsto \varphi(ss',t) \]
satisfies the associativity condition, since 
\[ \varphi(s \varphi(s's'',t), t) =
\varphi(\rho(s) \rho(\varphi(s's'',t)),t) =
\varphi(\gamma(t) \cdot \rho(s) \rho(s') \rho(s''), t) \]
and the right-hand side is clearly associative. Moreover,
as already checked, $\varphi(s,t_o) = s$ on $\kernel(S)$;
it follows that 
\[ \gamma(t_o) \cdot s = (\rho \circ \varphi)(s,t_o)
= \rho(s) = s \]
for all $s \in \kernel(S)$. Thus, $\gamma(t_o) = 1_G$.
\qed
\end{proof}

\begin{remark}\label{rem:fam}
(i) With the notation and assumptions of Theorem \ref{thm:fam},
one can easily obtain further results on the semigroup scheme structure 
of $S \times T$ over $T$ that corresponds to $\mu$, along the lines
of Theorem \ref{thm:cisg} and of Remark \ref{rem:cc}. For example,
one may check that the idempotent sections of the projection
$S \times T \to T$ are exactly the morphisms 
\[ T \longrightarrow S, \quad 
t \longmapsto \varphi(\gamma(t)^{-2} \cdot \varepsilon(t)), \] 
where $\varepsilon : T \to E(S,\mu_{t_o})$ is a morphism. In particular,
any such semigroup scheme has an idempotent section.

(ii) Consider the functor of composition laws on a variety $S$,
i.e., the contravariant functor from schemes to sets given by 
$T \mapsto \Hom(S \times S \times T,S)$; then the families of
algebraic semigroup laws yield a closed subfunctor
(defined by the associativity condition). When $S$ is projective, 
the former functor is represented by a quasiprojective 
$k$-scheme, 
\[ \CL(S) := \Hom(S \times S,S); \]
moreover, each connected component of $\CL(S)$ is of finite type 
over $k$ (as mentioned in Remark \ref{rem:mod}). 
Thus, the latter subfunctor is represented by a closed subscheme, 
\[ \SL(S) \subseteq \CL(S). \]
In particular, $\SL(S)$ is quasi-projective, and its connnected 
components are of finite type.

By Theorem \ref{thm:fam}, the connected component of 
$\mu_{t_o}$ in $\SL(S)$ is identified with the closed subscheme 
of $\Hom(\kernel(S),S) \times G$ consisting of those pairs
$(\varphi,\gamma)$ such that $\rho \circ \varphi$ is the 
translation by $\gamma$. Via the assignment
$(\varphi,\gamma) \mapsto (\gamma^{-1}\cdot \varphi,\gamma)$
(where $\gamma^{-1}\cdot \varphi$ is defined as in Remark 
\ref{rem:cc} (iv)), the above component of $\SL(S)$ 
is identified with the closed  subscheme of $\Hom(\kernel(S),S) \times G$ 
consisting of those pairs $(\sigma,\gamma)$ such that 
$\rho \circ \sigma = \id$,  that is, $\sigma$ is a section of $\rho$. 
This identifies the universal semigroup law on the above component, 
with the morphism 
\[ (s,s') \mapsto \gamma\cdot \sigma(\mu_{t_o}(s,s') ). \]
Note that the scheme of sections of $\rho$ is isomorphic to
an open subscheme of the Hilbert scheme, $\Hilb(S)$, by
assigning to every section its image (see \cite[p.~21]{Grothendieck}).  
This open subscheme is generally nonreduced, as shown by a
classical example where $S$ is a ruled surface over an elliptic
curve $C$. Specifically, $S$ is  obtained as the projective completion of 
a nontrivial principal $\bG_a$-bundle over $C$, 
and $\rho : S \to C$ is the ruling; then the section at infinity of $\rho$
yields a fat point of $\Hilb(S)$, 
as follows from obstruction theory (see e.g. \cite[Sec.~I.2]{Kollar}).
As a consequence, the scheme $\SL(S)$ is generally nonreduced as well.

(iii) The families of semigroup laws on further classes of varieties are worth
investigating. Following the approach of deformation theory, one may
consider those families of semigroup laws $\mu$ on a prescribed variety
$S$ that are parameterized by the spectrum of a local artinian $k$-algebra 
$R$ with residue field $k$, and that have a prescribed law $\mu_{t_o}$ 
at the closed point. Then the first-order deformations (i.e., those 
parameterized by $\Spec(k[t]/(t^2)$) form a $k$-vector space 
which may well be infinite-dimensional; this already happens when 
$S$ is the affine line, and $\mu_{t_o}$ the multiplication.
\end{remark}

\begin{acknowledgement} 
This article is an expanded and reorganized version of notes 
for lectures at ``The Workshop on Algebraic Monoids, 
Group Embeddings and Algebraic Combinatorics'' 
(Fields Institute, July 3-6, 2012). 
I thank the organizers of this workshop for giving me the 
opportunity to present this material, and all participants 
for fruitful contacts. Special thanks are due to Zhenheng Li 
for his careful reading of the article and his valuable comments.
\end{acknowledgement}

\end{document}